\newtheorem{theorem}{Theorem}
\newtheorem{lemma}{Lemma}
\newtheorem{proposition}{Proposition}
\newtheorem{definition}{Definition}
	\newtheorem{corollary}{Corollary}
\newtheorem{claim}{Claim}
 \newtheorem{thm}{Theorem}[section]
 \newtheorem{cor}[thm]{Corollary}
 \theoremstyle{definition}
 \theoremstyle{remark}
 \numberwithin{equation}{section}
\newcommand{\vertiii}[1]{{\left\vert\kern-0.25ex\left\vert\kern-0.25ex\left\vert #1
    \right\vert\kern-0.25ex\right\vert\kern-0.25ex\right\vert}}
\newcommand{\f}[2]{\frac{#1}{#2}}
\newcommand{\cl}{{\mathcal L}}
\newcommand{\n}[2]{{\left\| #1 \right\|}_{#2}}
\newcommand{\al}{\alpha}
\newcommand{\be}{\beta}
\newcommand{\ve}{\varepsilon}
\newcommand{\ka}{\kappa}
\newcommand{\la}{\lambda}
\newcommand{\si}{\sigma}
\newcommand{\vp}{\varphi}
\newcommand{\rone}{\mathbb R}
\newcommand{\lan}[1]{\left\langle #1\right\rangle}
\newcommand{\dpr}[2]{\langle #1,#2 \rangle}
\newcommand{\ct}{\mathbb T}
\newcommand{\cs}{\mathcal S}
\newcommand{\cf}{\mathcal F}
\newcommand{\cd}{\mathcal D}
\newcommand{\ch}{\mathcal H}
\newcommand{\p}{\partial}
\newcommand{\wt}{\widetilde}
\newcommand{\beq}{\begin{equation}}
\newcommand{\eeq}{\end{equation}}
\newcommand{\beqna}{\begin{eqnarray*}}
\newcommand{\eeqna}{\end{eqnarray*}}
\newcommand{\beqn}{\begin{equation*}}
\newcommand{\eeqn}{\end{equation*}}
\newcommand{\bp}{\begin{proof}}
\newcommand{\ep}{\end{proof}}
\newcommand{\bprop}{\begin{proposition}}
\newcommand{\eprop}{\end{proposition}}
\newcommand{\bt}{{\mathbf T}}
\newcommand{\et}{\end{theorem}}
\newcommand{\bex}{\begin{Example}}
\newcommand{\eex}{\end{Example}}
\newcommand{\bc}{\begin{corollary}}
\newcommand{\ec}{\end{corollary}}
\newcommand{\bcl}{\begin{claim}}
\newcommand{\ecl}{\end{claim}}
\newcommand{\bl}{\begin{lemma}}
\newcommand{\el}{\end{lemma}}
\newcommand{\cj}{{\mathcal J}}
\begin{document}

\title
[Dynamics of the Drinfeld-Sokolov-Wilson system]{Dynamics of the Drinfeld-Sokolov-Wilson system: well-posedness and (in)stability of the traveling waves}

 \author[Ognyan Christov]{\sc Ognyan Christov}

\address{Ognyan Christov, Department of Mathematics,Faculty of Mathematics and Informatics, Sofia University, 1164 Sofia, Bulgaria}
\email{christov@fmi.uni-sofia.bg}

 	\author{Sevdzhan Hakkaev} 
 \address{Sevdzhan Hakkaev, ORCID 0000-0002-1531-8340, Department of Mathematics, Faculty of Science, Trakya University, \\ 22030 Edirne, Turkey and \\ Institute of Mathematics and Informatics, \\ Bulgarian Academy of Sciences, Acad. G. Bonchev Str. bl. 8, 1113 Sofia, Bulgaria}
 \email{s.hakkaev@math.bas.bg}

 \author{SEUNGLY OH}
 
 \address{Seungly Oh, Department of Mathematics, Western New England University, 1215 Wilbraham Road\\
 	Springfield, Massachusetts, 01119, USA}
 \email{seungly.oh@wne.edu}

 \author[Atanas G. Stefanov]{\sc Atanas G. Stefanov}
 \address{Atanas G. Stefanov, ORCID 0000-0002-0934-506X\\  Department of Mathematics, University of Alabama - Birmingham,
 	1402 10th Avenue South
 	Birmingham AL 35294, USA.
 }
\email{stefanov@uab.edu}

\subjclass[2000]{35Q53, 76B25}
\keywords{Drinfeld-Sokolov-Wilson system, travelling waves, stability}

\date{\today}

\thanks{Stefanov is partially supported by NSF-DMS under grant \# 220478.}

\begin{abstract}
   We analyze the Drinfeld-Sokolob-Wilson system, which features a dispersive, KdV type evolution with a dispersionless conservation law.  We establish well-posedness with low regularity initial data $L^2({\mathbb T})\times L^2({\mathbb T})$ for the Cauchy problem on periodic background, which is then extrapolated to global solutions, due to $L^2$ conservation law. We also establish a dynamically more relevant result, namely  a global persistence of solutions with (large) initial data in  $H^1({\mathbb T})\times L^2({\mathbb T})$.  This is obtained by following a more sophisticated approach, specifically the method of normal forms. 
   
   Finally, for a fixed period $L$, we construct an explicit one parameter  family of periodic waves, see \eqref{2.16} below.  We  show that they are all spectrally unstable with respect to co-periodic perturbations. Specifically, we show that the Hamiltonian instability index is equal to one, which identifies the instability as a single positive growing mode.

\end{abstract}

\maketitle

  \section{Introduction}
  We consider  the following system 
  \begin{equation}\label{eq:1}
  	\left\{ \begin{array}{l} u_t + (uv)_x  + u_{xxx} = 0,\\
  		v_t + uu_x  = 0\\ 
  		u|_{t=0} = u_0 \in L^2_{per.}(0,L), \quad v|_{t=0} = v_0 \in L^2_{per.}(0,L). \end{array}\right.   
  \end{equation}
   The system was derived as a model of   strong interactions of two-dimensional long internal gravity waves, \cite{BCS1, BCS2}, see also a more ad-hoc derivation in the earlier works \cite{DS, Wil}. It is worth noting that while the $u$ equation displays standard dispersive properties, it is coupled with the $v$ equation, which is a simple mass conservation, and by itself, does not exhibit dispersive behavior. Our goal in this project is two fold - we would like to study the well-posedness of this system   as well as the existence and spectral stability    of traveling wave (TW) solutions. 
   
   \subsection{Well-posedness of the DSW system}
   We discuss some specifics related to the well-posedness properties of the system \eqref{eq:1}. This will be done, as said above,  in the standard scale of Sobolev spaces. We however list the relevant conservation laws, which are usually indicative of the dynamical picture. More concretely, 
   the conserved quantities\footnote{Henceforth, the notation $\int f dx$ will be a short-cut for an integration over one space period, $\int_0^L f(x) dx$, unless otherwise indicated} for classical solutions are 
   \begin{equation}
   	\label{7} 
   	  \int_0^L u dx;  \int_0^L  v dx; 
   	   \int_0^L   (u_x^2 – u^2 v)dx; 
   	  \int_0^L   (u^2+ v^2) dx
   \end{equation}
   We are now ready to state our main well-posedness results, modulo some minor technical definitions, which will be provided at a later point. 
   \begin{theorem}[Local well-posedness in $L^2$]
   	\label{th:wp}
 Let $0\leq s \leq \f{1}{2}$. 	Given $u_0 \in H^s_{per.}(0,L)$ and $v_0\in L^2_{per.}(0,L)$, we can find $T>0$ depending only on $\n{u_0}{H^s}+\n{v_0}{L^2}$ so that there exist a solution for \eqref{eq:1}, $(u, v) \in C^0_t H^s_x \times C^0_t L^2_x$ on where $t\in [0,T]$.  This solution is unique within an auxiliary space $(u,v) \in Y\hookrightarrow C^0_t H^s_x \times C^0_t L^2_x$ to be defined below.  Furthermore, the solution map $(u_0, v_0)\mapsto (u,v)$ is  continuous in $H^s\times L^2 \to C^0_t H^s_x \times C^0_t L^2_x$ topology.  
     \end{theorem}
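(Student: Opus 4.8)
The plan is to establish Theorem~\ref{th:wp} by a Fourier restriction norm (Bourgain space) contraction, exploiting the asymmetry of the two equations. A naive energy method fails at once, since the nonlinearity $(uv)_x$ costs a derivative and the equation $v_t=-uu_x$ offers no smoothing to recover it; dispersion of $u$ must do the work. So I would place the $u$-equation, an Airy equation with quadratic derivative nonlinearity, in a space $X=X^{s,b}$ adapted to the Airy symbol (weight $\langle\tau-k^3\rangle^b$), and measure $v$ in a companion space $Z$ adapted to the trivial symbol (weight $\langle\tau\rangle^{b'}$), morally $H^{b'}_tL^2_x$ restricted to a short time interval. From the Duhamel reformulation
\begin{align*}
u(t)&=e^{-t\partial_x^3}u_0-\int_0^t e^{-(t-t')\partial_x^3}\,\partial_x(uv)(t')\,dt',\\
v(t)&=v_0-\tfrac12\int_0^t\partial_x(u^2)(t')\,dt',
\end{align*}
I would localize in time at scale $T$ and run the contraction $\Phi=(\Phi_1,\Phi_2)$ on a ball of radius comparable to $\|u_0\|_{H^s}+\|v_0\|_{L^2}$ in $X\times Z$. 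Because the means $\int_0^L u\,dx$ and $\int_0^L v\,dx$ are conserved, $\partial_x(uv)$ produces, besides its genuinely quadratic part, the constant-coefficient first-order terms $(\int v_0)\,u_x$ and $(\int u_0)\,v_x$, and $\partial_x(u^2)$ produces $(\int u_0)\,u_x$; each is one derivative against three of Airy smoothing and is absorbed with room to spare by the inhomogeneous Duhamel estimate.

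The linear estimates (boundedness of the free propagator and of the Duhamel integral on $X$ and on $Z$, with a small positive power of $T$ on the inhomogeneous pieces) are standard once the spaces are chosen, and the whole argument reduces to the two bilinear estimates
\begin{align*}
\|\partial_x(uv)\|_{X'}&\lesssim\|u\|_X\,\|v\|_Z,\\
\|\partial_x(u^2)\|_{Z'}&\lesssim\|u\|_X^2,
\end{align*}
where $X',Z'$ are the natural ``$b-1$'' companions. After Plancherel each reduces to a bilinear multiplier controlled, with $k=k_1+k_2$, by the resonance identities
\begin{align*}
(\tau-k^3)-(\tau_1-k_1^3)-\tau_2&=k_1^3-k^3=-k_2\,(3k_1^2+3k_1k_2+k_2^2),\\
\tau-(\tau_1-k_1^3)-(\tau_2-k_2^3)&=k_1^3+k_2^3=k\,(k_1^2-k_1k_2+k_2^2).
\end{align*}
For the second estimate the output at $k=0$ is killed by $\partial_x$, and for $k\neq0$ the resonance is $\gtrsim|k|^3$, which beats the $\partial_x$ and the half-derivative lost in squaring; a short case analysis (by which modulation is largest) then closes it. This is exactly the mechanism that keeps $v\in L^2$ even when $u$ is merely in $L^2$: the factor $k_1^3+k_2^3$ is the denominator that appears upon time-averaging $u^2$ against the Airy oscillation of $u$.

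The first bilinear estimate is the crux, and I expect it to be the main obstacle, because $v$ brings no modulation gain of its own. In the regime $|k|\sim|k_1|\gg|k_2|$ (a high-frequency $u$ against a low-frequency piece of $v$, the output still high), the resonance is only $\gtrsim|k_2|\,|k|^2\gtrsim|k|^2$, so against the single $\partial_x$ one must extract one full derivative from a merely quadratic modulation gain. This is sharp: it forces $b=\tfrac12$ for the $u$-space, at which $X^{s,1/2}$ fails to embed in $C^0_tH^s_x$; hence $X$ and $Z$ must be taken in the $\ell^1$-Besov refined form $X^{s,1/2,1}$, which does embed in $C^0_tH^s_x$ (resp.\ $C^0_tL^2_x$) and still yields a power of $T$ in the Duhamel step. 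Verifying that this refinement is compatible with every subcase of both bilinear estimates is the delicate point. The range of $s$ then falls out of the same estimates at $b=b'=\tfrac12$: in the opposite regime $|k_1|=O(1)$, $|k_2|\sim|k|\sim N$ (a low-frequency $u$ against a high-frequency, spread-out $v$, with $v$ measured only at $L^2$-regularity), the full resonance $\sim N^3$ sits on the output modulation and the symbol is of size $\sim|k|^{1+s}\langle\tau-k^3\rangle^{b-1}\sim N^{s+3b-2}$, bounded precisely for $s\le 2-3b=\tfrac12$; symmetrically, the worst configuration for the second estimate is bounded precisely for $s\ge\tfrac{3b'}{2}-\tfrac34=0$.

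With both bilinear estimates established, the $T$-power in the Duhamel maps makes $\Phi$ a contraction on that ball for $T=T(\|u_0\|_{H^s}+\|v_0\|_{L^2})$ small; the fixed point $(u,v)\in X\times Z\hookrightarrow C^0_tH^s_x\times C^0_tL^2_x$ is the claimed solution, unique in the auxiliary space $Y:=X\times Z$. Continuous dependence of $(u_0,v_0)\mapsto(u,v)$, in fact Lipschitz continuity on bounded subsets of $H^s\times L^2$, follows in the standard way: the difference of two solutions with data in a fixed ball solves an equation of the same type and satisfies the same linear and bilinear bounds, the nonlinearities being bilinear.
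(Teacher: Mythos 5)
Your strategy coincides with the paper's: $u$ is placed in the KdV Bourgain space $X^{s,1/2}$, $v$ in $H^{1/2}_tL^2_x$, the two bilinear estimates are driven by precisely the resonance identities you write down, and the argument closes by contraction at the $b=1/2$ endpoint. The paper repairs the failure of $X^{s,1/2}\hookrightarrow C^0_tH^s_x$ by intersecting with $H^s_xL^1_\tau$ rather than passing to the $\ell^1$-Besov refinement $X^{s,1/2,1}$; both devices are standard and serve the same purpose, and the constraint $0\le s\le\tfrac12$ emerges from the same frequency regimes you identify.

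There is, however, one genuine error: the treatment of the zero Fourier mode. You claim the constant-coefficient first-order terms produced by the means — in particular $(\int v_0)\,u_x$ — are ``absorbed with room to spare by the inhomogeneous Duhamel estimate.'' That step fails. These terms are \emph{linear} in the unknown, so there is no resonance identity, no modulation gain, and no factor $T^{0+}$ from time localization. Concretely, to put $\int_0^t e^{-(t-s)\partial_x^3}\bigl((\int v_0)u_x\bigr)\,ds$ back into $X^{s,1/2}$ one would need the symbol bound $|k|\lesssim\langle\tau-k^3\rangle$, which fails precisely on the characteristic set $\tau\approx k^3$ where the bulk of $\widehat u$ lives; and even where it holds, the map carries no power of $T$, so it cannot be treated as a small perturbation in the contraction. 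The correct device — used in the paper and standard since Bourgain's periodic KdV argument — is to subtract the conserved mean $g_0=\int v_0$ from $v$ and then eliminate the resulting transport term $-g_0u_x$ by the Galilean change of variable $u(t,x)\mapsto u(t,x+g_0t)$ (equivalently, by building $g_0k$ into the modulation weight $\langle\tau-k^3+g_0k\rangle$). Once that is done, the mean of $u$ requires no special peeling at all: the bilinear estimate for $\partial_x(uv)$ already covers the frequency $k_1=0$, since the resonance $H_1=\langle k_2\rangle\max\{k^2,k_1^2\}$ stays $\sim|k|^3$ there. You correctly note that the means are conserved, but you do not exploit that fact for the boost; as written, your fixed-point iteration does not close.
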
 
 The local well-posedness stated above assumes that the regularity of $v_0$ is fixed at $L^2$, while $u_0$ can take a higher regularity.  The standard bilinear estimate method used to obtain this result limits the regularity of $u$ to be at most $H^{\f{1}{2}}$. The method of our proof can be adapted to produce an analogous and slightly more general well-posedness statement for initial data $(u_0, v_0) \in H^{s_1} \times H^{s_2}$ where $0\leq s_2 \leq s_1 \leq s_2 + \f{1}{2}$.\\

A particular case of interest for Theorem~\ref{th:wp} is when $s=0$, where $(u_0, v_0) \in L^2 \times L^2$.  Due to the conserved quantity $\int u^2+v^2$, the local wellposedness statement in this case leads directly to a corresponding global-in-time wellposedness statement.

\begin{corollary}\label{cor:wp}
Given initial data $(u_0, v_0)\in L^2_x\times L^2_x$, the solution given in Theorem~\ref{th:wp} extends to a global-in-time solution, so that $(u(t), v(t)) \in C^0_t L^2_x \times C^0_t L^2_x$ for all $t \geq 0$. 
\end{corollary}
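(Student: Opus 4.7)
The plan is to bootstrap the local wellposedness of Theorem~\ref{th:wp} (in the special case $s=0$) into a global-in-time statement by means of the \textit{a priori} bound coming from the $L^2$ conservation law $\int (u^2+v^2)\,dx$ listed in \eqref{7}. The key structural fact is that Theorem~\ref{th:wp} provides an existence time $T>0$ that depends only on the quantity $\n{u_0}{L^2}+\n{v_0}{L^2}$; therefore, as long as this quantity remains finite and uniformly controlled in time, the solution can be continued indefinitely by iterating the local result.

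First, I would verify the conservation law at the $L^2\times L^2$ regularity level. For smooth, classical solutions, the identity $\frac{d}{dt}\int(u^2+v^2)\,dx=0$ follows by a direct computation using integration by parts: $\int u\,u_{xxx}\,dx=0$, and the cross terms $\int u\,(uv)_x\,dx$ and $\int v\,u u_x\,dx$ cancel after one integration by parts (both reduce to $\mp\int u u_x v\,dx$). To transfer this identity to rough solutions, I would approximate $(u_0,v_0)\in L^2\times L^2$ by a sequence of smooth data $(u_0^{(n)},v_0^{(n)})\in H^\infty\times H^\infty$ in the $L^2\times L^2$ topology, invoke Theorem~\ref{th:wp} to obtain corresponding solutions on a common time interval $[0,T]$ (using that the existence time depends only on the $L^2$ norms, which are uniformly bounded by construction), apply the continuous dependence statement of Theorem~\ref{th:wp} to pass to the limit $(u^{(n)},v^{(n)})\to(u,v)$ in $C^0_tL^2_x\times C^0_tL^2_x$, and thereby transfer the conservation identity from the smooth approximants to $(u,v)$ on $[0,T]$.

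Once the conservation identity is in hand, the global extension is routine. Set $M:=\n{u_0}{L^2}^2+\n{v_0}{L^2}^2$ and let $T=T(M)>0$ be the corresponding existence time furnished by Theorem~\ref{th:wp}. By the conservation law, $\n{u(T)}{L^2}^2+\n{v(T)}{L^2}^2=M$, so Theorem~\ref{th:wp} applied at time $T$ with initial data $(u(T),v(T))$ yields a solution on $[T,2T]$ that agrees with $(u,v)$ at $t=T$ and extends it. Iterating this procedure, one obtains a solution on $[0,nT]$ for every $n\in\mathbb{N}$, hence a global-in-time solution in $C^0_tL^2_x\times C^0_tL^2_x$ for $t\ge 0$.

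The one genuinely delicate point is the justification of the conservation law at $L^2$ regularity, since the integrands $u(uv)_x$, $u\,u_{xxx}$, and $v\,uu_x$ are not individually integrable at this level of smoothness. The smooth-approximation and limiting argument sketched above handles this; an alternative would be to establish the identity directly on the auxiliary resolution space $Y$ in which uniqueness holds, by exploiting the improved regularity encoded in $Y$ to make sense of the relevant cubic integrals. Either route uses only Theorem~\ref{th:wp} together with the algebraic cancellation already present for smooth solutions.
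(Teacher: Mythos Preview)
Your argument is correct and matches the paper's own approach: use the conserved quantity $\int(u^2+v^2)\,dx$ to get a uniform bound on $\n{u(t)}{L^2}+\n{v(t)}{L^2}$, then iterate the local result of Theorem~\ref{th:wp} on intervals of fixed length $T$. Your discussion of how to justify the conservation law at the $L^2\times L^2$ level via smooth approximation and the continuous dependence in Theorem~\ref{th:wp} is in fact more careful than the paper, which simply asserts the conservation without further comment.
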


Next, we consider the solution with initial data $(u_0, v_0) \in H^1 \times L^2$ which is not covered by the statement of Theorem~\ref{th:wp}.  While we can obtain a unique solution $(u(t), v(t)) \in C^0_t L^2_x \times C^0_t L^2_x$, we want to claim $(u(t), v(t)) \in C^0_t H^1_x \times C^0_t L^2_x$ for $t\geq 0$.  We cannot achieve this result solely based on standard  bilinear estimates.  We will employ a normal form transformation approach to achieve this smoothing effect \cite{shatah}.  Typically, normal form transformation, or equivalently differentiation-by-parts \cite{titi, tzirakis} is used to transform a quadratic nonlinearity to a cubic one \cite{shatah}.  However, another useful application is to use the normal form to filter out the roughest portion of a nonlinearity \cite{stefanov}, thereby enabling an estimate in a much smoother norm for the solution.  We take this approach to place $u(t)$ in the target space $C^0_t H^1_x$.
 
   \begin{theorem}(Persistence of regularity in $H^1\times L^2$)
   	\label{prop:109} 
   	
   	Let  $(u_0, v_0)\in H^1(0,L)\times L^2(0,L)$. Then, the global-in-time solution $(u(t),v(t))$ given in Corollary~\ref{cor:wp} also lives in $C^0_t H^1_x \times C^0_t L^2_x$ for all $t\geq 0$.  More specifically, the solution $(u,v)$ satisfies
$$
\sup_{t\geq 0} \n{u(t)}{H^1_x} + \n{v(t)}{L^2_x} \lesssim C\left(\n{u_0}{H^1}, \n{v_0}{L^2}\right)
$$
   \end{theorem}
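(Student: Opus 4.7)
My plan combines the conserved Hamiltonian with a normal-form reduction. At the formal level, $H(u,v):=\int_0^L (u_x^2-u^2 v)\,dx$ is a conserved quantity, and the one-dimensional Gagliardo--Nirenberg inequality $\|u\|_{L^4}^2\lesssim\|u\|_{L^2}^{3/2}\|u_x\|_{L^2}^{1/2}$ combined with Young's inequality yields
$$\left|\int_0^L u^2 v\,dx\right|\le \|u\|_{L^4}^2\|v\|_{L^2}\le \tfrac12\|u_x\|_{L^2}^2+C\bigl(\|u\|_{L^2},\|v\|_{L^2}\bigr).$$
Together with the conservation of $\|u\|_{L^2}^2+\|v\|_{L^2}^2$ from Corollary~\ref{cor:wp} and the identity $\|u_x\|_{L^2}^2=H(u_0,v_0)+\int u^2 v\,dx$, this closes the uniform-in-time $H^1$ estimate. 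The only obstruction is that the $L^2$-solution of Corollary~\ref{cor:wp} is not known a priori to belong to $H^1$, so $H(u,v)$ is not meaningful and the required integrations by parts cannot be justified. The normal form supplies exactly this missing regularity.

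Since $v_t=-(u^2/2)_x$, the mean $\bar v:=\tfrac1L\int v\,dx$ is conserved, so I decompose $v=\bar v+w$ with $\int w\,dx=0$ and rewrite the first equation as $u_t+\mathcal Lu=-(uw)_x$, where $\mathcal L=\partial_x^3+\bar v\,\partial_x$. The quadratic phase attached to the interaction $(uw)_x$ is, in Fourier variables with $k_1+k_2=k$,
$$\phi(k,k_1,k_2)=k_2\bigl(k^2+kk_1+k_1^2-\bar v\bigr),$$
which vanishes only where $k_2=0$ (automatically excluded because $w$ is mean-zero) or on a finite low-frequency set depending on $|\bar v|$; outside this set one has $|\phi|\gtrsim|k_2|\,|k|^2$ since $k^2+kk_1+k_1^2\ge\tfrac12(k^2+k_1^2)$. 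Integration by parts in $s$ inside Duhamel's formula
$$u(t)=e^{-t\mathcal L}u_0-\int_0^t e^{-(t-s)\mathcal L}(uw)_x(s)\,ds$$
then produces the normal-form identity
$$u(t)=e^{-t\mathcal L}u_0+B(u,w)(t)-e^{-t\mathcal L}B(u_0,w_0)+\int_0^t e^{-(t-s)\mathcal L}\mathcal R(u,w)(s)\,ds,$$
where $B$ is the bilinear operator with Fourier symbol $-k/\phi$ and $\mathcal R$ is the trilinear remainder produced by substituting $u_t=-\mathcal Lu-(uw)_x$ and $w_t=-(u^2/2)_x$. The pointwise bound $|k/\phi|\lesssim 1/(|k_2||k|)$ in the non-resonant regime yields the bilinear estimate $\|B(f,g)\|_{H^1}\lesssim\|f\|_{L^2}\|g\|_{L^2}$, and the derivative generated by $w_t$ is absorbed because the corresponding trilinear symbol carries an extra factor $k_2/\phi$, so that $\|\mathcal R(u,w)\|_{H^1}$ is controlled by $L^2$-norms of $u$ and $w$.

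Putting these ingredients together, if $(u_0,v_0)\in H^1\times L^2$ then each of the four terms on the right-hand side of the normal-form identity lies in $H^1$ for every $t$ at which the $L^2$-solution from Corollary~\ref{cor:wp} exists; hence $u\in C^0_t H^1_x$ qualitatively. A standard approximation of $(u_0,v_0)$ by smooth data, for which $\tfrac{d}{dt}H(u,v)=0$ holds literally, then lets me pass to the limit and obtain the conservation of $H$ for the rough solution; the first-paragraph estimate closes the argument and yields the stated uniform-in-time bound. I expect the main obstacle to be the bilinear and trilinear multiplier analysis underlying the normal-form identity: verifying $B:L^2\times L^2\to H^1$ and controlling $\mathcal R$ on the periodic lattice requires careful treatment near the diagonals where $\phi$ degenerates (the mean-zero condition on $w$ already removes the worst resonance $k_2=0$, but the set $k^2+kk_1+k_1^2=\bar v$ and the high--high-to-low regime $|k_1|\sim|k_2|\gg|k|$ must be handled by hand), and it is there that the full-derivative smoothing claimed for $B$ genuinely has to be earned.
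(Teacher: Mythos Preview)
Your overall architecture is exactly that of the paper: a normal form to remove the worst quadratic interaction, after which the conserved Hamiltonian $\int (u_x^2 - u^2 v)$ closes the $H^1$ bound via Gagliardo--Nirenberg. The bilinear claim $B:L^2\times L^2\to H^1$ is correct and proved essentially as you indicate, since $\langle k\rangle\,|k/\phi|\lesssim |k_2|^{-1}$ and $\|f\cdot\partial_x^{-1}g\|_{L^2}\lesssim\|f\|_{L^2}\|g\|_{L^2}$ on the torus.

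The genuine gap is the trilinear remainder. Your claim that ``$\|\mathcal R(u,w)\|_{H^1}$ is controlled by $L^2$-norms of $u$ and $w$'' is false at the pointwise-in-time level. For the $w_t$-contribution the $H^1$-weighted symbol $\langle k\rangle\,k/(k^2+kk_1+k_1^2)$ is $\sim 1$ in the regime $|k_1|\ll|k|\sim|k_2|$, so that piece of $\mathcal R$ dominates $\|u\cdot u^2\|_{L^2}=\|u\|_{L^6}^3$, which is not controlled by $\|u\|_{L^2}$. For the $u_t$-contribution the $H^1$-weighted symbol is $\sim\langle k\rangle/|k_2|$ when $|k|\sim|k_1|$, and after placing $|k_2|^{-1}$ on $w$ you are left with $\|J_x(uw)\|_{L^2}$, which again requires more than $L^2$ on $u,w$. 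So a single normal form with fixed-time $L^2$ input does not close; this is the same obstruction that prevents a one-step Babin--Ilyin--Titi argument for KdV at $L^2$ from closing without further iteration or dispersive input.

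The paper repairs this by working in the Bourgain spaces $X^{s,b}$ rather than pointwise in $t$: the trilinear terms are placed in $X^{1,-\frac12}$ and estimated using the modulation identity together with the Strichartz-type embedding $X^{0,\frac13}\hookrightarrow L^4_{t,x}$, and the argument feeds in the already-established control $u\in X^{\frac12,\frac12}$ from the $s=\tfrac12$ case of Theorem~\ref{th:wp} (see \eqref{eq:useful}). The paper also performs the normal form only against the time-independent part $v_0$ (writing $v=v_0+z$ with $z$ genuinely smoother), which keeps the remaining interactions bilinear in $(w,v)$ or $(u,z)$ plus two manageable trilinear pieces. Your proposal can be salvaged along these lines, but not with the purely spatial $L^2$ bookkeeping you describe.
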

 
The local well-posedness, as indicated above,  is achieved through the technical method of normal forms, see Section \ref{sec:4}.   Let us briefly comment on the specifics of the approach. 
 
 Starting with $(u_0, v_0)\in H^1(\ct)\times L^2(\ct)$, we reduce, in a standard manner,    to zero mean data for $u$.  Next, one is tempted to  produce local solutions for \eqref{eq:1} via a fixed point method in  the KdV based Bourgain space $u\in X^{1, \f{1}{2}}(\ct)$. This is however easily seen to fail (in fact the best one can say is that $u\in X^{\f{1}{2}, \f{1}{2}}$), see the discussion right before Lemma \ref{le:7}. The fix is to seek a ``normal form'' decomposition, which resolves the worst behaved terms (in the $X^{s,b}$ scale) via smooth solutions in addition to a  remainder term, which is  indeed placed in  $X^{1, \f{1}{2}}(\ct)$, see \eqref{eq:3}.

   \subsection{Periodic solutions}
 We now discuss the existence of periodic TW solutions. As it turns out, there exists a rich family of those, for different values of the parameters, each with different properties. 
   Assume $c > 0$. As usual we put
   \begin{equation}
   	\label{2.2}
   	\xi = x - c t, \quad u (t, x) = \psi (\xi), \quad v (t, x)
   	= \phi (\xi) .
   \end{equation}
   Then substituting these expressions into the system \eqref{eq:1},  we obtain
  \begin{equation}
  		\label{2.3} 
  		  	\left\{ \begin{array}{l}
  	- c \psi'  + \psi' \phi + \psi \phi'  + \psi''' = 0, \\
  	- c \phi'  + \psi \psi' = 0.
  \end{array}\right.
  \end{equation}
   From the second equation of (\ref{2.3}) we get
   $$
   -c \phi + \frac{\psi^2}{2} = D_1
   $$
   Taking  $D_1=0$ yields\footnote{Here, taking $D_1\neq 0$ is an option and it will not create any extra difficulties, as it can be absorbed in the speed parameter $c$. In order to simplify the exposition, we take $D_1=0$} 
   \begin{equation}
   	\label{2.4}
   	\phi =  \frac{\psi^2}{2 c} .
   \end{equation}
 Substituting this expression into the first equation of (\ref{2.3}) yields an exact derivative equal to zero. Hence, upon introdusing an extra constant $F_1$, we may integrate this to the form 
 \begin{equation}
 	\label{2.5}
 	\psi'' + \frac{\psi^3}{2 c} - c \psi  = F_1
 \end{equation}
 Multiplying by $\psi'$ and realizing once again that this is an exact derivative, we can further integrate once more. Upon introducing an extra free constant $E$,   we get
 \begin{equation}
 	\label{2.6}
 	\frac{1}{2} (\psi')^2 + \frac{1}{8 c} \psi^4 - \frac{c}{2} \psi^2 - F_1 \psi = E .
 \end{equation}
To simplify matters, we take $E=0$. The last equation takes the  form
\begin{equation}
	\label{2.7}
	\frac{1}{2} (\psi')^2  + U (\psi) = E, 0\leq x\leq L, \quad \quad U (\psi) = \frac{\psi}{8 c} (\psi^3 - 4 c^2 \psi - 8 c F_1 )
\end{equation}
 This is a well-known Newton type equation for the unknown function $\psi$, which takes the form of a quartic polynomial. There are several possible family of solutions to \eqref{2.7}, depending on the zeroes of the quartic $U$. The further, somewhat subtle analysis of these solutions will be performed in the subsequent sections. We state a preliminary result, based on our progress so far. 
\begin{proposition}
	\label{prop:10} 
	Let $c>0$. Suppose that $\psi$ is a smooth, $L$-periodic solution to \eqref{2.7}. Then, the pair 
	\begin{equation}
		\label{16} 
			(u,v)=(\psi(x- ct), \phi(x-c t))=(\psi(x- ct), \f{\psi^2(x-c t)}{2c})
	\end{equation}
	is a solution to \eqref{eq:1}. In addition, this is a one-parameter family of such functions. More precisely, we can express the 
	one-parameter family of $L$ periodic functions 
	 \begin{equation}
		\label{2.16}
		\psi (\xi) = \eta_4 \frac{ dn^2 (\frac{2K(\ka) \xi}{L}, \kappa)}{1 + \beta^2 sn^2  (\frac{2K(\ka) \xi}{L}, \kappa)}, \quad \psi (0) = \eta_4 ,
	\end{equation}
	where 
	we have  introduced the convenient parameters (with $\ka\in (0,1)$), 
	\begin{eqnarray}
		\nonumber h(k) &=&  4 \sqrt{1-k^2+k^4} \\
			\label{2.27}
		c  &=&  \frac{4 {\rm K}^2 (\kappa)}{L^2} h (\kappa); \\ 
			\label{2.28}
		\eta_4 &=&  \frac{8 \sqrt{2} {\rm K}^2 (\kappa)}{\sqrt{3} L^2} \sqrt{h (\kappa)[h (\kappa) + 2 (2\kappa^2 -1)]} ;\\
			\label{2.31}
		\beta^2 &=&  2 \kappa^2 \frac{\sqrt{h (\kappa) + 2(2 \kappa^2 -1)}}{\sqrt{h (\kappa) + 2(2 \kappa^2 -1)} + \sqrt{3} \sqrt{h (\kappa) - 2(2 \kappa^2 -1)}}\\
		\label{2.a10} 
		F_1 &=& -\f{\eta_4}{8 c}(4c^2-\eta_4^2).
	\end{eqnarray}
	
\end{proposition}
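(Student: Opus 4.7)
The first assertion is immediate from the derivation leading up to the statement: the traveling-wave ansatz \eqref{2.2}, combined with the integration steps that produce \eqref{2.4}, \eqref{2.5}, and \eqref{2.6} (with the specific choices $D_1=0$, $E=0$ and free parameter $F_1$), reduces the system \eqref{2.3} to the Newton equation \eqref{2.7}. Consequently, any smooth $L$-periodic $\psi$ solving \eqref{2.7} yields a solution of \eqref{eq:1} via the pair \eqref{16}.

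For the explicit parametrization \eqref{2.16}, I would first factor
\begin{equation*}
-2U(\psi) \;=\; -\frac{1}{4c}\,\psi\bigl(\psi^3 - 4c^2\psi - 8cF_1\bigr) \;=\; -\frac{1}{4c}\,\psi(\psi-\eta_2)(\psi-\eta_3)(\psi-\eta_4),
\end{equation*}
with $\eta_1=0$ and $\eta_2,\eta_3,\eta_4$ the roots of the cubic. A nontrivial $L$-periodic orbit requires three distinct real roots; to keep $\psi$ positive I would impose the ordering $\eta_2 < 0 = \eta_1 < \eta_3 \le \psi \le \eta_4$, which is the only configuration producing $(\psi')^2\ge 0$ on a bounded interval of positive values. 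Vieta's formulas applied to the cubic give
\begin{equation*}
\eta_2+\eta_3+\eta_4 = 0,\qquad \eta_2\eta_3+\eta_2\eta_4+\eta_3\eta_4 = -4c^2,\qquad \eta_2\eta_3\eta_4 = 8cF_1,
\end{equation*}
and in particular $\eta_4^3 - 4c^2\eta_4 = 8cF_1$ yields \eqref{2.a10} directly.

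Next I would substitute the ansatz $\psi(\xi) = \eta_4\,dn^2(\alpha\xi,\kappa)/\bigl(1+\beta^2 sn^2(\alpha\xi,\kappa)\bigr)$ into \eqref{2.7} and use the Jacobi identities $sn^2+cn^2=1$, $dn^2+\kappa^2 sn^2=1$, together with the derivative rules $(sn)'=cn\,dn$, $(cn)'=-sn\,dn$, $(dn)'=-\kappa^2 sn\,cn$. This converts the ODE into a polynomial identity in $sn^2$ whose vanishing coefficients produce algebraic relations among $\alpha,\beta,\kappa,c,\eta_4$. Demanding that $\psi$ have fundamental period $L$ forces $\alpha = 2K(\kappa)/L$ (both $dn^2$ and $sn^2$ are $2K$-periodic in their first argument). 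Combining this with the coefficient-matching relations and the Vieta constraints above produces the closed-form expressions \eqref{2.27}, \eqref{2.28}, \eqref{2.31} for $c$, $\eta_4$, and $\beta^2$ in terms of $\kappa$ and $L$; the symmetric quantity $h(\kappa) = 4\sqrt{1-\kappa^2+\kappa^4}$ emerges as a discriminant-type combination linking the three nonzero roots.

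The main obstacle is the algebraic bookkeeping in the last step: solving the coefficient-matching system and the Vieta relations simultaneously to extract the clean closed-form parameters, then checking that these choices genuinely yield real solutions with $0<\eta_3<\eta_4$ for every $\kappa\in(0,1)$. In particular, the square-root combinations appearing in \eqref{2.28} and \eqref{2.31} are not obvious a priori, and one must verify that $h(\kappa)\pm 2(2\kappa^2-1)$ are both nonnegative on $(0,1)$ so that the formulas make sense; this positivity corresponds precisely to the admissibility of the bounded orbit in $[\eta_3,\eta_4]$, and the limits $\kappa\to 0^+$, $\kappa\to 1^-$ should recover, respectively, the constant/small-amplitude regime and the solitary wave limit.
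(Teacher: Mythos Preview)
Your overall plan is sound and would establish the proposition, but it diverges from the paper's derivation in the key step of producing the explicit formula \eqref{2.16}. You propose to \emph{postulate} the elliptic ansatz $\psi=\eta_4\,dn^2/(1+\beta^2 sn^2)$, substitute it into the Newton equation \eqref{2.7}, and match coefficients of powers of $sn^2$. The paper instead proceeds in the classical way: it separates variables in $(\psi')^2=-2U(\psi)$, obtains the elliptic integral $\int d\psi/\sqrt{(\eta_4-\psi)(\psi-\eta_3)\psi(\psi-\eta_1)}$, and then invokes the standard reduction (Bird--Friedman, formula 257.00) via the substitution $sn^2(y,\kappa)=\dfrac{(\eta_3-\eta_1)(\eta_4-\psi)}{(\eta_4-\eta_3)(\psi-\eta_1)}$, which \emph{derives} the formula \eqref{2.14}--\eqref{2.16} rather than guessing it. The modulus $\kappa$ and the scale $a$ come out of the substitution automatically, after which \eqref{2.21}--\eqref{2.23} are inverted algebraically to give \eqref{2.27}--\eqref{2.31}. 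The paper also supplements this with an implicit-function-theorem argument (its Theorem~\ref{thm2.1}) showing that the period map $\eta_4\mapsto T_\psi$ is strictly increasing, which is what guarantees that for each $L$ one obtains a genuine one-parameter family in $\kappa$.

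Your coefficient-matching route would work in principle but is heavier bookkeeping and presupposes the form of the answer; the paper's elliptic-integral reduction is more systematic and explains \emph{why} that particular combination $dn^2/(1+\beta^2 sn^2)$ arises. A minor notational mismatch: you set $\eta_1=0$ and call the negative root $\eta_2$, while the paper takes $0$ as a root with no label and calls the negative root $\eta_1$; the Vieta relations are otherwise identical.
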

Our next focus will be on the stability of these solutions. 
\subsection{On the stability of the periodic TW solutions}
For a pair $\phi, \psi$, solutions to \eqref{2.7}, as described in Proposition \ref{prop:10}, we first perform the linearization. Specifically,  with the ansatz 
$$
u(t,x)=\psi(x- c t)+e^{\la t} U(x-c t), v(t,x)=\phi(x- c t)+e^{\la t} V(x-c t)
$$
which we place in \eqref{eq:1}, we expand and ignore $O(U^2+V^2)$. As a result, we obtain 
\begin{equation}
	\label{40} 
 \p_\xi \left(\begin{array}{cc}
	 -\p_{\xi\xi}+c - \phi & -\psi \\
	 -\psi & c
\end{array}\right)  \left(\begin{array}{c}
U \\ V 
\end{array}\right) =	\la \left(\begin{array}{c}
U \\ V 
	\end{array}\right)
\end{equation}
We introduce some more notations, namely 
\begin{eqnarray*}
	\cl_- &:= &  -\p_{\xi\xi}+c - \phi  =  -\p_{\xi\xi}+c - \f{\psi^2}{2c}, \\
	\cl_+ &:=& -\p_{\xi\xi}+c -  \f{3 \psi^2}{2c}\\ 
	\ch &:= &  \left(\begin{array}{cc}
		-\p_{\xi\xi}+c - \phi & -\psi \\
		-\psi & c
	\end{array}\right) =  \left(\begin{array}{cc}
		\cl_-& -\psi \\
		-\psi & c
	\end{array}\right) 
\end{eqnarray*}
 That is,   for the solution \eqref{16}, the linearized problem is in the form 
 \begin{equation}
 	\label{41} 
 	\p_\xi \ch \vec{Z}=\la \vec{Z}, 
 \end{equation}
  As is standard, we have the following definition. 
     \begin{definition} 
     	\label{defi:10} 
     	The periodic wave $\Psi=(\psi,\phi)$ is said to be spectrally stable (with respect to co-periodic perturbations), if the eigenvalue problem \eqref{41}, with $D(\ch)=H^2(0,L)\times L^2(0,L)$, does not have any solutions $(\la, \vec{Z}): \Re\la>0$. Otherwise, the $\Psi$ is spectrally unstable.

   \end{definition}
   It is then time to state our main result. 
   \begin{theorem}
   	\label{theo:15} 
   	Let $L>0$, and $(\psi, \vp)$ is the one-parameter family of $L$ periodic traveling wave solutions,  described in Proposition \ref{prop:10}.  
   Then, all of these waves are spectrally unstable in the sense of Definition \ref{defi:10}. Specifically, the instability is due to exactly one real unstable mode. That is,  there is a real $\la>0$ and non-trivial $\vec{Z}\in Dom(\p_\xi \ch)$, so that \eqref{41} holds true. 
   \end{theorem}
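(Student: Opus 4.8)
View \eqref{41} as a Hamiltonian eigenvalue problem $\cj\ch\vec Z=\la\vec Z$ with $\cj:=\p_\xi$ skew-adjoint and $\ch$ self-adjoint; observe that $\ch$ is the Hessian at $(\psi,\phi)$ of the conserved energy-momentum functional $E_c:=\int\left(\tfrac12 u_\xi^2-\tfrac12 u^2v+\tfrac c2(u^2+v^2)\right)d\xi$, and that $\int u\,d\xi$, $\int v\,d\xi$ are Casimirs of $\cj$. Since a derivative always has zero mean, $\la\vec Z=\p_\xi\ch\vec Z$ is mean-free, so every eigenfunction with $\la\neq0$ is mean-free; on that subspace $\cj$ is invertible and the Hamiltonian--Krein index identity applies,
$$k_r+2k_c+2k_i^-=n(\ch)-n(D),$$
where $k_r,k_c,k_i^-$ count real positive, open-right-half-plane complex, and negative-Krein purely imaginary eigenvalues, $n(\ch)$ is the Morse index of $\ch$, and $D$ is a finite symmetric matrix assembled from the generalized kernel of $\cj\ch$ together with the Casimir directions $\nabla\!\int u,\nabla\!\int v$. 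Because the right-hand side is a fixed nonnegative integer, it suffices to prove $n(\ch)-n(D)=1$: this forces $k_r=1$, $k_c=k_i^-=0$, which is exactly the claim (one algebraically simple real unstable mode).

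\textbf{Computing $n(\ch)$ and $\dim\ker\ch$.} Since the $(2,2)$-entry of $\ch$ is multiplication by the positive constant $c$, completing the square in the $V$ variable gives $\langle\ch(U,V),(U,V)\rangle=c\,\|V-c^{-1}\psi U\|^2+\langle\cl_+U,U\rangle$; together with the bounded invertible change of variables $(U,V)\mapsto(U,V-c^{-1}\psi U)$ this shows, by Haynsworth (Schur) inertia additivity, that $n(\ch)=n(\cl_+)$ and $\dim\ker\ch=\dim\ker\cl_+$, where $\cl_+=\cl_--c^{-1}\psi^2=-\p_\xi^2+c-\tfrac{3\psi^2}{2c}$ is the Schur complement. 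Differentiating \eqref{2.5} yields $\cl_+\psi'=0$; since the profile \eqref{2.16} is a positive $L$-periodic function with a single maximum and a single minimum per period, $\psi'$ has exactly two zeros per period, so in the periodic spectrum $\mu_0<\mu_1\le\mu_2<\cdots$ of $\cl_+$ one has $0\in\{\mu_1,\mu_2\}$, hence $n(\cl_+)\in\{1,2\}$. Pinning down which value occurs, and establishing the nondegeneracy $\dim\ker\cl_+=1$ (equivalently, that the solution of $\cl_+y=0$ linearly independent from $\psi'$ is not $L$-periodic, i.e. the associated monodromy/drift coefficient is nonzero), is done using the explicit form \eqref{2.16} and the relations \eqref{2.27}--\eqref{2.31}: either by a regularized reduction-of-order computation of the second solution, or by a continuation argument in the modulus $\kappa$, anchored in a tractable limiting regime and propagated using the persistence of the simple zero eigenvalue of $\cl_+$.

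\textbf{The generalized kernel of $\cj\ch$ and $n(D)$.} A direct computation from \eqref{2.4}--\eqref{2.5} gives $\ch(\psi',\phi')=0$ (the translation mode). Differentiating the profile relations with respect to $c$ gives $\ch\,\p_c(\psi,\phi)=-(\psi,\phi)$ modulo a constant vector, hence $\cj\ch\,\p_c(\psi,\phi)=-(\psi',\phi')$: the translation mode has a Jordan partner $\p_c(\psi,\phi)$. One checks that the chain stops there (no $\vec w$ with $\cj\ch\vec w=\p_c(\psi,\phi)$) under a nondegeneracy condition that reduces to the nonvanishing of $\tfrac{d}{dc}$ of a momentum-type integral of the wave. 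Moreover, since the constants $(1,0),(0,1)$ lie in the range of $\ch$, the vectors $\ch^{-1}(1,0),\ch^{-1}(0,1)$ also lie in $\ker(\cj\ch)$ (the Casimir directions). Assembling $D$ from the $\p_c$-direction and these two Casimir directions, one evaluates its entries --- pairings of the type $\langle\ch^{-1}\nabla C_i,\nabla C_j\rangle$ and $\langle\ch^{-1}(\psi,\phi),\,\cdot\,\rangle$ --- explicitly from \eqref{2.16} and \eqref{2.27}--\eqref{2.31}, reads off $n(D)\in\{0,1\}$, and combines this with the value of $n(\cl_+)$ to conclude $n(\ch)-n(D)=1$.

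\textbf{Main obstacle.} The hard part is the explicit spectral and algebraic bookkeeping for $\cl_+$ and $D$: deciding whether $n(\cl_+)$ equals $1$ or $2$, proving $\dim\ker\cl_+=1$ for every $\kappa\in(0,1)$, and computing the sign data in the finite matrix $D$ --- all of which require nontrivial identities for Jacobi elliptic integrals built from \eqref{2.16} and \eqref{2.27}--\eqref{2.31}. By comparison, checking the abstract hypotheses of the index theorem (the Fredholm and spectral properties of $\cj\ch$, finiteness of $n(\ch)$, and the treatment of $\ker\cj=\mathrm{span}\{(1,0),(0,1)\}$ through the conserved masses) is routine.
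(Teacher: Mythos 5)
Your proposal reproduces the paper's high-level strategy: view \eqref{41} as a Hamiltonian pencil $\p_\xi\ch\vec Z=\la\vec Z$, apply the index count $k_r+2k_c+2k_i^-=n(\ch)-n(\cd)$, compute the two Morse indices, and deduce from $k_{Ham.}=1$ that $k_r=1$, $k_c=k_i^-=0$. Two of your observations improve on the paper's presentation: the Schur-complement/Haynsworth argument gives $n(\ch)=n(\cl_+)$ and $\dim\ker\ch=\dim\ker\cl_+$ in one stroke (test vectors $(f,c^{-1}\psi f)$ give $n(\ch)\ge n(\cl_+)$, the quadratic-form identity in Proposition~\ref{p:11} gives the reverse), whereas the paper proves $n(\ch)\le 2$ and $n(\ch)\ge 2$ by separate arguments, the second via a continuity argument for the family $\cl_\mu$; and your remark that $k_{Ham.}=1$ \emph{forces} a single simple real unstable mode makes explicit what the paper's last sentence leaves implicit. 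Your spanning set for the generalized kernel (the $\p_c$ Jordan partner plus the two Casimir preimages) is equivalent to the paper's basis $e_1,e_2,e_3$ from Proposition~\ref{prop:21}.

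The genuine gap is that the proposal never determines the quantitative facts on which the theorem actually rests. You leave open whether $n(\cl_+)$ equals $1$ or $2$, whether $\dim\ker\cl_+=1$ for every $\kappa\in(0,1)$, and what $n(\cd)$ is; moreover the assertion ``reads off $n(D)\in\{0,1\}$'' is unjustified as stated, since a priori a $3\times 3$ symmetric matrix can have $n(\cd)\in\{0,1,2,3\}$, reduced only to $n(\cd)\le 2$ once $n(\ch)=2$ is established. These are precisely the content of the theorem, and the paper resolves them concretely: Proposition~\ref{p:10} shows $\dim\ker\cl_+=1$ by constructing the second, non-periodic Wronskian solution $\vp$ of $\cl_+y=0$ (Lemma~\ref{le:po}) and checking $\vp'(0)\neq\vp'(L)$, which reduces to the nonvanishing of the elliptic integral $A_2(\kappa)$; Proposition~\ref{p:9} shows $n(\cl_+)=2$ via the Neves/Floquet monodromy criterion (the drift $\theta>0$, verified numerically at sample parameters and propagated to all $c>4\pi^2/L^2$ by isoinertiality, Theorem~\ref{thm3.1}); and Section~\ref{sec:6} assembles $\cd$ from \eqref{l:1}--\eqref{l:6}, \eqref{es1}, and \eqref{l:102}--\eqref{l:116}, evaluates $\det\cd$ in Mathematica, and finds $\det\cd<0$, which together with $n(\cd)\le n(\ch)=2$ forces $n(\cd)=1$ and hence $k_{Ham.}=1$. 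Without pinning down these values the argument is an outline that assumes its conclusion. Note also that the pivotal inputs ($\theta>0$, $A_2(\kappa)<0$, $\det\cd<0$) are in the paper checked only numerically; a self-contained analytic proof would have to supply closed-form sign information for these elliptic-integral quantities.
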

 The plan for our work is as follows. In Section \ref{sec:2}, we present a detailed description  of the explicit solutions, in terms of various parameters. The results therein are well summarized in Proposition \ref{prop:10}. Next, in Section \ref{sec:3}, we develop the local well-posedness theory for the DSW $L^2\times L^2$, the results of which are in Theorem \ref{th:wp}. In this section, we present many relevant estimates in periodic KdV Bourgain spaces, so it serves as a preliminary step for the more complicated situation for persistence of the solution in $H^1\times L^2$. This is done in full detail in Section \ref{sec:4}, wherein in addition to the technical results from Section \ref{sec:3}, we introduce  the normal form decomposition and prove the relevant estimates. In Section \ref{sec:5}, we give some preparatory results, needed for the stability of the waves. Specifically, we establish  that the Morse index of $\cl_+, \ch$ is two, and also, we prove that the kernel of both is one dimensional. This is followed  by the calculation of the generalized kernel of $\p_x \ch$, which allows us to compute  the elements of a matrix $\cd$, which comes up in the definition of the Hamiltonian instability index. A short index for the Hamiltonian index, follows by the construction of the inverse $\cl_+^{-1}$  is done in Section \ref{sec:6}. This allows to finally compute the elements of the matrix $\cd$ in terms of explicit integrals of elliptic Jacobi functions. This is done using Mathematica, as the formulas tend to get excessively long. In a nutshell, the graphs of $det(\cd)$ shows that $\cd$ has exactly one negative eigenvalue for all values of the parameters, which allows one to infer spectral instability of the waves.

 \section{Construction of the solutions}
 \label{sec:2} 
 In this section, we provide a detailed derivation of the solutions introduced in Proposition \ref{prop:10} inlcuding the relevant parametrizations, which be helpful in the sequel. To this end, we start with the equation \eqref{2.7}. Suppose now that $27 F_1 ^2 - 4 c^4 < 0$. Clearly, in this case we
 have four real roots for the equation $U (\psi) = 0$.
 
 We assume that $F_1 < 0$ in order that the periodic solution does not pass through zero.
 Then, an element of the phase portrait of (\ref{2.7}) is depicted on Fig. 1,
 which justify existence of positive periodic solutions $\eta_3 \leq \psi (\xi) \leq \eta_4$. Then, an element of the phase portrait of (\ref{2.7}) is depicted on Fig. 1,
 which justify existence of positive periodic solutions $\eta_3 \leq \psi (\xi) \leq \eta_4$.

 \begin{figure}[ht]
 	\begin{tikzpicture}
 		\draw [help lines, ->] (-6.5,0)--(6.5,0);
 		\draw [help lines, ->] (-6.5,-4)--(6.5,-4);
 		\draw [help lines, ->] (0,-6.1)--(0,3.0);
 		
 		\draw (-4,1) .. controls (-3,0) and (-2,-2) .. (0,0)
 		.. controls (1,1) and (2,2) .. (3,0)
 		.. controls (3.5,-1) and (4.5,-3) .. (6,2);
 		
 		\node [below right] at (6.5,0) {$u$}; \node [below right] at
 		(6.5,-4) {$u$}; \node [left] at (0,3.0) {$U (\psi)$}; \node [left]
 		at (0,-1) {$u'$};
 		
 		\node [below left] at (3,0) {$\eta_3$};
 		\node [below left] at (3,-4) {$\eta_3$};
 		
 		\node [below right] at (5.3,-4) {$\eta_4$};
 		\node [below right] at (5.3,0) {$\eta_4$};
 		
 		\node [below right] at (-3.5,0) {$\eta_1$};
 		
 		\node [above right] at (-6.3,-4) {$F_1 < 0$};
 		\node [above right, red] at (3.3,0) {$E=0$};
 		
 		\draw [dashed] (3,0)--(3,-4);
 		\draw [dashed] (5.3,0)--(5.3,-4);
 		\draw [red, line width=1.2] (3,0) -- (5.3,0);
 		
 		\draw [red, line width=1.2] (4.15,-4) ellipse (1.15 and 0.8);
 		
 		\draw[line width=0.2mm, -{Stealth[red]}] (4.15,-3.2) -- (4.15001,-3.2) node[midway, above=0.1pt] {};
 		
 		\draw[line width=0.2mm, -{Stealth[reversed,red]}] (4.15,-4.8) -- (4.15001,-4.8) node[midway, above=0.1pt] {};
 		
 	\end{tikzpicture}
 	\caption{The positive periodic solution of (\ref{2.7}).}
 \end{figure}
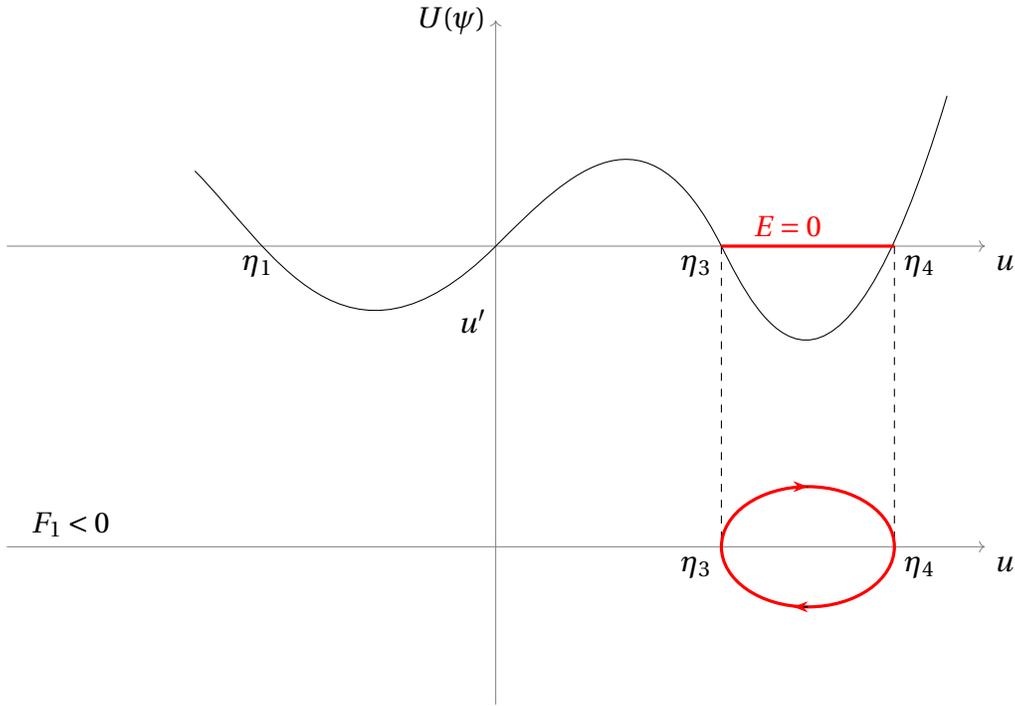
 Next, we are looking for explicit formulas for periodic solutions
 $\psi (\xi)$ in the period domain $[0, L]$ subject to the
 conditions $\psi (0) = \eta_4$ and $\psi (\sigma) = \eta_3$ for
 some $\sigma \in (0, L)$.
 We rewrite (\ref{2.7}) as follows 
 \begin{equation}
 	\label{2.8}
 	\frac{1}{2} (\psi')^2  =  - U (\psi) = \frac{1}{8 c}(\eta_4 -
 	\psi)(\psi - \eta_3) \psi (\psi - \eta_1).
 \end{equation}
 Clearly,
 \begin{align}
 	\label{2.9}
 	& \eta_1  + \eta_3 + \eta_4 = 0, \nonumber \\
 	& \eta_1 \eta_3  + \eta_1 \eta_4 + \eta_3 \eta_4 = - 4 c^2,  \\
 	&  \eta_1 \eta_3 \eta_4 = 8 c F_1, \nonumber
 \end{align}
 From (\ref{2.8}) one gets
 \begin{equation}
 	\label{2.10}
 	\frac{d \psi}{\sqrt{(\eta_4 - \psi)(\psi - \eta_3)\psi (\psi - \eta_1)}} = \frac{1}{2 \sqrt{c}} d \xi.
 \end{equation}
 On the other hand, the substitution
 \begin{equation}
 	\label{2.11}
 	sn^2 (y, \kappa) = \frac{(\eta_3 - \eta_1)(\eta_4 - \psi)}{(\eta_4 -\eta_3)(\psi - \eta_1)}
 \end{equation}
 transforms the left hand side of (\ref{2.10}) into (Bird \& Friedman \cite{BirdFriedman}, formula 257.00)
 \begin{equation}
 	\label{2.12} \frac{d \psi}{\sqrt{(\eta_4 - \psi)(\psi - \eta_3) \psi (\psi - \eta_1)}} = a d y,
 \end{equation}
 where
 \begin{equation}
 	\label{2.13}
 	a = \frac{2}{\sqrt{\eta_4 (\eta_3 - \eta_1)}}, \quad
 	\kappa^2 =  - \frac{\eta_1}{\eta_4} \frac{(\eta_4 - \eta_3)}{(\eta_3 - \eta_1)} .
 \end{equation}
 Both relations (\ref{2.10}) and (\ref{2.12}) give
 $
 y = \frac{\xi}{2 a \sqrt{c}},
 $
 and hence the final formula, 
 \begin{equation}
 	\label{2.14}
 	\psi (\xi) = \frac{\eta_4 (\eta_3 - \eta_1) + \eta_1 (\eta_4 - \eta_3) sn^2 (\frac{\xi}{2 a \sqrt{c}}, \kappa)}
 	{(\eta_3 - \eta_1) +  (\eta_4 - \eta_3) sn^2 (\frac{\xi}{2 a \sqrt{c}}, \kappa)}.
 \end{equation}
 Introduce the auxiliary parameter 
 \begin{equation}
 	\label{2.15}
 	\beta^2 : = - \kappa^2 \frac{\eta_4}{\eta_1} > 0.
 \end{equation}
 Then,  \eqref{2.14} can be rewritten in the more compact form 
 \begin{equation}
 	\label{2.16}
 	\psi (\xi) = \eta_4 \frac{ dn^2 (\frac{\xi}{2 a \sqrt{c}}, \kappa)}{1 + \beta^2 sn^2 (\frac{\xi}{2 a \sqrt{c}}, \kappa)}, \quad \psi (0) = \eta_4 ,
 \end{equation}
 where $sn, cn,  dn$ are the Jacobi  elliptic functions. 
 Since $\psi (\xi)$ is required to have a minimal period we conclude that
 $ L = 2 a \sqrt{c} (2 {\rm K} (\kappa))$,
 where ${\rm K} (\kappa)$ is the complete elliptic integral of first kind.
  
 We are interested mainly in $\psi (\xi)$ and its properties.
 Combining the first two identities from (\ref{2.9}) yields
 \begin{equation}
 	\label{2.18}
 	\eta_3 ^2 + \eta_4 ^2 + \eta_3 \eta_4 = 4 c^2, 
 \end{equation}
 from where we infer that
 \begin{equation}
 	\label{2.19}
 	0 < \eta_3 < \frac{2 c}{\sqrt{3}} < \eta_4 < 2 c.
 \end{equation}
 Furthermore,  we can obtain also from (\ref{2.9}) the expressions of $\eta_1$ and $\eta_3$ as functions of $c$ and $\eta_4$ as follows
 \begin{align}
 	\label{2.20}
 	& \eta_1  = -\frac{1}{2} \left(\sqrt{16 c^2 - 3 \eta_4 ^2} + \eta_4 \right), \nonumber \\
 	& \eta_3   = \frac{1}{2} \left(\sqrt{16 c^2 - 3 \eta_4 ^2} - \eta_4 \right),  \\
 	& F_1 = - \frac{\eta_4}{8c} (4c^2 - \eta_4 ^2) \nonumber
 \end{align}
 Using these expressions we find that
 \begin{equation}
 	\label{2.21}
 	\kappa^2 = \frac{\sqrt{16c^2 \eta_4 ^2 - 3 \eta_4 ^4} + 3 \eta_4 ^2 - 8c^2}{2 \sqrt{16c^2 \eta_4 ^2 - 3 \eta_4 ^4}}. 
 \end{equation}
 The periodic solution $\psi$ has a fundamental period
 \begin{equation}
 	\label{2.22}
 	T_{\psi} = 2g \sqrt{c} (2 {\rm K} (\kappa)) = \frac{8 \sqrt{c} \, {\rm K} (\kappa)}{\sqrt{\eta_4(\eta_3 - \eta_1)}} =
 	\frac{8 \sqrt{c} \, {\rm K} (\kappa)}{(16c^2 \eta_4 ^2 - 3 \eta_4 ^4)^{1/4}} .
 \end{equation}
 It is follows in a straightforward manner from  (\ref{2.18}) that if $\eta_4 \to \frac{2 c}{\sqrt{3}}$, then $\eta_3 \to \frac{2 c}{\sqrt{3}}$, and hence,
 $\kappa = 0$ and $T_{\psi_c} \to \frac{2 \pi}{\sqrt{c}}$. On the other hand, if $\eta_4 \to 2 c$, then $\eta_3 \to 0$, $\kappa = 1$ and
 $T_{\psi_c} \to \infty$ (the separatrix solution).
 
 Since $\eta \in (\frac{2c}{\sqrt{3}}, 2 c) \to T_{\psi} (\eta)$
 is an increasing function (see the theorem below), it follows that
 $$
 T_{\psi} > \frac{2 \pi}{\sqrt{c}}.
 $$
 Next step is to build a smooth curve $c \to \psi$ of periodic solutions of (\ref{2.8}) with a minimal period.
 
 Let $c > 0$ be such that $\sqrt{c} > \frac{2 \pi}{L}$. Another result of the fact that
 the mapping $\eta \in (\frac{2c}{\sqrt{3}}, 2 c) \to T_{\psi} (\eta)$ is strictly
 increasing is the existence of a unique $\eta \in (\frac{2c}{\sqrt{3}}, 2 c)$ such that $T_{\psi} (\eta) = L$.
 Therefore, from (\ref{2.20}) and
 (\ref{2.21}) we get an explicit solution for (\ref{2.8}) depending only on $\eta = \eta_4$.
 
 The choice $c \to \eta (c)$ is smooth.
 \begin{thm}
 	\label{thm2.1}
 	Let $L > 0$ be arbitrary, but fixed. Consider $c_0 > \frac{4 \pi^2}{L^2}$ and unique $\eta_{4,0} = \eta (c_0) \in (\frac{2c_0}{\sqrt{3}}, 2 c_0)$
 	such that $T_{\psi_{c_0}} (\eta_{4,0} ) = L$. Then
 	
 	(1) There exists an interval $V_{c_0}$ around $c_0$, an interval $W_{\eta_{4,0}}$ around $\eta_{4,0}$ and a unique smooth function
 	$\Gamma : V_{c_0} \to W_{\eta_{4,0}}$, such that $\Gamma (c_0) = \eta_{4,0}$ and
 	\begin{equation}
 		\label{2.23}
 		\frac{8 \sqrt{c} \, {\rm K} (\kappa)}{(16c^2 \eta_4 ^2 - 3 \eta_4 ^4)^{1/4}} = L
 	\end{equation}
 	with $\eta_4 = \eta_4 (c) = \Gamma (c)$ and $\kappa^2 = \kappa^2 (c) \in (0, 1)$ given by (\ref{2.21});
 	
 	(2) The periodic solution $\psi (., \eta_4)$ determined by $\eta_4$ has a fundamental period $L$ and satisfies (\ref{2.8}). Moreover,
 	the mapping $c \in  V_{c_0} \to \psi $ is a smooth function;

 	(3) $V_{c_0}$ can be extended to $(\frac{4\pi^2}{L^2}, + \infty)$.
 \end{thm}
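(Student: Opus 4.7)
The proof of parts (1) and (2) is essentially a direct application of the implicit function theorem, with the strict monotonicity of $T_\psi$ in $\eta_4$ serving as the non-degeneracy input. Define
$$
F(c,\eta_4) := \frac{8\sqrt{c}\,{\rm K}(\kappa(c,\eta_4))}{(16c^2\eta_4^2 - 3\eta_4^4)^{1/4}} - L
$$
on the open set $\{(c,\eta_4): c>0,\ \eta_4\in(2c/\sqrt{3},2c)\}$, where $\kappa^2=\kappa^2(c,\eta_4)$ is given by \eqref{2.21}. On this domain the quartic discriminant $16c^2\eta_4^2 - 3\eta_4^4$ is strictly positive (this is exactly the condition $\eta_4<2c$) and $\kappa\in(0,1)$, so $F\in C^\infty$. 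By hypothesis $F(c_0,\eta_{4,0})=0$, and strict monotonicity of $\eta_4\mapsto T_\psi(\eta_4)$ (the technical input deferred to the next statement) gives $\partial_{\eta_4}F(c_0,\eta_{4,0})\neq 0$. The IFT then produces the neighborhoods $V_{c_0}, W_{\eta_{4,0}}$ and the unique $C^\infty$ function $\Gamma:V_{c_0}\to W_{\eta_{4,0}}$ with $\Gamma(c_0)=\eta_{4,0}$ and $F(c,\Gamma(c))=0$, which is part (1). For part (2), once $\eta_4(c):=\Gamma(c)$ is smooth, $\kappa^2(c)$ is smooth via \eqref{2.21}, and hence the map $c\mapsto \psi(\cdot;c,\eta_4(c))$ defined by \eqref{2.16} is smooth in $c$; by construction it has fundamental period $L$ and satisfies \eqref{2.8}.

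For (3), the extension to all of $(4\pi^2/L^2,+\infty)$ follows by combining the local IFT with a global selection principle. As indicated in the discussion preceding the theorem, for every fixed $c>4\pi^2/L^2$ the continuous map $\eta_4\in(2c/\sqrt{3},2c)\mapsto T_\psi(\eta_4)$ is strictly increasing, with limits $2\pi/\sqrt{c}<L$ as $\eta_4\downarrow 2c/\sqrt{3}$ and $+\infty$ as $\eta_4\uparrow 2c$. Hence for each such $c$ there is a unique $\eta_4(c)\in(2c/\sqrt{3},2c)$ with $T_\psi(\eta_4(c))=L$. Applying the IFT locally at every point $c\in(4\pi^2/L^2,+\infty)$ produces a smooth local branch, and by the global uniqueness of $\eta_4(c)$ these local branches must agree wherever they overlap, so they assemble into a single smooth function $\Gamma$ on the whole half-line. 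Smoothness is preserved throughout since the non-degeneracy $\partial_{\eta_4}F\neq 0$ persists as a consequence of strict monotonicity.

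The main technical obstacle is precisely this monotonicity claim $\frac{d}{d\eta_4}T_\psi(\eta_4)>0$ for $\eta_4\in(2c/\sqrt{3},2c)$, which is where the real work of the theorem lies; everything else is soft once it is in hand. I would prove it either by differentiating the elliptic integral representation \eqref{2.22} in $\eta_4$ (with $c$ fixed, so that $\eta_1,\eta_3$ are determined by \eqref{2.20}) and reducing the resulting expression to a combination of ${\rm K}(\kappa),{\rm E}(\kappa)$ whose sign can be checked using standard identities for complete elliptic integrals, or, more conceptually, by invoking a general monotonicity criterion for the period function of a Newtonian system $\tfrac12(\psi')^2+U(\psi)=E$ with quartic $U$ possessing a single positive potential well, for example the Chicone or Chow-Wang criterion based on convexity properties of $U$ along the orbit. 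Either route is a computation of standard type but requires care in tracking the two-parameter family structure forced by \eqref{2.9}.
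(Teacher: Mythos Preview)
Your proposal is correct and follows the same approach as the paper: apply the implicit function theorem with $\partial_{\eta_4} T_\psi > 0$ as the non-degeneracy condition, then globalize via uniqueness. The paper carries out the monotonicity computation along your first suggested route---direct differentiation of \eqref{2.22}, splitting on the sign of $8c^2 - 3\eta_4^2$ and, in the case where this quantity is positive, reducing to the elementary inequality $\frac{1}{\kappa}\frac{d{\rm K}}{d\kappa} > {\rm K}$.
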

 {\bf Proof.} We borrow the idea for the proof from \cite{PavaNatali}(see also \cite{Natali1}).  Define an open set
 $$
 \mathcal{V} : = \left\{(\eta, c) \in \mathbb{R}^2, \quad c > \frac{4 \pi^2}{L^2}, \quad  \eta \in \left(\frac{2c}{\sqrt{3}}, 2 c \right) \right\}
 $$
 and the smooth real function $G: \mathcal{V} \to \mathbb{R}$ given by
 $$
 G (\eta, c) = \frac{8 \sqrt{c} \, {\rm K} (\kappa)}{(16c^2 \eta_4 ^2 - 3 \eta_4 ^4)^{1/4}},
 $$
 where $\kappa \in (0, 1)$ is defined in (\ref{2.21}). Denote for short $r := 16c^2 \eta_4 ^2 - 3 \eta_4 ^4$.
 Firstly, we obtain that
 \begin{equation}
 	\label{2.24}
 	\frac{d \kappa}{ d \eta} (\eta, c) = \frac{32 c^4 \eta}{\kappa \sqrt{r^3}} > 0
 \end{equation}
 Direct calculations give
 \begin{equation}
 	\label{2.25}
 	G_{\eta} (\eta, c) = \frac{8 \sqrt{c} \eta}{\sqrt[4]{r^7}} \left\{ 32 c^4 \frac{1}{\kappa} \frac{d {\rm K}}{ d \kappa} - \sqrt{r} {\rm K} (8 c^2 - 3 \eta^2)\right\}.
 \end{equation}
 For $\eta \in \left(\frac{2c}{\sqrt{3}}, 2 c \right)$ we consider two possibilities: 1) $\eta \in \left[ \frac{2\sqrt{2}c}{\sqrt{3}}, 2 c \right) $ and
 2) $\eta \in \left(\frac{2c}{\sqrt{3}},  \frac{2\sqrt{2}c}{\sqrt{3}} \right)$.
 
 In the first possibility $8 c^2 - 3 \eta^2 \leq 0$, and hence, $G_{\eta} > 0$.

 In the second possibility, the inequality holds
 $$
 (8 c^2 - 3 \eta^2) \sqrt{r} < 32 c^4,
 $$
 from which follows the estimate
 $$
 \frac{\sqrt[4]{r^7}}{8 \eta \sqrt{c}} G_{\eta} (\eta, c) =
 \left\{ 32 c^4 \frac{1}{\kappa} \frac{d {\rm K}}{ d \kappa} - \sqrt{r} {\rm K} (8 c^2 - 3 \eta^2)\right\} >
 32 c^4 \left( \frac{1}{\kappa} \frac{d {\rm K}}{ d \kappa} - {\rm K} \right) .
 $$
 Since $\frac{1}{\kappa} \frac{d {\rm K}}{ d \kappa} - {\rm K} > 0$, we get $G_{\eta} > 0$.
 
 Hence, due to Implicit Function Theorem there exists a smooth function $\Gamma$ in $V_{c_{0}}$, such that
 $G (\Gamma (c), c) = L$ for all $c \in V_{c_{0}}$. As $c_0$ is arbitrary in $(\frac{4\pi^2}{L^2}, + \infty)$, $\Gamma$ can be extended to the interval
 $(\frac{4\pi^2}{L^2}, + \infty)$.

 \begin{cor}
 	\label{cor2.1}
 	The mapping $\Gamma : V_{c_0} \to W_{\eta_{4,0}}$ defined above is a strictly increasing function.
 \end{cor}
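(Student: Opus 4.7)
The plan is to apply the Implicit Function Theorem to the defining relation $G(\Gamma(c),c)=L$ established in the proof of Theorem \ref{thm2.1}. Differentiating in $c$ gives $\Gamma'(c)=-G_c(\Gamma(c),c)/G_\eta(\Gamma(c),c)$, and since the positivity $G_\eta>0$ on $\mathcal V$ has already been verified there, the corollary reduces to showing $G_c<0$ on $\mathcal V$. I will do this by a direct logarithmic differentiation that, after a brief algebraic rearrangement, exhibits $G_c$ as a sum of two manifestly negative contributions.

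Taking the logarithmic derivative of $G=8\sqrt{c}\,{\rm K}(\kappa)/r^{1/4}$, with $r=16c^2\eta^2-3\eta^4$ and $\kappa=\kappa(\eta,c)$ given by (\ref{2.21}), yields
$$\frac{G_c}{G}=\frac{1}{2c}+\frac{{\rm K}'(\kappa)}{{\rm K}(\kappa)}\,\kappa_c-\frac{r_c}{4r}=\frac{1}{2c}-\frac{8c}{16c^2-3\eta^2}+\frac{{\rm K}'(\kappa)}{{\rm K}(\kappa)}\,\kappa_c,$$
where I used $r_c=32c\eta^2$ to simplify $r_c/(4r)=8c/(16c^2-3\eta^2)$. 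Combining the first two terms produces the clean identity $\tfrac{1}{2c}-\tfrac{8c}{16c^2-3\eta^2}=\tfrac{-3\eta^2}{2c(16c^2-3\eta^2)}$, which is strictly negative on $\mathcal V$ because the bound $\eta<2c$ from (\ref{2.19}) guarantees $16c^2-3\eta^2>0$.

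For the remaining term, I would differentiate $\kappa^2=\tfrac12+\tfrac{3\eta^2-8c^2}{2\sqrt r}$ directly in $c$; after clearing the $\sqrt r$ factors one obtains $\partial_c(\kappa^2)=-8c\bigl[r+(3\eta^2-8c^2)\eta^2\bigr]/r^{3/2}$, and invoking the algebraic identity $r+(3\eta^2-8c^2)\eta^2=\eta^2(16c^2-3\eta^2)+\eta^2(3\eta^2-8c^2)=8c^2\eta^2$ collapses this to $\partial_c(\kappa^2)=-64c^3\eta^2/r^{3/2}<0$. Thus $\kappa_c<0$, and since ${\rm K}$ is strictly increasing on $(0,1)$ so that ${\rm K}'/{\rm K}>0$, the third term is also strictly negative. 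Summing the two strictly negative contributions gives $G_c<0$, hence $\Gamma'(c)>0$ on all of $V_{c_0}$. The only nontrivial moment in the argument is spotting the miraculous cancellation $r+(3\eta^2-8c^2)\eta^2=8c^2\eta^2$ in $\partial_c(\kappa^2)$; once this is noticed, the monotonicity follows without any delicate estimates on $\rm K$ and $\rm E$ beyond the positivity of ${\rm K}'$.
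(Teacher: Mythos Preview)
Your proof is correct and follows essentially the same approach as the paper: both differentiate the implicit relation $G(\Gamma(c),c)=L$, invoke $G_\eta>0$ from Theorem~\ref{thm2.1}, and then show $G_c<0$ by computing $\kappa_c<0$ and organizing $G_c$ as a sum of two negative terms. Your logarithmic differentiation produces the decomposition $G_c/G = -\tfrac{3\eta^2}{2c(16c^2-3\eta^2)} + \tfrac{K'}{K}\kappa_c$, which is algebraically identical to the paper's expression $\partial_c G = 4\bigl(2cr\,K'\kappa_c - 3\eta^4 K\bigr)/(\sqrt{c}\,r^{5/4})$ after dividing by $G$; the only difference is cosmetic.
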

 {\bf Proof.} From $G (\Gamma (c), c) = L$, fulfilled for all $c \in V_{c_0}$ we get
 $$
 \frac{d \Gamma (c)}{ d c} = - \frac{\partial_c G}{\partial_{\eta} G}.
 $$
 We already know that $G_{\eta} > 0$. First, we find that
 $$
 \frac{d \kappa (\eta, c)}{ d c} = - \frac{24 c \eta^2}{\kappa \sqrt{r^3}} \left(\eta^2 - \frac{4 c^2}{3} \right) < 0.
 $$
 Next, we obtain
 $$
 \partial_c G(\eta, c) = 4 \frac{2 \,c \,r \frac{d {\rm K} (\kappa)}{d \kappa} \frac{d \kappa}{d c} - 3 \eta^4 {\rm K} (\kappa)}{\sqrt{c} \sqrt[4]{r^5}} < 0,
 $$
 from where follows the claim $\frac{d \Gamma (c)}{ d c} > 0$.

 \begin{cor}
 	\label{cor2.2}
 	For $c \in (\frac{4 \pi^2}{L^2}, +\infty)$ and $\eta_4 = \eta_4 (c)$ we have that the modulus function
 	\begin{equation}
 		\label{2.26}
 		\kappa (c) = \sqrt{\frac{3 \eta_4 ^2 -8 c^2 + \sqrt{16 c^2 \eta_4 ^2 - 3 \eta_4 ^4}}{2 \sqrt{16 c^2 \eta_4 ^2 - 3 \eta_4 ^4} } }
 	\end{equation}
 	is an increasing function with respect to $c$, that is, $d \kappa/ d c > 0$.
 \end{cor}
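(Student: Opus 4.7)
My plan is to reduce this assertion to a direct monotonicity check on the explicit $\kappa$-parametrization supplied by Proposition \ref{prop:10}, namely
$$c(\kappa) = \frac{16\, K^2(\kappa)\sqrt{1-\kappa^2+\kappa^4}}{L^2}.$$
Since Corollary \ref{cor2.1} together with \eqref{2.26} produces a smooth bijective correspondence $c \leftrightarrow \kappa$, it will suffice to verify $dc/d\kappa > 0$ on $\kappa \in (0,1)$; the desired inequality $d\kappa/dc > 0$ then follows from the inverse function theorem.

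The first step is to compute $dc/d\kappa$ via logarithmic differentiation, employing the standard identity $K'(\kappa) = [E(\kappa) - (1-\kappa^2)K(\kappa)]/[\kappa(1-\kappa^2)]$. After routine algebra, I expect to find that $dc/d\kappa$ has the same sign as
$$f(\kappa) := 2E(\kappa)(1-\kappa^2+\kappa^4) - K(\kappa)(1-\kappa^2)(2-\kappa^2).$$
One checks $f(0) = \pi - \pi = 0$, so the task reduces to showing $f'(\kappa) > 0$ on $(0,1)$. Differentiating $f$ and invoking $E'(\kappa) = (E-K)/\kappa$ together with the above expression for $K'$, and collecting the coefficients of $E$ and $K$ separately, I anticipate a striking cancellation that simplifies $f'$ to the compact form
$$f'(\kappa) = 5\kappa\bigl[(1-\kappa^2)K(\kappa) - (1-2\kappa^2)E(\kappa)\bigr].$$

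The main obstacle will be establishing positivity of the bracket $(1-\kappa^2)K - (1-2\kappa^2)E$ for all $\kappa \in (0,1)$, since the factor $(1-2\kappa^2)$ changes sign at $\kappa = 1/\sqrt{2}$, precluding any trivial sign analysis. The route I plan to take is to fold the two terms under a single integral using the standard representations of $K$ and $E$:
$$(1-\kappa^2)K(\kappa) - (1-2\kappa^2)E(\kappa) = \int_0^{\pi/2}\frac{\kappa^2\bigl[1+(1-2\kappa^2)\sin^2\theta\bigr]}{\sqrt{1-\kappa^2\sin^2\theta}}\,d\theta.$$
The integrand is manifestly positive: when $\kappa^2 \leq 1/2$ the bracket $1+(1-2\kappa^2)\sin^2\theta \geq 1$, whereas when $\kappa^2 > 1/2$ it is bounded below uniformly in $\theta$ by $1 - (2\kappa^2-1) = 2(1-\kappa^2) > 0$. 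Thus $f'(\kappa) > 0$ on $(0,1)$, and combined with $f(0)=0$ this yields $f > 0$ on $(0,1)$, delivering the result.
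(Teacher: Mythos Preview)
Your argument is correct and follows the same route as the paper: invert the correspondence and verify $dc/d\kappa>0$ for the explicit formula $c(\kappa)=\dfrac{16K^2(\kappa)\sqrt{1-\kappa^2+\kappa^4}}{L^2}$, then invoke the inverse function theorem. The paper, however, simply asserts $dc/d\kappa>0$ immediately after stating the corollary without any computation, whereas you supply a complete verification --- the reduction to $f(\kappa)=2E(\kappa)(1-\kappa^2+\kappa^4)-K(\kappa)(1-\kappa^2)(2-\kappa^2)$, the derivative identity $f'(\kappa)=5\kappa\bigl[(1-\kappa^2)K-(1-2\kappa^2)E\bigr]$, and the integral representation showing the bracket is positive are all correct and fill the gap the paper leaves open.
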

 Next, we present explicit formulas relating  $\eta_1, \eta_3, \eta_4$ and $c$ in terms of the modulus $\kappa \in (0, 1)$ and the period $L$.
 
 From (\ref{2.21}) and (\ref{2.23}) we obtain 
 $
 c = \frac{16 {\rm K}^2 (\kappa) \sqrt{\kappa^4 - \kappa^2 + 1}}{L^2}.
 $
  Denoting
 $h (\kappa) := 4 \sqrt{\kappa^4 - \kappa^2 + 1}$ this gives
 $$
 	c = \frac{4 {\rm K}^2 (\kappa)}{L^2} h (\kappa) > 0, \qquad \frac{d c}{d \kappa} > 0,
$$
which is \eqref{2.27}. 
 Next, again from (\ref{2.21}) we get
 $$
 	 \eta_4 = \frac{8 \sqrt{2} {\rm K}^2 (\kappa)}{\sqrt{3} L^2} \sqrt{h (\kappa)[h (\kappa) + 2 (2\kappa^2 -1)]}, 
 $$
 which is \eqref{2.28}. 
 From (\ref{2.15}) and \eqref{2.20}, we have
$$
 	\beta^2 = 2 \kappa^2 \frac{\sqrt{h (\kappa) + 2(2 \kappa^2 -1)}}{\sqrt{h (\kappa) + 2(2 \kappa^2 -1)} + \sqrt{3} \sqrt{h (\kappa) - 2(2 \kappa^2 -1)}},
$$
 which is of course \eqref{2.31}.

 \section{Well-posedness of the DSW system} 
 \label{sec:3} 
We start with some preliminaries. First, it is clear that for well-posedness purposes, we can rescale the interval $(0,L)$ to be of  length $2\pi$. So, we assume that in this section $L=2\pi$ and we henceforth denote ${\mathbb T}$ to be torus $[0,2\pi]$, equipped with the normalized Lebesgue measure. 
\subsection{Preliminaries}
 Given $f\in L^2(\mathbb{T})$, we denote $f_n$ to be its Fourier coefficient.  Given $u \in C^0_t L^2_x \cap L^2_{t,x}$, we denote the space-time Fourier transform of $u$ by $\wt{u}(\tau, k)$.  We denote the Fourier inverse by $\cf^{-1}_{\tau,k}\left[\wt{u}(\tau,k)\right]$.
 
 Given two positive quantities $A, B$, $A\lesssim B$ means that $A\leq C B$ for some $C>0$.  The relation $\gtrsim$ is analogously defined.  If $A\lesssim B$ and $A \gtrsim B$, we write $A\sim B$.   $A\ll B$ (respectivly $A \gg B$) is defined similar to the relation $\lesssim$ (resp, $\gtrsim$), except that the implicit constant $C$ is assumed to be much larger than 1.  For any real number $k$, $\lan{k} := (1+k^2)^{\f{1}{2}}$.  Given any real numbers $s, b$, fractional differential operators $J_x^s$ or $J_t^b$ are defined as 
 $$
 J_x^s f = \cf^{-1}_k \left[ \lan{k}^{s} f_k\right], \ \ \ 
 J_t^b u = \cf^{-1}_{\tau,k}\left[ \lan{\tau}^{s} \wt{u}(\tau,k)\right].
 $$
 For any real numbers $s,b$, we define the KdV Bourgain space $X^{s,b}$ via the norm:
 $$
 \n{u}{X^{s,b}} = \n{\lan{k}^s \lan{\tau-k^3}^{b} \wt{u}(\tau,k)}{L^2_\tau \ell^2_k}
 $$
 For KdV Bourgain spaces,  the following embeddings are known:
 $$
 X^{0,\f{1}{3}} \hookrightarrow L^4_{t,x}, \quad X^{0+, \f{1}{2}+} \hookrightarrow L^6_{t,x}, \quad X^{0,\f{1}{2}+} \hookrightarrow L^\infty_t L^2_x
 $$
 as well as the trivial identity, $X^{0,0} = L^2_{t,x}$. 
 
 We now define functional spaces for the solution $(u,v)$.  The space for $u$ is represented by $X^{s,\f{1}{2}} \cap H^s_x L^1_{\tau}$ and the space for $v$ is represented by $H_t^{\f{1}{2}}L^2_x \cap L^2_x L^1_\tau$.   The space for $(u,v)$ is 
 $$
 Y:= \left(X^{s,\f{1}{2}} \cap H^s_x L^1_{\tau}\right) \times \left(H_t^{\f{1}{2}}L^2_x \cap L^2_x L^1_\tau\right).
 $$
  The norm of $Y$ is defined by
 $$
 \n{(u,v)}{Y} = \n{ u}{X^{s,\f{1}{2}}} + \n{\lan{k}^s \wt{u}}{\ell^2_k L^1_\tau} +\n{ J_t^{\f{1}{2}} v}{L^2_{t,x}} + \n{\wt{v}}{\ell^2_k L^1_\tau}. 
 $$
 The spaces $X^{s,\f{1}{2}}$ and $H^{\f{1}{2}}_t L^2_x$ barely fails to embed within the target space, which is $C^0_t H^s_x$ and $C^0_t L^2_x$.  In such settings, it is typical to intersect these spaces with $H^s_x L^1_\tau$ since the completion of $H^s_x L^1_\tau$ from dense smooth functions yields the embedding $H^s_x L^1_\tau \hookrightarrow C^0_t H^s_x$.  Hence,
 $$
 (u,v) \in  Y  \hookrightarrow C^0_t H^s_x\times C^0_t L^2_x
 $$
 \subsection{Reduction to the case of mean-zero initial data} 
 We precondition \eqref{eq:1} so that we can assume that $v$ satisfies the mean-zero condition for the purpose of establishing the well-posedness result.  We can achieve this by changing variable $v\to v - \int_{\bt} v\, dx$ in the above equation.   Note $\int_{} v\, dx = \int_{\bt} g\, dx = g_0$ by mean-conservation for $v$.  Then, we have
 $$
 \left| \begin{array}{l} u_t + (uv)_x  - g_0 u_x  + u_{xxx} = 0,\\
 	v_t + uu_x  = 0\\ 
 	u|_{t=0} = f \in H^s(\mathbb{T}), \quad v|_{t=0} = g- g_0 \in L^2 (\mathbb{T}). \end{array}\right.   
 $$
 
 Then, we change variable $u(t,x)\mapsto u (t,x + g_0 t)$ and note
 $$
 \p_t u (t,x + g_0 t) = u_t  (t,x + g_0 t) + g_0 u_x (t, x+ g_0t)
 $$
 Substituting this expression to above, we can remove the extra term $-g_0 u_x$ from the equation, recovering the original form of \eqref{eq:1} with an additional assumption that $\int_\bt v(t,x)\,dx =0$ for all $t\geq 0$.  The proof of Theorem~\ref{th:wp} will proceed with this assumption.  The well-posedness of original system prior to conditioning can be easily recovered because these transformations are invertible.\\
 
 We can formulate \eqref{eq:1} as:
 \begin{equation}\label{eq:2}
 	\left| \begin{array}{l} u = e^{-t\p_x^3} u_0 - \int_0^t e^{(t-s)\p_x^3} (uv)_x\, ds,\\
 		v = v_0 - \int_0^t uu_x\, ds. \end{array}\right.
 \end{equation}

 \subsection{Bilinear estimates in Bourgain spaces} 
 We begin by listing known linear estimates in Bourgain spaces:
 
\begin{lemma} \label{le:eta}For any $\eta\in \cs_t(\mathbb{R})$, $s\in \mathbb{R}$, and $T>0$, 
 	\begin{align*}
 		\n{\eta(t/T) e^{t\p_x^3} u_0 }{X^{s,b}} &\lesssim_b \n{u_0}{H^s_x}\\
 		\n{\eta(t/T) u}{X^{s,b'}} &\lesssim T^{b-b'}\n{u}{X^{s,b}}
 	\end{align*}
 	for $0<b'<b\leq \f{1}{2}$.
 \end{lemma}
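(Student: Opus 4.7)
The plan is to prove both estimates by direct computations on the space-time Fourier side, following the standard pattern in Bourgain-space theory. The first estimate follows from writing down the explicit Fourier transform of $\eta(t/T) e^{t\p_x^3} u_0$, while the second reduces, after conjugating by the free evolution $e^{t\p_x^3}$, to a one-dimensional Sobolev multiplier estimate in the time variable.

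For the first estimate, since $e^{t\p_x^3} u_0$ has space-time Fourier support on the curve $\tau = k^3$, one computes
$$
\wt{\eta(t/T) e^{t\p_x^3} u_0}(\tau, k) = T\, \hat{\eta}\bigl(T(\tau - k^3)\bigr) (u_0)_k,
$$
where $\hat{\eta}$ denotes the one-dimensional Fourier transform in time. Inserting this into the definition of the $X^{s,b}$-norm and changing variables $\sigma = T(\tau - k^3)$ gives
$$
\n{\eta(t/T) e^{t\p_x^3} u_0}{X^{s,b}}^2 = T \sum_k \lan{k}^{2s} |(u_0)_k|^2 \int_{\R} \lan{\sigma/T}^{2b} |\hat{\eta}(\sigma)|^2 \, d\sigma.
$$
The elementary bound $\lan{\sigma/T}^{2b} \lesssim 1 + T^{-2b}|\sigma|^{2b}$ combined with the Schwartz decay of $\hat{\eta}$ controls the inner integral by $\lesssim T + T^{1-2b}$, which stays bounded for $T \in (0,1]$ precisely because $1 - 2b \ge 0$ for $b \le \f{1}{2}$, yielding the claim.

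For the second estimate, I conjugate by the group: setting $v(t, x) := e^{-t\p_x^3} u(t, x)$, one has $\wt{u}(\tau, k) = \wt{v}(\tau - k^3, k)$, so the $X^{s,b}$-norm of $u$ equals the $H^b_t H^s_x$-norm of $v$. Since multiplication by $\eta(t/T)$ commutes with all spatial operators, $\eta(t/T) u = e^{t\p_x^3}[\eta(t/T) v]$, and the same reduction yields
$$
\n{\eta(t/T) u}{X^{s,b'}} = \n{\eta(t/T) v}{H^{b'}_t H^s_x}.
$$
The desired inequality thus reduces to the one-dimensional Sobolev multiplier bound
$$
\n{\eta(t/T) w}{H^{b'}_t(\R)} \lesssim T^{b-b'}\, \n{w}{H^b_t(\R)},\quad 0 < b' < b \le \f{1}{2},
$$
applied pointwise in $k$ with $w(t) = \hat{v}(t, k)$ and then squared-summed against $\lan{k}^{2s}$.

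The main technical step is the temporal Sobolev multiplier bound above. Working on the Fourier side via $\widehat{\eta(t/T) w}(\tau) = \int T \hat{\eta}(T(\tau - \sigma))\, \hat{w}(\sigma)\, d\sigma$, I would split integration at $|\tau - \sigma| \sim 1/T$: on $|\tau - \sigma| \lesssim 1/T$ the Sobolev weights satisfy $\lan{\tau} \sim \lan{\sigma}$ when both are $\gtrsim 1/T$, and the required $T^{b-b'}$ factor emerges from integrating $\lan{\sigma}^{-2b}$ over intervals of length $1/T$ (this is precisely where the restriction $b \le \f{1}{2}$ is essential); on $|\tau - \sigma| \gtrsim 1/T$ the rapid Schwartz decay of $\hat{\eta}$ overwhelms any polynomial loss in $T$. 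Assembling the two regimes via Schur's test on the kernel $K(\tau, \sigma) = \lan{\tau}^{b'} T \hat{\eta}(T(\tau - \sigma)) \lan{\sigma}^{-b}$ yields the multiplier bound with the sharp $T^{b-b'}$ gain.
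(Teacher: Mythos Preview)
The paper does not actually prove this lemma; it introduces it with the phrase ``We begin by listing known linear estimates in Bourgain spaces'' and then simply states the two inequalities without argument, treating them as standard facts from the literature. So there is no paper proof to compare against.

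Your approach is the standard textbook derivation and is essentially correct. The first estimate is handled exactly as one finds in, e.g., Tao's monograph: compute the space-time Fourier transform explicitly, change variables to $\sigma = T(\tau - k^3)$, and use the Schwartz decay of $\hat{\eta}$; the restriction $b\le \tfrac12$ is what keeps the factor $T^{1-2b}$ bounded for small $T$ (note your argument tacitly assumes $0<T\le 1$, which is the only regime used in the paper). The second estimate is correctly reduced, via conjugation by the free group, to the one-dimensional temporal multiplier bound $\|\eta(\cdot/T)w\|_{H^{b'}_t}\lesssim T^{b-b'}\|w\|_{H^b_t}$; your Schur-test sketch on the kernel $K(\tau,\sigma)=\lan{\tau}^{b'}T\hat{\eta}(T(\tau-\sigma))\lan{\sigma}^{-b}$ is the right idea, though the details (especially the low-frequency region $|\sigma|\lesssim 1/T$ where the $T^{b-b'}$ gain actually originates) are more delicate than your outline suggests and are where the hypothesis $b\le \tfrac12$ is genuinely used. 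A cleaner route, if you want to tighten this, is to split $w$ into pieces with $|\hat{w}|$ supported on $\{|\sigma|\le 1/T\}$ and its complement and handle each directly; this is the argument in Lemma~2.11 of Tao's \emph{Nonlinear Dispersive Equations}.
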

 In particular, for the second statement, it will be important that we can ``spare'' some modulation weight within each estimate.  That is, whenever we use an embedding of the type $X^{0,b}\hookrightarrow X^{0,b'}$ for $0 < b' < b \leq \f{1}{2}$, we can gain a positive power of $T$.  In addition, following properties are well-known:
 	\begin{align*}
 		\n{\int_0^t e^{(t-s)\p_x^3} (uv)_x (s,\cdot)\, ds}{X^{s,\f{1}{2}} \cap H^s_x L^1_{\tau}} &\lesssim \n{(uv)_x }{X^{s,-\f{1}{2}}} + \n{\lan{k}^s\f{\cf\left[ (uv)_x\right]}{\lan{\tau - k^3}} }{\ell^2_k L^1_\tau}\\
 		\n{\int_0^t (u u _x) (s, \cdot)\, ds}{H_t^{\f{1}{2}}L^2_x \cap L^2_x L^1_\tau} &\lesssim \n{u  u_x }{H_t^{-\f{1}{2}} L^2_x} +  \n{ \f{  \cf \left[ u u_x \right]}{\lan{\tau}} }{\ell^2_k L^1_\tau}
 	\end{align*}

  To prove the local well-posedness, we need to show the following bilinear estimates:
 \begin{lemma}\label{le:1}
 	Let $0\leq s \leq \f{1}{2}$.  For $(u,v) \in X^{s, \f{1}{2}} \times H^{\f{1}{2}}_t L^2_x$, 
 	\begin{align}
 		\n{\p_x(uv)}{X^{s,-\f{1}{2}}}  + \n{\lan{k}^s \f{\cf\left[\p_x (uv)\right]}{\lan{\tau - k^3}} }{\ell^2_k L^1_\tau}&\lesssim  \n{u}{X^{s,\f{1}{2}}} \n{v}{ H^{\f{1}{2}}_t L^2_x  }\label{eq:est1}\\
 		\n{ \p_x (u^2)}{H^{-\f{1}{2}}_t L^2_x}+  \n{ \f{  \cf \left[ u u_x \right]}{\lan{\tau}} }{\ell^2_k L^1_\tau} & \lesssim \n{u}{X^{0,\f{1}{2}}}^2 \label{eq:est2}
 	\end{align}
 \end{lemma}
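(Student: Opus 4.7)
The plan is to convert each of the four estimates in Lemma~\ref{le:1} into a trilinear integral on the Fourier side by Plancherel and duality, and then exploit the dispersive resonance of the KdV phase $\tau-k^3$ through a case analysis in the modulation and frequency variables. Throughout, the mean-zero reduction on $v$ forces the relevant Fourier mode $k_2$ to be nonzero in the convolution defining $\widehat{uv}$, which will be crucial in \eqref{eq:est1}.

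For the $L^2$-based part of \eqref{eq:est1}, duality against an $X^{-s,1/2}$-test function reduces the problem to a uniform bound on the multiplier
\[
M_1 \;=\; \frac{\lan{k}^s |k|}{\lan{k_1}^s\,\lan{\sigma}^{1/2}\,\lan{\sigma_1}^{1/2}\,\lan{\tau_2}^{1/2}},
\]
with $\sigma=\tau-k^3$, $\sigma_1=\tau_1-k_1^3$, $k_2=k-k_1$, $\tau_2=\tau-\tau_1$. The key resonance identity
\[
\sigma-\sigma_1-\tau_2 \;=\; k_1^3-k^3 \;=\; -k_2(k^2+kk_1+k_1^2)
\]
shows that $\max(|\sigma|,|\sigma_1|,|\tau_2|)\gtrsim |k_2|\,\max(|k|,|k_1|)^2$. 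I would split into the two natural frequency regimes, high-high ($|k_1|\sim|k_2|\gg|k|$) and high-low ($|k|\sim|k_1|\gg|k_2|$), and, within each, into three subcases depending on which of the three modulations is dominant. In every subcase, the derivative $|k|$ in the numerator is absorbed into the largest modulation weight (the ratio $\lan{k}^s/\lan{k_1}^s$ being bounded for $s\ge 0$), after which the trilinear integral is closed by Cauchy--Schwarz, or by the $X^{0,1/3}\hookrightarrow L^4_{t,x}$ embedding whenever extra room is needed.

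For the $L^2$-based part of \eqref{eq:est2}, the dual form is $\int u^2\,w_x$ with $w\in H^{1/2}_tL^2_x$, and I would split according to whether $|\tau|\ge |k|^3/2$ or not. In the former region $\lan{\tau}^{1/2}\gtrsim |k|^{3/2}$ alone absorbs the derivative with room to spare and a Cauchy--Schwarz plus Strichartz argument closes the bound. In the latter region $|\tau-k^3|\ge |k|^3/2$, and the classical periodic-KdV resonance identity $\sigma_1+\sigma_2-3kk_1k_2=\tau-k^3$ gives $\max(|\sigma_1|,|\sigma_2|,|kk_1k_2|)\gtrsim |k|^3$; a dominant-modulation case analysis then handles the derivative as before, with the non-resonant subcase $|k_1k_2|\gtrsim k^2$ (which forces $|k_1|\sim|k_2|\sim|k|$) being the most delicate but standard.

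Finally, the $\ell^2_k L^1_\tau$ pieces follow by upgrading the corresponding $L^2_\tau$ estimates: Cauchy--Schwarz in $\tau$ against $\lan{\tau-k^3}^{-1/2-\varepsilon}\in L^2_\tau$ (respectively $\lan{\tau}^{-1/2-\varepsilon}$) reduces each to a slightly stronger $X^{s,-1/2+\varepsilon}$ (respectively $H^{-1/2+\varepsilon}_tL^2_x$) estimate, and the $\varepsilon$-loss is absorbed by the positive margin built into each resonant subcase above. The main technical obstacle I anticipate is the off-resonant subcase of \eqref{eq:est1} with $|\tau_2|$ dominant in the high-low regime $|k|\sim|k_1|\gg|k_2|\ge 1$: the resonance only gives $\lan{\tau_2}^{1/2}\gtrsim|k_2|^{1/2}|k|$, so after absorbing the full $|k|$ one is left with a factor $|k_2|^{-1/2}$, which is not $\ell^1$-summable on its own; closing this case will require exploiting the convolution structure in $k_2$ together with the $\tau_2$-integration rather than bounding the multiplier pointwise.
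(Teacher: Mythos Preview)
Your plan is essentially the paper's: pass to trilinear Fourier integrals, exploit the resonance identity, split on the dominant modulation, and close with H\"older plus the $X^{0,1/3}\hookrightarrow L^4_{t,x}$ embedding. Two points need correction.

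The real gap is in your treatment of the $\ell^2_kL^1_\tau$ terms. Cauchy--Schwarz in $\tau$ against $\lan{\tau-k^3}^{-1/2-\varepsilon}$ reduces to an $X^{s,-1/2+\varepsilon}$ estimate, and this is fine in the subcases where $\lan{\sigma_1}$ or $\lan{\tau_2}$ is dominant (there the paper even gets $X^{s,-1/3}$). But in the subcase where the \emph{output} modulation $\lan{\tau-k^3}$ alone carries the resonance gain $H_1:=\lan{k_2}\max\{k^2,k_1^2\}$ (i.e.\ $\lan{\sigma_1},\lan{\tau_2}\ll H_1$ so $\lan{\tau-k^3}\sim H_1$), the full half-power $\lan{\tau-k^3}^{-1/2}$ is spent absorbing $|k|\lan{k}^s$, and the $\varepsilon$-loss becomes a high-frequency factor $H_1^{\varepsilon}$ which cannot be paid for by the leftover slack in the $u$- or $v$-modulation. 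The paper's fix is different and cleaner: in that subcase $\tau$ is confined to the shell $|\tau-k^3|\sim H_1$, on which $\lan{\tau-k^3}^{-1/2}$ has uniformly bounded $L^2_\tau$-norm, so the $\ell^2_kL^1_\tau$ piece is controlled by the $X^{s,-1/2}$ piece directly, with no $\varepsilon$ needed.

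Two smaller remarks. The ``main technical obstacle'' you flag is not one: the residual $|k_2|^{-1/2}$ is not to be summed but used as a Sobolev weight, exactly as in the paper---place it on $v$ to form $J_x^{-1/2}(J_t^{1/2}v)$, then H\"older with $J_x^s u$ and the dual function in $L^4_{t,x}$ closes the estimate. And for \eqref{eq:est2}, your identity $\tau-k^3=\sigma_1+\sigma_2-3kk_1k_2$ is correct but not the useful one; the paper uses $\tau=\sigma_1+\sigma_2+k(k_1^2+k_2^2-k_1k_2)$, whose resonance factor is always $\gtrsim|k|\max\{k_1^2,k_2^2\}$, so the three-way $\max$ involves only $\lan{\tau},\lan{\sigma_1},\lan{\sigma_2}$. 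Your ``delicate'' subcase $|k_1k_2|\gtrsim k^2$ with both $|\sigma_j|$ small is in fact empty: under those hypotheses the paper's identity forces $|\tau|\gtrsim|k|^3$, contradicting your Region~2 assumption.
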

Furthermore, we will see that, when introducing a time-cutoff $\eta(t/T)$ within the norms of the LHS, we can gain a positive power of $T$ on the RHS by sparing some of modulational weight.

 \begin{proof}
 	We first consider \eqref{eq:est1}.  For this estimate, relevant modulational weights are $\lan{\tau - k^3}$, $\lan{\tau_1 - k_1^3}$ and $\lan{\tau_2}$ where $(\tau_1, k_1)$ is the Fourier frequency of $u$, $(\tau_2, k_2)$ is the Fourier frequency of $v$, and $(\tau,k)$ is the Fourier frequency of the product $uv$.  Hence, these frequencies must satisfy $\tau_1 + \tau_2 = \tau$, $k_1 + k_2 = k$.  Additionally, we can assume that $k_2\neq 0$ thanks to the mean-zero assumption for $v$.  To take advantage of modulational weights, we need to observe the following algebraic identity:
 	\begin{equation}\label{eq:alg1}
 	\tau - k^3 = (\tau_1- k_1^3) + \tau_2 - k_2(k_1^2+ k^2 + k_1 k),
 	\end{equation}
 	which leads to the fact:
 	$$
 	\max\{ \lan{\tau - k^3}, \lan{\tau_1 - k_1^3}, \lan{\tau_2}\} \sim |k_2|\, (k_1^2+ k^2 + k_1 k) \gtrsim \lan{k_2} \max\{k^2, k_1^2\} =: H_1.
 	$$
 	Note that $|k_2| \gtrsim \lan{k_2}$ is possible due to the assumption $k_2 \neq 0$, and $k_1^2+ k^2 + k_1 k \gtrsim  \max\{k^2, k_1^2\}$ is possible due to the algebraic identity
 	$$
 	k_1^2+ k^2 + k_1 k = \f{3}{4} k_1^2 + \left(\f{1}{2}k_1 + k\right)^2 = \f{3}{4} k^2 + \left(k_1 + \f{1}{2}k\right)^2
 	$$
Lastly, note that $H_1 \neq 0$, since this will require $k= k_1 = 0$, which forces $k_2 =0$.  Based on the relationship stated above, one of the following must be true:
\begin{enumerate}
\item $\lan{\tau - k^3} \sim H_1$
\item $\lan{\tau_1 - k_1^3} \gtrsim H_1$
\item $\lan{\tau_2}\gtrsim H_1$
\end{enumerate}

We consider these three cases separately:\\

\textbf{Case 1.} $\lan{\tau - k^3} \sim H_1$\\

Note that when restricting $\tau\in \mathbb{R}$ so that $C^{-1} H_1 \leq |\tau - k^3| \leq C H_1$ for some $C\gg 1$, $\n{ |\tau- k^3|^{-\f{1}{2}}}{L^2_{\tau}}$ is uniformly bounded and only depends on $C$, and hence $\n{\lan{\tau- k^3}^{-\f{1}{2}}}{L^2_\tau}$ is uniformly bounded.  This leads to 
 	$$
 	\n{\lan{k}^s \f{\cf_{x,t} \left[\p_x(uv)\right] }{\lan{\tau - k^3}} }{\ell^2_k L^1_\tau} \lesssim \n{\lan{k}^s\f{\cf_{x,t} \left[\p_x(uv)\right] }{\lan{\tau - k^3}^{\f{1}{2}}} }{\ell^2_k L^2_\tau} = \n{\p_{x} (uv)}{X^{s,-\f{1}{2}}} 
 	$$
This means that the second term on the LHS of \eqref{eq:est1} is controlled by the first term.  Hence, it remains to show
 $$
\n{\p_{x} (uv)}{X^{s,-\f{1}{2}}} \lesssim  \n{u}{X^{s,\f{1}{2}}} \n{v}{ H^{\f{1}{2}}_t L^2_x  }
$$
Since $\lan{\tau- k^3} \sim H_1 \geq k^2 \lan{k_2}$, for any $s\in [0, \f{1}{2}]$, 
$$
\lan{k}^s \f{ik}{\lan{\tau - k^3}^{\f{1}{2}}} \lesssim \f{\lan{k}^s}{\lan{k_1}^s \lan{k_2}^{\f{1}{2}}} \lesssim \min \{|k_1|, |k_2|\}^{-\f{1}{2}}
$$
where we have used $\max\{|k_1|, |k_2|\} \gtrsim |k|$.  This estimate yields,

$$
\n{\p_{x} (uv)}{X^{s,-\f{1}{2}}} = \n{\lan{k}^s \f{ik}{\lan{\tau - k^3}^{\f{1}{2}}} \cf_{t,x}[ uv]}{\ell^2_k L^2_\tau} \lesssim  \n{\left(J_x^{s-\f{1}{2}} u\right)v}{L^2_{t,x}} + \n{\left(J_x^s u\right)\left(J_x^{-\f{1}{2}} v\right)}{L^2_{t,x}} 
$$
Applying H\"older's inequality, followed by the Sobolev and $L^4_{t,x}$ embedding,
\begin{align*}
 \n{\left(J_x^{s-\f{1}{2}} u\right)v}{L^2_{t,x}} &\lesssim  \n{J^{s-\f{1}{2}}_x u}{L^4_{t}L^\infty_x}\n{v}{L^4_{t}L^2_x}\lesssim   \n{J^{s}_x u}{L^4_{t,x}} \n{v}{H^{\f{1}{2}}_t L^2_x}\lesssim   \n{u}{X^{s,\f{1}{3}}} \n{v}{H^{\f{1}{2}}_t L^2_x} \\
\n{\left(J_x^s u\right)\left(J_x^{-\f{1}{2}} v\right)}{L^2_{t,x}} &\lesssim \n{J^s_x u}{L^4_{t,x}}\n{J_x^{-\f{1}{2}} v}{L^4_{t,x}}\lesssim  \n{u}{X^{0,\f{1}{3}}} \n{v}{H^{\f{1}{2}}_t L^2_x} 
\end{align*}
which yields the desired RHS. 	Note that we can gain a positive power of $T$ from Lemma~\ref{le:eta}, since the final quantity on the RHS above is in terms of $\n{u}{X^{s,\f{1}{3}}}$, whereas the RHS of \eqref{eq:est1} affords us a stronger norm $\n{u}{X^{s,\f{1}{2}}}$.\\

For the remaining two cases, we know that $\lan{\tau - k^3}\not\sim H_1$.  In this case, we do not need to use the full $\f{1}{2}$-power of this modulational weight.  In fact, we only need to use $\f{1}{3}$-power of this weight in order to use the $L^4_{t,x}$ embedding.  Note
 		$$
 	\n{\lan{k}^s \f{\cf_{x,t} \left[\p_x(uv)\right] }{\lan{\tau - k^3}} }{\ell^2_k L^1_\tau} \lesssim \n{\lan{k}^s \f{\cf_{x,t} \left[\p_x(uv)\right] }{\lan{\tau - k^3}^{\f{1}{3}}} }{\ell^2_k L^2_\tau} = \n{\p_x(uv)}{X^{s,-\f{1}{3}}}. 
 	$$
Also, since $\n{\p_x(uv)}{X^{s,-\f{1}{2}}}\lesssim \n{\p_x(uv)}{X^{s,-\f{1}{3}}}$, both terms on the LHS of \eqref{eq:est1} is controlled by $\n{\p_x(uv)}{X^{s,-\f{1}{3}}}$.\\

We remark that, while $b = -\f{1}{3}$ was chosen to take advantage of the $L^4_{t,x}$ embedding, we can choose any value between $-\f{1}{2}$ and $-\f{1}{3}$ for the $X^{s,b}$ norm on the RHS above.   In fact, by selecting a slight smaller $b$, such as $-\f{2}{5}$, we can again obtain a small positive power of $T$ via Lemma~\ref{le:eta}.\\

To estimate $\n{\p_x(uv)}{X^{s,-\f{1}{3}}}$, we use duality: $\left(X^{0, -\f{1}{3}}\right)^* = X^{0,\f{1}{3}}$, which leads to
$$
\n{\p_x (uv)}{X^{s,-\f{1}{3}}} = \n{J_x^s \p_x (uv)}{X^{0,-\f{1}{3}}} = \sup_{\n{w}{X^{0,\f{1}{3}}}= 1} \left|\int_{\mathbb{T}\times \mathbb{R}} \p_x(uv) J_x^s w \, dx\,dt\right|
$$
We consider the remaining two cases for an arbitrary $w$ satisfying $\n{w}{X^{0,\f{1}{3}}}= 1$.  For simplicity of calculations, it is common to assume that all functions on the Fourier side (i.e. $\wt{u}$, $\wt{v}$, $\wt{w}$) are nonnegative.  By doing so and using Parseval's identity, we can rewrite the integral to be estimated to:
$$
\left|\int_{\mathbb{T}\times \mathbb{R}} \p_x(uv) J^s_x w \, dx\,dt\right| \leq \int_{\Gamma} |k|  \wt{u}(\tau_1, k_1) \wt{v}(\tau_2, k_2) \lan{k}^s \wt{w}(\tau,k)\, d\Gamma =: I_1
$$
where $\Gamma$ is a hyperplane given by $\{\tau_1, \tau_2,\tau \in \mathbb{R}, k_1, k_2, k\in \mathbb{Z}: \tau_1 + \tau_2 = \tau, k_1 + k_2 = k\}$ and $d\Gamma$ is the inherited measure on $\Gamma.$  We consider this estimate in the remaining two cases.\\

\noindent
 	\textbf{Case 2:}  $\lan{\tau_1 - k_1^3}\gtrsim H_1$.\\
 
In this case,
\begin{align*}
I_1 &\lesssim \int_{\Gamma} \f{|k|\lan{k}^s}{H_1^{\f{1}{2}}\lan{k_1}^s} \lan{\tau_1 - k_1^3}^{\f{1}{2}} \wt{J_x^{s} u}(\tau_1, k_1) \wt{v}(\tau_2, k_2) \wt{w}(\tau,k)\, d\Gamma\\
&\lesssim \int_{\Gamma} \min\{|k_1|, |k_2|\}^{-\f{1}{2}} \left(\lan{\tau_1 - k_1^3}^{\f{1}{2}} \wt{J_x^s u}(\tau_1, k_1)\right) \left( \wt{v}(\tau_2, k_2)\right) \wt{w}(\tau,k)\, d\Gamma\\
\end{align*}

Denoting $U:= \cf^{-1}_{k_1, \tau_1} \left[ \lan{k_1}^s \lan{\tau_1 - k_1^3} \wt{u}\right]$, the RHS above is bounded by
$$
\int_{\mathbb{T}\times \mathbb{R}} \left(J_x^{s-\f{1}{2}}  U\right) \, v\, w\, dx\,dt +\int_{\mathbb{T}\times \mathbb{R}} \left(J_x^{s}  U\right)\, \left(J_x^{-\f{1}{2}}v\right)\,  w\, dx\,dt 
$$
Applying H\"older and Sobolev inequalities, this is bounded by
\begin{align*}
&\n{J_x^{s-\f{1}{2}}  U}{L^2_t L^4_x}  \n{v}{L^4_t L^2_x} \n{w}{L^4_{t,x}} + \n{J_x^{s}  U}{L^2_{t,x}}  \n{J_x^{-\f{1}{2}} v}{L^4_{t,x}}  \n{w}{L^4_{t,x}}\\
&\lesssim \n{J_x^{s}  U}{L^2_{t,x}}  \n{v}{H^{\f{1}{2}}_x L^2_x} \n{w}{X^{0,\f{1}{3}}} + \n{J_x^{s}  U}{L^2_{t,x}}  \n{v}{H^{\f{1}{2}}_t L^2_x}  \n{w}{X^{0,\f{1}{3}}}
\end{align*}
Since $\n{J_x^{s}  U}{L^2_{t,x}} = \n{u}{X^{s,\f{1}{2}}}$, we obtain the desired inequality.\\

 	\noindent
 	\textbf{Case 3:} $\lan{\tau_2}\gtrsim H_1$.\\
 	
This estimate can be obtained from similar calculations as in Case 2.  As before, we can write
\begin{align*}
I_1 &\lesssim \int_{\Gamma}  \f{|k|\lan{k}^s}{H_1^{\f{1}{2}}\lan{k_1}^s} \wt{J_x^s u}(\tau_1, k_1) \wt{J_t^{\f{1}{2}}v}(\tau_2, k_2) \wt{w}(\tau,k)\, d\Gamma\\
&\lesssim \int_{\Gamma} \min\{|k_1|, |k_2|\}^{-\f{1}{2}} \wt{J_x^s u}(\tau_1, k_1) \wt{J_t^{\f{1}{2}}v}(\tau_2, k_2) \wt{w}(\tau,k)\, d\Gamma
\end{align*}
from which similar calculations as in Case 2 ensues.  Here, we only need to make sure that $J_t^{\f{1}{2}}v$ gets placed under $L^2_t$-norms when applying H\"older's inequality.  However, both $J_x^s u$ and $w$ can be placed under $L^4_{t}$-based norms due to the $X^{0,\f{1}{3}}\hookrightarrow L^4_{t,x}$ embedding.  Hence, all Lebesgue indices work out similarly as before.  We omit the details.  This concludes all cases to prove \eqref{eq:est1}.\\

Next, we consider  \eqref{eq:est2}.  For this estimate, we observe the algebraic identity:
 	$$
 	\tau = (\tau_1 - k_1^3) +  (\tau_2 - k_2^3) + k (k_1^2+ k_2^2 - k_1 k_2)
 	$$
Here, $(\tau_1, k_1)$ is the Fourier frequency of the first entry $u$, $(\tau_2, k_2)$ is the Fourier frequency of the second entry $u$, and $(\tau, k)$ is the frequency of the product.  Again, these variables satisfy $\tau_1 + \tau_2 = \tau$ and $k_1+k_2 = k$, as well as $k\neq 0$ from the mean-zero assumption.  From the identity above, we must have
 	$$
 	\max\{\lan{\tau}, \lan{\tau_1 - k_1^3}, \lan{\tau_2 - k_2^3}\} \gtrsim \lan{k} \max\{k_1^2, k_2^2\} =: H_2
 	$$
 	One of the three statements must be true:
\begin{enumerate}
\item $\lan{\tau} \sim H_2$
\item $ \lan{\tau_1 - k_1^3} \gtrsim H_2$
\item $ \lan{\tau_2 - k_2^3}\gtrsim H_2$
\end{enumerate}
By symmetry, we will omit the third case. 	\\

 	\noindent
 	\textbf{Case 1:}  $\lan{\tau}\sim H_2 $.\\
 	
 	Using an analogous argument as in the Case 1 proof of \eqref{eq:est1}, it suffices to bound $\n{ \p_x (u^2)}{H^{-\f{1}{2}}_t L^2_x}$.  Here, we have
$$
\n{ \p_x (u^2)}{H^{-\f{1}{2}}_t L^2_x} \lesssim \n{ \f{ik}{\lan{\tau}^{\f{1}{2}}} \cf_{t,x}[u^2]}{\ell^2_k L^2_\tau} \lesssim  \n{ \lan{k}^{-\f{1}{2}} \cf_{t,x}[u^2]}{\ell^2_k L^2_\tau} \lesssim \n{u^2}{L^2_{t,x}} \lesssim \n{u}{L^4_{t,x}}^2 \lesssim \n{u}{X^{0,\f{1}{3}}}^2
$$
where we simply disposed of the $\f{1}{2}$~derivative gain for $v$. However, we will take advantage of this additional gain in derivatives in a subsequent lemma.\\

\noindent
 	\textbf{Case 2:}  $\lan{\tau_1 - k_1^3}\gtrsim H_2$.\\

As stated under the proof of \eqref{eq:est1}, it sufficies to bound $\n{ \p_x (u^2)}{H^{-\f{1}{3}}_t L^2_x}$.  We note as before $\left| ik/H_2^{\f{1}{2}}\right| \lesssim \lan{k}^{-\f{1}{2}}$.   Denoting  $U :=\cf^{-1}_{\tau_1, k_1} \left[ \lan{\tau_1 - k_1^3}^{\f{1}{2}} \wt{u} \right]$, 
 	$$
 	\n{ \p_x (u^2)}{H^{-\f{1}{3}}_t L^2_x} \lesssim  \n{ \lan{\tau}^{-\f{1}{3}} \lan{k}^{-\f{1}{2}} \cf_{t,x}[U u] }{\ell^2_k L^2_\tau}\lesssim \n{U u}{L^{6/5}_t L^1_x}\lesssim \n{u}{X^{0,\f{1}{2}}} \n{u}{L^{3}_t L^2_x}
 	$$
where we have applied Sobolev embedding and H\"older's inequality.  Using interpolation between $X^{0,0}\hookrightarrow L^2_t L^2_x$ and $X^{0, \f{1}{2}+} \hookrightarrow L^\infty_t L^2_x$, we deduce that $X^{0,b} \hookrightarrow L^3_t L^2_x$ for some $0<b<\f{1}{2}$.  This yields \eqref{eq:est2}.
 \end{proof}

 \subsection{Proof of Theorem~\ref{th:wp}} 
 Given initial data $(u_0,v_0)$ and $0<T\ll 1$, define  $\Gamma_T(u,v) = (\Gamma_1 (u,v), \Gamma_2(u,v))$ by
 $$
 \left| \begin{array}{l} \Gamma_1 (u,v) = \eta(t/T) e^{-t\p_x^3} u_0 - \int_0^t e^{(t-s)\p_x^3} (\eta(s/T)u \eta(s/T)v)_x\, ds,\\
 	\Gamma_2(u,v) = \eta(t/T)v_0 - \int_0^t (\eta(s/T)u) (\eta(s/T)u_x) \, ds. \end{array}\right.
 $$ 

Lemmas~\ref{le:eta} and \ref{le:1} yield, $\exists \ve>0$ such that
\begin{align*}
\n{\Gamma_T(u,v)}{Y} &=  \n{\Gamma_1(u,v)}{X^{s,\f{1}{2}} \cap H^s_x L^1_\tau}+\n{\Gamma_2(u,v)}{H^{\f{1}{2}}_t L^2_x \cap L^2_x L^1_\tau} \\
&\lesssim \n{u_0}{H^s_x} + \n{v_0}{L^2_x}+ T^\ve \n{(u,v)}{Y}^2
\end{align*}

as well as 
\begin{align*}
\n{\Gamma_T(u_1,v_1) - \Gamma_T(u_2,v_2)}{Y} &\lesssim  T^\ve \n{(u_1 + u_2, v_1 + v_2) }{Y}\n{(u_1 - u_2, v_1 - v_2) }{Y}.
\end{align*}
These statements show that $\Gamma_T$ is a contraction map on $Y$ for $T$ sufficiently small.  Hence, the fixed point of $\Gamma_T$ exists and is unique in $Y$.  Continuity of solution map in $Y$-norm also follows.  This proves Theorem~\ref{th:wp}.  As a biproduct of this proof, we have also established that the local-in-time solution $(u,v)$ lies inside a ball in $Y$, where the $Y$-norm of the center of the ball is controlled by $\n{u_0}{H^s} + \n{v_0}{L^2}$.  This leads to a useful estimate:
\begin{equation}\label{eq:useful}
\n{\eta(\cdot/T) u(\cdot)}{X^{s,\f{1}{2}}\cap H^s_x L^1_\tau} + \n{\eta(\cdot/T) v(\cdot)}{H^{\f{1}{2}}_t L^2_x \cap L^2_x L^1_\tau} \lesssim \n{u_0}{H^s}+ \n{v_0}{L^2}.
\end{equation}
where $s\in [0,\f{1}{2}]$.  Here, size of $T$ only depends on $\n{u_0}{H^s}$ and $\n{v_0}{L^2}$.\\

Let us turn our attention to the special case when $(u_0, v_0) \in L^2 \times L^2$.  This case is special because $\int u^2+v^2$ is conserved, which means that  $\n{u(t)}{L^2_x}$ and $\n{v(t)}{L^2_x}$ are both bounded by some constant for $t\in [0,T]$. In this case, the size of $T$ in the local well-posedness statement of Theorem~\ref{th:wp} only depends on $\n{u_0}{L^2}$ and $\n{v_0}{L^2}$. We can first find a solution on the time interval $[0,T]$.  Then we can construct a continuation of this solution on the next time interval $[T, 2T]$ using the initial data $(u(T), v(T))$.  The next time interval can be chosen to be the same since $\n{u(T)}{L^2_x}$ and $\n{v(T)}{L^2_x}$ is bounded by the same constant. This process can be iterated indefinitely to extend the local-in-time solution in $(C^0_t L^2_x)^2$ to a global-in-time solution.  This process leads to the statement of Corollary~\ref{cor:wp}.

\subsection{Normal form transformation}
\label{sec:4}
 Heuristics of this approach relies on the fact that solution $v$ of \eqref{eq:1} portrays a substantial nonlinear smoothing effect.  That is, $v = v_0 + z$ where $z$ is much smoother than $v_0 \in L^2$.   Note that our mean-zero assumption on $v$ carries onto the new variable $z$.  We can rewrite \eqref{eq:1} as
$$ 
 \left| \begin{array}{l} u_t + u_x v + u z_x + u(v_0)_x + u_{xxx}= 0\\ z_t + u u_x = 0\\ u\vert_{t=0} = u_0, z\vert_{t=0} = 0\end{array}\right.
$$
 We will place $u_0 \in H^1$ and $v_0 \in L^2$.  The roughest nonlinear term of this system is $u(v_0)_x$, which we will remove using normal form transformation.  To this end, we define the normal form operator $T(\cdot, \cdot)$ as follows.  For any $f,g \in C^\infty(\mathbb{T})$ with mean-zero condition imposed on $g$,
 $$
 T(f,g) = \mathcal{F}^{-1}_k \left[\sum_{\Tiny\begin{array}{c}k_1 + k_2 = k\\ k_2 \neq 0\end{array}} \f{-i}{k_1^2 + k^2 + k_1 k} f_{k_1} g_{k_2} \right]
 $$
Note that the denominator of the Fourier symbol only vanishes when $k_1 = k =0$, which is only possible if $k_2 = 0$.  Hence, we avoid the singularity.  By the algebraic identity \eqref{eq:alg1}, we have
 $$
 (\p_t + \p_{xxx})T(f,g) = T((\p_t + \p_{xxx})f, g) + T(f,\p_t g) + f \p_x g. 
 $$
Substituting $u$ and $v_0$ as entries of $T(\cdot, \cdot)$,
  $$
 (\p_t + \p_{xxx}) T(u,v_0) = T(\p_x (uv), v_0) + u \p_x v_0
  $$ 
We change variable with $u = T(u,v_0) + w$, which leads to the transformed system:
 \begin{equation}\label{eq:3}
\left| \begin{array}{l} v = v_0 + z\\ u = T(u,v_0) + w\\
w_t + u_x v + u z_x  + T(\p_x (uv), v_0) + w_{xxx}= 0\\ z_t + u u_x = 0\\ w\vert_{t=0} = u_0 -T(u_0,v_0)\\ z\vert_{t=0} = 0\end{array}\right.
 \end{equation}
 
First, we need to establish mapping properties of $T(\cdot, \cdot)$.
\begin{lemma}\label{le:t}
Mapping properties of $T(\cdot,\cdot)$ are listed below:
\begin{enumerate}
\item $T: H^s_x L^1_\tau \times L^2_x \to H^{s+1}_x L^1_\tau$ is a continuous map for  $s=0, 1$.

\item $T: X^{\f{1}{2},\f{1}{2}} \times L^2_x \to X^{\f{1}{2}, \f{1}{2}}$ is a continuous map.  More specifically, for $0< T \ll 1$ and $\eta\in \mathcal{S}_t$,
\begin{align}
\n{T(\eta(\cdot/T) u, v_0)}{X^{\f{1}{2}, \f{1}{2}}} &\lesssim \n{u}{X^{0,\f{1}{2}}} \n{v}{L^2_x} + T^{\f{1}{6}-} \n{u}{X^{\f{1}{2},\f{1}{2}}} \n{v_0}{L^2_x}\label{eq:t2}
\end{align}
\end{enumerate}

\end{lemma}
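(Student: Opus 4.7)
\textbf{Proof proposal for Lemma~\ref{le:t}.} The engine driving both parts is a pointwise bound on the Fourier symbol of $T$. Writing $k^2 + kk_1 + k_1^2 = \tfrac{3}{4}k^2 + (k_1 + k/2)^2 = \tfrac{3}{4}k_1^2 + (k + k_1/2)^2$, the denominator dominates $\max\{\lan{k},\lan{k_1}\}^2$, and it cannot vanish because $k_2 \neq 0$ precludes $(k,k_1) = (0,0)$. Thus the symbol satisfies $|k^2+kk_1+k_1^2|^{-1} \lesssim \max\{\lan{k},\lan{k_1}\}^{-2}$, supplying two free derivatives that will be spent against output weights.

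For part (1), time-independence of $v_0$ forces its space-time Fourier transform to sit on $\{\tau_2 = 0\}$, so $\wt{T(f,v_0)}(\tau,k)$ is a clean $k_1$-convolution of $\wt{f}(\tau,\cdot)$ against $(v_0)_\cdot$. I would push the $L^1_\tau$ norm inside via Minkowski's inequality, then apply Cauchy--Schwarz in $k_1$ in the form $|\sum_{k_1} M A_{k_1} B_{k-k_1}|^2 \leq (\sum_{k_1}|M|^2)(\sum_{k_1} A_{k_1}^2 B_{k-k_1}^2)$, so that only $\sup_k \sum_{k_1}|M(k,k_1)|^2 < \infty$ is required. With the effective kernel $M(k,k_1) \lesssim \lan{k}^{s+1}/(\lan{k_1}^s \max\{\lan{k},\lan{k_1}\}^2)$, splitting into $|k_1|\leq|k|$ versus $|k_1|>|k|$ verifies this uniform bound for both $s=0$ and $s=1$, which delivers the mapping $H^s_x L^1_\tau \times L^2_x \to H^{s+1}_x L^1_\tau$.

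Part (2) is the more delicate statement, and will be attacked through the resonance identity
\begin{equation*}
\tau - k^3 \;=\; (\tau - k_1^3) - k_2\bigl(k^2 + k k_1 + k_1^2\bigr),
\end{equation*}
which is exceptionally clean because $v_0$ sits at $\tau_2 = 0$, forcing $\tau_1 = \tau$. The triangle inequality then yields $\lan{\tau-k^3}^{1/2} \lesssim \lan{\tau-k_1^3}^{1/2} + |k_2|^{1/2}(k^2+kk_1+k_1^2)^{1/2}$, and the two resulting pieces of $\n{T(\eta(\cdot/T)u,v_0)}{X^{1/2,1/2}}$ are handled separately. In the first piece, $\lan{\tau-k_1^3}^{1/2}$ pairs with $\wt{\eta(\cdot/T) u}(\tau,k_1)$ to build the $X^{0,1/2}$ norm of $u$, and the full two-derivative gain of the symbol more than absorbs the output weight $\lan{k}^{1/2}$; a Cauchy--Schwarz identical in structure to part (1) closes this piece as $\n{u}{X^{0,1/2}}\n{v_0}{L^2_x}$, giving the first term of \eqref{eq:t2} with no $T$-dependence. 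In the second piece, one power of the symbol cancels $(k^2+kk_1+k_1^2)^{1/2}$, leaving an effective bilinear multiplier $\Theta(k,k_1) = \lan{k}^{1/2}|k_2|^{1/2}/(k^2+kk_1+k_1^2)^{1/2}$ which is only $O(1)$ in the worst case. Here I would place $u$ in the stronger space $X^{1/2,1/2}$, dualize against a test function $w$ with $\n{w}{X^{-1/2,-1/2}}=1$, and invoke Lemma~\ref{le:eta} to trade modulational weight of $\eta(\cdot/T)u$ from $X^{1/2,1/2}$ down to $X^{1/2,1/3+}$, at cost exactly $T^{1/6-}$ since $\tfrac{1}{2}-\tfrac{1}{3}=\tfrac{1}{6}$. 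The latter space feeds into the Bourgain embedding $X^{0,1/3}\hookrightarrow L^4_{t,x}$, and a H\"older arrangement pairing $\eta(\cdot/T) u$ and $w$ against the time-independent $v_0 \in L^\infty_t L^2_x$ closes the estimate.

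The principal obstacle is this second piece: the factor $|k_2|^{1/2}$ cannot be absorbed by $\n{v_0}{L^2_x}$ for free and must be redistributed onto $u$ and $w$ via $|k_2| \leq |k|+|k_1|$, broken dyadically across the regimes $|k|\ll|k_1|$, $|k|\sim|k_1|$, and $|k|\gg|k_1|$. The $T^{1/6-}$ exponent is tight — it exactly consumes the modulation trade-off between $X^{s,1/2}$ and $X^{s,1/3+}$ — so the estimate in each dyadic regime must be sharp enough not to squander any of this small gain, and in particular one must verify that the required Bernstein-type factor from $L^4_{t,x}$ never exceeds the derivative budget carried by $\lan{k}^{1/2}$ in the output.
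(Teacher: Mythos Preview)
Your overall architecture is sound and closely parallels the paper's: the symbol bound $|k^2+kk_1+k_1^2|^{-1}\lesssim \max\{\lan{k},\lan{k_1}\}^{-2}$ is the right starting point, and the resonance identity followed by a two-piece split in part~(2) is exactly what the paper does via its Case~1/Case~2 decomposition. Part~(1) via a Schur test is a legitimate alternative to the paper's Sobolev--H\"older route; both close without difficulty. Your first piece in part~(2) is correct and matches the paper's Case~1.

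The genuine gap is in your second piece. You correctly note that $\Theta(k,k_1)=\lan{k}^{1/2}|k_2|^{1/2}/(k^2+kk_1+k_1^2)^{1/2}\lesssim 1$, which already reduces this piece to controlling $\n{uv_0}{L^2_{t,x}}$. But the execution you propose---dualizing against $w$ with $\n{w}{X^{-1/2,-1/2}}=1$ and pairing both $u$ and $w$ through the $L^4_{t,x}$ embedding---cannot close. Once the triangle inequality has consumed the output modulation weight $\lan{\tau-k^3}^{1/2}$, the dual variable in the second piece sits only in $L^2_{t,x}$ (the $X^{-1/2,-1/2}$ weight has been spent), and $L^2_{t,x}$ does not embed into any $L^4_t$-based space. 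No H\"older arrangement with $v_0\in L^\infty_t L^2_x$ can then balance the exponents. Relatedly, your final paragraph's concern about redistributing $|k_2|^{1/2}$ via $|k_2|\le|k|+|k_1|$ and a dyadic split is misplaced: that factor is \emph{already} absorbed in the bound $\Theta\lesssim 1$, so no further case analysis is needed.

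The paper's route for this piece is both simpler and avoids the dualization trap: from $\n{uv_0}{L^2_{t,x}}\le \n{u}{L^2_tL^\infty_x}\n{v_0}{L^2_x}$, apply the one-dimensional Sobolev embedding $W^{1/4+,4}_x\hookrightarrow L^\infty_x$ together with the Bourgain embedding $X^{0,1/3}\hookrightarrow L^4_{t,x}$ to reach $\n{u}{X^{1/4+,1/3}}\le\n{u}{X^{1/2,1/3}}$; then Lemma~\ref{le:eta} supplies the factor $T^{1/6-}$ when passing from $X^{1/2,1/2}$ to $X^{1/2,1/3+}$. No dualization, no dyadic decomposition.
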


\begin{proof}
First, we will show $T: H^s_x L^1_\tau \times L^2_x \to H^{s+1}_x L^1_\tau$ for which we estimate $\n{T(u,v_0)}{H^{s+1}_x L^1_\tau}$. 
$$
\cf_{x}[T(u,v_0)] (k)  =  \sum_{k_1 + k_2 = k}\f{i\lan{k}^{s+1}}{k_1^2 + k^2 + k_1 k} u_{k_1} (v_0)_{k_2} 
$$
When $s=0$, the fraction in the summand abound is bounded by $\lan{k}^{-1}$.  Applying Sobov embedding,
$$
\n{T(u,v_0)}{H^{s+1}_x L^1_\tau} \lesssim \n{ J_x^{-1} (u v_0)}{L^2_x L^1_{\tau}}  \lesssim \n{u v_0}{L^1_{\tau,x}}\lesssim  \n{u}{L^2_x L^1_\tau} \n{v_0}{L^2_x}.
$$
For $s=1$, this fraction is bounded by $1$.  Hence,
$$
\n{T(u,v_0)}{H^2_x L^1_\tau}\lesssim \n{u v_0}{L_x^2 L^1_\tau}\lesssim  \n{u}{L^\infty_x L^1_\tau} \n{v_0}{L^2_x}\lesssim \n{u}{H^1_x L^1_\tau} \n{v_0}{L^2_x}
$$
Next, we will show $T: X^{\f{1}{2},\f{1}{2}} \times L^2_x \to X^{\f{1}{2}, \f{1}{2}}$, which involves estimating $\n{T(u,v_0)}{X^{\f{1}{2},\f{1}{2}}}$ with assumptions $u \in X^{0,\f{1}{2}}$ and $v_0 \in L^2$.  Using algebraic identity \eqref{eq:alg1} with $\tau_2 = 0$ and $\tau_1 = \tau$, we have 
 	$$
 	\max\{ \lan{\tau - k^3}, \lan{\tau - k_1^3} \} \gtrsim |k_2|\, (k_1^2+ k^2 + k_1 k) \gtrsim \lan{k_2} \max\{k^2, k_1^2\} = H_1.
 	$$
We split into two cases:\\

\textbf{Case 1.}  $\lan{\tau - k^3}\sim \lan{\tau - k_1^3} \gg H_1$ \\

Denoting $U  := \cf^{-1}_{k_1,\tau} \left[\lan{\tau -k_1^3}^{\f{1}{2}} \wt{u}(\tau,k)\right]$, we have
$$
\n{T(u,v_0)}{X^{\f{1}{2},\f{1}{2}}} \lesssim \n{\f{\lan{k}^{\f{1}{2}}}{k_1^2 + k^2 + k_1 k} \cf_{t,x} \left[U  v_0\right] }{\ell^2_k L^2_{\tau}}\lesssim \n{ U v_0 }{L^2_t L^1_{x}}\lesssim \n{U}{L^2_{t,x} } \n{v_0}{L^2_{x}}.
$$

\textbf{Case 2.} $\lan{\tau -  k^3} \lesssim H_1$.  In this case, note
$$
\f{\lan{k}^\f{1}{2}\lan{\tau - k_1^3}^{\f{1}{2}}  }{ k_1^2 + k^2 + k_1 k } \lesssim \f{ \lan{k}^{\f{1}{2}} \lan{k_2}^{\f{1}{2}}}{  (k_1^2 + k^2 + k_1 k)^{\f{1}{2}}} \lesssim 1.
$$
Hence, 
$$
\n{T(u,v_0)}{X^{\f{1}{2},\f{1}{2}}} \lesssim \n{ u v_0 }{L^2_{t,x}}\lesssim \n{u}{L^2_t L^\infty_x} \n{v_0}{L^2_{x}} \lesssim \n{J_x^{\f{1}{4}+} u}{L^4_{t,x}} \n{v_0}{L^2_x}
$$
where the $u$-factor on the RHS is bounded by $\n{u}{X^{\f{1}{2}, \f{1}{3}}}$ using $L^4_{t,x}$ embedding, yielding a $T^{\f{1}{6}-}$ factor using Lemma~\ref{le:eta}.  This gives \eqref{eq:t2}.
\end{proof}

\subsection{Remaining multilinear estimates in Bourgain spaces}

Now, we turn to \eqref{eq:3}. Since $u = T(u,v_0) + w$, applying \eqref{eq:t2},
$$
\n{u}{X^{\f{1}{2}, \f{1}{2}}} \lesssim \n{u}{X^{0,\f{1}{2}}} \n{v_0}{L^2_x} + T^{\f{1}{6}-} \n{ u}{X^{\f{1}{2}, \f{1}{2}}} \n{v_0}{L^2} +\n{w}{X^{\f{1}{2},\f{1}{2}}}
$$
so that $\n{u}{X^{\f{1}{2}, \f{1}{2}}} \lesssim \n{u}{X^{0,\f{1}{2}}} \n{v_0}{L^2_x}+ \n{w}{X^{\f{1}{2},\f{1}{2}}}$ where $T>0$ only depends on $\n{v_0}{L^2_x}$.  Furthermore, 
$$
\n{u}{H^1_x L^1_\tau} \lesssim \n{T(u,v_0)}{H^1_x L^1_\tau} + \n{w}{H^1_x L^1_\tau} \lesssim \n{u}{L^2_x L^1_\tau} + \n{w}{H^1_x L^1_\tau}.
$$
Since $u \in L^2_x L^1_\tau \cap X^{0,\f{1}{2}}$ from the proof of Theorem~\ref{th:wp}, we can see that $u \in X^{\f{1}{2}, \f{1}{2}} \cap H^1_x L^1_\tau$ if $w \in X^{1, \f{1}{2}} \cap H^1_x L^1_\tau$.  This motivates  our target space for $(w,z)$ which is 
$$
(w,z) \in (X^{1, \f{1}{2}} \cap H^1_x L^1_\tau) \times  H^{\f{1}{2}}_t H^1_x
$$

The following lemma establishes that $z$ lies within the target space $H^{\f{1}{2}}_t H^1_x$.  All subsequent lemmas in this section will be able to yield a positive power of $T$ on the RHS if $\eta(t/T)$ is put within the norms on the LHS.  This implication will be apparent within the proof by the fact, for instance, that the estimate ends in the norm $X^{\f{1}{2}, \f{1}{3}}$ rather than $X^{\f{1}{2}, \f{1}{2}}$.  We will not include these within each statement.

\begin{lemma}\label{le:z}
For $u\in X^{\f{1}{2},\f{1}{2}}$, 
$$
\n{\p_x (u^2)}{H^{-\f{1}{2}}_t H^1_x} \lesssim \n{u}{X^{\f{1}{2},\f{1}{2}}}^2.
$$
\end{lemma}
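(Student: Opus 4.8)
The plan is to exploit the algebraic identity already used for \eqref{eq:est2}, namely
$$
\tau = (\tau_1 - k_1^3) + (\tau_2 - k_2^3) + k(k_1^2 + k_2^2 - k_1 k_2),
$$
together with the mean-zero assumption that forces $k = k_1 + k_2 \neq 0$ in every interaction. This gives the resonance bound
$$
\max\{\lan{\tau}, \lan{\tau_1 - k_1^3}, \lan{\tau_2 - k_2^3}\} \gtrsim \lan{k}\max\{k_1^2, k_2^2\} = H_2,
$$
exactly as in the proof of \eqref{eq:est2}. The new feature relative to \eqref{eq:est2} is the extra derivative $\lan{k}$ demanded by the target norm $H^{-\f{1}{2}}_t H^1_x$: we must place $\lan{k}^{-\f{1}{2}}\cdot \lan{k}\cdot|k| = \lan{k}^{3/2}|k|$-type weights and absorb them using the two factors $\lan{k_1}^{1/2}$, $\lan{k_2}^{1/2}$ coming from $u \in X^{\f{1}{2},\f{1}{2}}$ and the gain $\lan{k}^{-1}\max\{k_1^2,k_2^2\}^{-1/2} \lesssim \lan{k}^{-1}\min\{k_1,k_2\}^{-1/2}$ from $H_2$. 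Since $\max\{|k_1|,|k_2|\} \gtrsim |k|$, one checks
$$
\f{\lan{k}^{1} |k|}{\lan{\tau}^{1/2} \lan{k_1}^{1/2}\lan{k_2}^{1/2}} \lesssim \f{\lan{k}^{2}}{H_2^{1/2} \lan{k_1}^{1/2}\lan{k_2}^{1/2}} \lesssim \f{\lan{k}^{3/2}}{\lan{k_1}^{1/2}\lan{k_2}^{1/2}\min\{|k_1|,|k_2|\}^{1/2}} \lesssim \f{\lan{\max}^{1/2}}{\lan{\min}^{1/2}\min\{|k_1|,|k_2|\}^{1/2}} \lesssim 1,
$$
so the total weight is in fact bounded — the derivatives balance perfectly, which is why $X^{\f{1}{2},\f{1}{2}}$ is the natural space here.

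I would then split into the same three cases as for \eqref{eq:est2}, reduced by symmetry to two. In \textbf{Case 1}, $\lan{\tau}\sim H_2$: as in the earlier argument, the $L^1_\tau$-type second term is dominated by the $X$-norm term, and it remains to bound $\n{\p_x(u^2)}{H^{-\f{1}{2}}_t H^1_x}$, which by the weight computation above is $\lesssim \n{J_x^{1/2}u \cdot J_x^{1/2}u}{L^2_{t,x}} \lesssim \n{J_x^{1/2}u}{L^4_{t,x}}^2 \lesssim \n{u}{X^{1/2, 1/3}}^2$ by the $X^{0,1/3}\hookrightarrow L^4_{t,x}$ embedding (this is where the $T$-power would come from). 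In \textbf{Case 2}, $\lan{\tau_1 - k_1^3}\gtrsim H_2$: set $U := \cf^{-1}[\lan{\tau_1 - k_1^3}^{1/2}\wt u]$, move $\f13$ of this modulation weight onto $U$, absorb the remaining symbol, and estimate by H\"older plus Sobolev and the $L^4_{t,x}$ embedding, e.g. $\n{J_x^{1/2}U \cdot J_x^{1/2}u}{\text{appropriate}} \lesssim \n{J_x^{1/2}U}{L^2_{t,x}}\n{J_x^{1/2+}u}{L^4_{t,x}} \lesssim \n{u}{X^{1/2,1/2}}\n{u}{X^{1/2,1/3+}}$, since $\n{J_x^{1/2}U}{L^2_{t,x}} = \n{u}{X^{1/2,1/2}}$.

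The main obstacle — though it is more bookkeeping than genuine difficulty — is verifying that the derivative counting closes \emph{exactly} at the $X^{\f12,\f12}$ level with no loss: one has three derivatives to distribute ($\lan{k}$ from the target Sobolev gain on $v$, one from $\p_x$, and effectively $-\f12$ from the $H^{-1/2}_t$) against only $\lan{k_1}^{1/2}\lan{k_2}^{1/2}$ from the two copies of $u$ plus the $H_2^{-1/2}$ resonance gain, and the identity $\max\{|k_1|,|k_2|\}\gtrsim|k|$ must be invoked in precisely the right place in each case. A secondary point is handling the low-frequency output $|k|$ small (but nonzero): there $\lan{k}\sim 1$ and the estimate only gets easier, but one should note it so the symbol bounds above are uniform. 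Once the symbol bounds are in hand, every case reduces to a H\"older/Sobolev/$L^4_{t,x}$ computation identical in structure to Lemma~\ref{le:1}, so I would state those briefly and omit the repeated details.
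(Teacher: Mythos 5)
Your overall route is the same as the paper's: the same algebraic identity for $\tau=(\tau_1-k_1^3)+(\tau_2-k_2^3)+k(k_1^2+k_2^2-k_1k_2)$, the same resonance function $H_2=\lan{k}\max\{k_1^2,k_2^2\}$, the same case split on which modulation is dominant, the substitution $U=\cf^{-1}[\lan{\tau_1-k_1^3}^{1/2}\wt u]$ in Case~2, and the same key observation that the extra $\lan{k}$ from $H^1_x$ is absorbed by the two $\lan{k_1}^{1/2}\lan{k_2}^{1/2}$ factors supplied by $u\in X^{1/2,\cdot}$ together with the $H_2^{-1/2}$ gain. One small inaccuracy: you refer to the $L^1_\tau$-type second term being dominated by the $X$-norm, but Lemma~\ref{le:z} has no $L^1_\tau$ component on its left-hand side (the paper explicitly notes in Case~2 that it is \emph{not} concerned with $L^1_\tau$ here), so that remark is vestigial.

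There is, however, a genuine error in your displayed symbol-bound chain in Case~1. You claimed
$$
\frac{\lan{k}^{2}}{H_2^{1/2}\,\lan{k_1}^{1/2}\lan{k_2}^{1/2}}
\lesssim \frac{\lan{k}^{3/2}}{\lan{k_1}^{1/2}\lan{k_2}^{1/2}\,\min\{|k_1|,|k_2|\}^{1/2}}
\lesssim \frac{\lan{\max}^{1/2}}{\lan{\min}^{1/2}\,\min\{|k_1|,|k_2|\}^{1/2}}\lesssim 1.
$$
The first inequality is wasteful (it trades $\max\{|k_1|,|k_2|\}$ for $\min\{|k_1|,|k_2|\}^{1/2}$, a large loss in the unbalanced regime), the second inequality's numerator should read $\lan{\max}$ rather than $\lan{\max}^{1/2}$ (since $\lan{k}^{3/2}\lesssim\lan{\max}^{3/2}$ and $\lan{k_1}^{1/2}\lan{k_2}^{1/2}=\lan{\max}^{1/2}\lan{\min}^{1/2}$), and the resulting quantity $\lan{\max}/(\lan{\min}^{1/2}\min^{1/2})$ is \emph{not} $\lesssim 1$: take $|k_1|\sim N$, $|k_2|\sim 1$ and it blows up like $N$. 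The conclusion that the symbol is bounded is nevertheless true, but the chain as written does not show it. The fix is to \emph{not} discard $\max$ prematurely: since $H_2^{1/2}=\lan{k}^{1/2}\max$ and $\lan{k}\lesssim\max$, one has
$$
\frac{\lan{k}^2}{H_2^{1/2}}=\frac{\lan{k}^{3/2}}{\max}\lesssim\lan{k}^{1/2}\lesssim\lan{k_1}^{1/2}\lan{k_2}^{1/2},
$$
which is exactly the paper's one-line observation (use $|k|\lesssim\max\{|k_1|,|k_2|\}$). This is also uniform in the degenerate cases $k_1=0$ or $k_2=0$, where your intermediate expression $\min^{-1/2}$ would be infinite. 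With this correction, the rest of your Case~1 and your sketch of Case~2 match the paper's proof.
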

\begin{proof}
We can proceed in an analogous manner to the proof of \eqref{eq:est2} where the dispersive gain factor is given by $H_2$ in the previous proof.  Consider the same two cases as before:\\

\textbf{Case 1:} $\lan{\tau} \sim H_2$.\\

In the original proof, note that we have disposed of $\lan{k}^{-\f{1}{2}}$ gain within the estimate.  Here, we will use this gain, along with the fact $|k|\lesssim \max \{|k_1|, |k_2|\}$ which immediately yields the desired estimate.\\

\textbf{Case 2:} $\lan{\tau_1 -k_1^3}\gtrsim H_2$.

In this case, we slightly modify the previous proof to accommodate for the change.  Note that we are not concerned with $L^1_\tau$ estimates, so we can keep the full $\lan{\tau}^{-\f{1}{2}}$ gain for the estimate.  We have $\lan{k}^2/ H_2^{\f{1}{2}} \lesssim \lan{k}^{\f{1}{2}}$.  Applying Sobolev embedding and denoting $U := \cf_{k_1, \tau_1}^{-1}[ \lan{\tau_1- k_1^3}^{\f{1}{2}} \wt{u}(\tau,k)]$, 
\begin{align*}
\n{\p_x (u^2)}{H^{-\f{1}{2}}_t H^1_x} &\lesssim \n{\f{\lan{k}^2}{H^{\f{1}{2}}} \cf_{x} [U u]}{L^1_t \ell^2_k} \lesssim  \n{Uu}{L^1_t H^{\f{1}{2}}_x } \\
&\lesssim  \n{J^{\f{1}{2}}_x U}{L^2_t L^2_x }\n{u}{L^2_t L^\infty_x }+\n{ U}{L^2_t L^4_x }\n{J^{\f{1}{2}}_x u}{L^2_t L^4_x }\\
&\lesssim  \n{u}{X^{\f{1}{2}, \f{1}{2}} }\n{J_x^{\f{1}{4}+} u}{L^2_t L^4_x }+\n{ J_x^{\f{1}{4}} U}{L^2_t L^2_x }\n{J^{\f{1}{2}}_x u}{X^{\f{1}{2},\f{1}{3}} } \lesssim  \n{u}{X^{\f{1}{2}, \f{1}{2}} }^2
\end{align*}

\end{proof}

It remains to obtain necessary estimate for the $w$ equation in \eqref{eq:3}.  Writing  $u = T(u,v_0) + w$ in one of the occurences of $u$, the equation for $w$ is written as 
\begin{equation}\label{eq:ww}
w_t  + w_{xxx}= - w_x v - u z_x   - (T(u,v_0))_x v - T(\p_x (uv), v_0)
\end{equation}
We note that there are two quadratic terms and two trilinear terms on the RHS above.  We handle these estimates in two separate lemmas.
\begin{lemma} 
	\label{le:7} 
Let $u\in X^{\f{1}{2},\f{1}{2}}$, $v \in H^{\f{1}{2}}_t L^2_x$, $w \in X^{1,\f{1}{2}} \cap H^1_x L^1_\tau$, and $z \in H^{\f{1}{2}}_t H^1_x$.  Also, assume that $v$ and $z$ satisfy the mean-zero condition.  Then, 
\begin{align}
  \n{w_x v }{X^{1, -\f{1}{2}}} +  \n{\lan{k}\f{\cf[w_x v]}{\lan{\tau - k^3}} }{\ell^2_k L^1_\tau }  &\lesssim \n{w}{X^{1,\f{1}{2}}} \n{v}{H^{\f{1}{2}}_t L^2_x}\label{eq:est5}\\  
  \n{u z_x }{X^{1, -\f{1}{2}}}+\n{\lan{k}\f{\cf[u z_x]}{\lan{\tau -k^3}}  }{\ell^2_k L^1_\tau} &\lesssim \n{u}{X^{0,\f{1}{2}}} \n{z}{H^{\f{1}{2}}_t H^1_x}\label{eq:est6}
  \end{align}  
\end{lemma}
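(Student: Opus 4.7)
The plan is to follow the template of the proof of \eqref{eq:est1}, exploiting the same algebraic identity for the KdV phase:
$$
\tau - k^3 = (\tau_1 - k_1^3) + \tau_2 - k_2(k_1^2 + k^2 + k_1 k),
$$
where $(\tau_1, k_1)$ and $(\tau_2, k_2)$ denote the space-time Fourier frequencies of the first and second factors, with $\tau_1+\tau_2=\tau$ and $k_1+k_2 = k$. The mean-zero assumptions on $v$ and $z$ force $k_2\neq 0$, and combined with $k_1^2 + k^2 + k_1 k \gtrsim \max\{k^2, k_1^2\}$, this yields
$$
\max\{\lan{\tau - k^3}, \lan{\tau_1 - k_1^3}, \lan{\tau_2}\} \gtrsim H_1 := \lan{k_2}\max\{k^2, k_1^2\}.
$$
As in Lemma~\ref{le:1}, I would first reduce the $L^1_\tau$ term on each LHS to an $X^{1,-b}$ norm with $b = 1/3$ in the sub-cases where $\lan{\tau-k^3}$ is not the dominant modulation, and to $X^{1,-1/2}$ otherwise. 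It then suffices to estimate $\n{w_x v}{X^{1,-b}}$ and $\n{u z_x}{X^{1,-b}}$ in each of three frequency sub-cases indexed by which modulation dominates $H_1$.

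For \eqref{eq:est5}, the Fourier symbol for $w_x v$ carrying the $\lan{k}$ gain is $\lan{k}|k_1|$. Dividing by the target-norm weights $\lan{k_1}\lan{\tau_1-k_1^3}^{1/2}$ on $w$ and $\lan{\tau_2}^{1/2}$ on $v$, and distributing the dispersive gain $H_1^{1/2}$, the resulting symbol simplifies to $\min\{|k_1|,|k_2|\}^{-1/2}$ using $|k|\lesssim \max\{|k_1|,|k_2|\}$. This is identical to the symbol treated in the proof of \eqref{eq:est1} after the formal substitution $u\mapsto J_x w$, so I would proceed through the three-case analysis exactly as there: Case 1 ($\lan{\tau-k^3}\sim H_1$) by H\"older and Sobolev embedding together with $X^{0,1/3}\hookrightarrow L^4_{t,x}$; Cases 2 and 3 by duality against $X^{0,1/3}$ and H\"older into $L^4_{t,x}$ and $L^4_tL^2_x$ factors. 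For \eqref{eq:est6}, the analogous symbol $\lan{k}|k_2|$ for $u z_x$ is divided by $\lan{k_2}\lan{\tau_2}^{1/2}$ on $z$ and $\lan{\tau_1-k_1^3}^{1/2}$ on $u$: the factor $|k_2|$ is immediately absorbed by $\lan{k_2}$ coming from the $H^1_x$ regularity of $z$, and the remaining $\lan{k}$ loss is compensated by the dispersive gain since $H_1^{1/2}\gtrsim|k|\lan{k_2}^{1/2}$. The same three-case analysis then closes the estimate, with $z$ placed in $L^4_{t,x}$ or $L^4_tL^2_x$-type norms after distributing its $H^{1/2}_t$ and $H^1_x$ weights.

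The main obstacle will be controlling the derivative loss $\lan{k}$ on the LHS: the estimates succeed only because in every sub-case the dominant modulation provides at least $\lan{k_2}^{1/2}|k|$ of dispersive gain. This in turn requires $\lan{k_2}\sim |k_2|$, which is exactly what the mean-zero hypotheses on $v$ and $z$ guarantee; without the mean-zero conditions the estimates would fail due to resonances at $k_2 = 0$. Beyond that, the remaining work is careful bookkeeping of sub-cases and Lebesgue indices, all of which are routine adaptations of the arguments already deployed in the proof of Lemma~\ref{le:1}.
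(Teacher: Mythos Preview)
Your proposal is correct and takes essentially the same approach as the paper. The paper is even more terse: it simply remarks that \eqref{eq:est5} and \eqref{eq:est6} reduce directly to \eqref{eq:est1} (with $s=0$) via the substitutions $u\mapsto w_x\in X^{0,\f{1}{2}}$ and $v\mapsto z_x\in H^{\f{1}{2}}_tL^2_x$ respectively, without re-running the three-case analysis you outline.
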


We remark that these bilinear estimates are both equivalent to \eqref{eq:est1}.  For instance, if $w\in X^{1, \f{1}{2}}$, then $w_x \in X^{0,\f{1}{2}}$.  Therefore, estimating $\n{w_x v}{X^{1,-\f{1}{2}}}$ for  is equivalent to estimating $\n{\p_x (uv)}{X^{0,-\f{1}{2}}}$.  Hence, \eqref{eq:est5} follows from \eqref{eq:est1}.  The same can be said about \eqref{eq:est6} since $z\in H^{\f{1}{2}}_t H_x^1$ implies $z_x \in H^{\f{1}{2}}_t L^2_x$.\\

Next, we consider the necessary trilinear estimates.
  
\begin{lemma} 
	\label{le:8} 
Let $u\in X^{0,\f{1}{2}}$, $v_0 \in L^2_x$, and $v \in H^{\f{1}{2}}_t L^2_x$.  Also, assume that $v_0$ and $v$ satisfy the mean-zero condition. 
\begin{align}
  \n{(T(u,v_0))_x v }{X^{1, -\f{1}{2}}} + \n{\lan{k}\f{\cf[(T(u,v_0))_x v ]}{\lan{\tau - k^3}} }{\ell^2_k L^1_\tau }&\lesssim \n{u}{X^{0,\f{1}{2}}}\n{v_0}{L^2_x} \n{v}{H^{\f{1}{2}}_t L^2_x}\label{eq:est7}\\  
   \n{T(\p_x (uv), v_0) }{X^{1, -\f{1}{2}}}+ \n{\lan{k}\f{\cf[T(\p_x (uv), v_0)]}{\lan{\tau - k^3}} }{\ell^2_k L^1_\tau } &\lesssim \n{u}{X^{0,\f{1}{2}}} \n{v}{H^{\f{1}{2}}_t L^2_x} \n{v_0}{L^2_x}\label{eq:new}
  \end{align}
  
\end{lemma}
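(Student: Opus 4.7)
\bigskip

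\noindent
\textbf{Proof plan for Lemma~\ref{le:8}.} The unifying observation is that the symbol of $T$ gains two derivatives, since the algebraic identity
$$
k_1^2 + k^2 + k_1 k = \f{3}{4}k_1^2 + \(k + \f{k_1}{2}\)^2 = \f{3}{4}k^2 + \(k_1 + \f{k}{2}\)^2
$$
yields $|k_1^2 + k^2 + k_1 k| \gtrsim \max\{k_1^2, k^2\}$. Combined with the single derivative coming from $\p_x$ in both estimates, this produces a net one-derivative gain on the trilinear expression, which is exactly what is needed to place the output in $X^{1,-\f{1}{2}}$ instead of the typical $X^{\f{1}{2},-\f{1}{2}}$. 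The overall strategy mirrors the proof of \eqref{eq:est1}: perform a case split based on which of the modulations $\lan{\tau - k^3}$, $\lan{\tau_1 - k_1^3}$, $\lan{\tau_2}$ dominates, and use the associated dispersive weight alongside the $T$-symbol gain. The $\ell^2_k L^1_\tau$ component of each LHS is absorbed by replacing $X^{s,-\f{1}{2}}$ by $X^{s,-\f{1}{3}}$, as in the $L^1_\tau$ reduction performed in the proof of Lemma~\ref{le:1}, which affords the $L^4_{t,x}$ Bourgain embedding and also yields a spare $T^{0+}$ when time cutoffs are inserted.

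For \eqref{eq:est7}, let the frequencies of $u$, $v_0$, and $v$ be $\xi_u$, $\xi_0 \neq 0$, and $\xi_v$, with $\tau_u$ and $\tau_v$ the corresponding time frequencies. Denote $\xi_1 = \xi_u + \xi_0$ (the intermediate frequency of $P:=T(u,v_0)$) and $k = \xi_1 + \xi_v$. The Fourier symbol of $P_x$ is
$$
\f{\xi_1}{\xi_u^2 + \xi_1^2 + \xi_u \xi_1} \lesssim \f{1}{\max\{|\xi_u|, |\xi_1|\}}.
$$
I would split into two sub-regimes. When $|\xi_1| \gtrsim |\xi_v|$, one has $|k| \lesssim |\xi_1|$ so the symbol gain absorbs the prefactor $\lan{k}$, reducing the bound to an $X^{0,-\f{1}{2}}$-type estimate that falls under the scope of \eqref{eq:est1} with $s = 0$, with $\widehat{v_0}(\xi_0)$ summed via Cauchy-Schwarz to produce $\n{v_0}{L^2_x}$. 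When instead $|\xi_v| \gg |\xi_1|$, so that $|k| \sim |\xi_v|$, the symbol gain alone no longer compensates for $\lan{k}$; here I would invoke the KdV dispersion identity
$$
\tau - k^3 = (\tau_u - \xi_1^3) + \tau_v - \xi_v(\xi_1^2 + k^2 + \xi_1 k),
$$
noting $\xi_1^2 + k^2 + \xi_1 k \gtrsim k^2 \sim \xi_v^2$, so that $\max\{\lan{\tau - k^3}, \lan{\tau_u - \xi_1^3}, \lan{\tau_v}\} \gtrsim |\xi_v|^3$. The subcase where $\lan{\tau - k^3}$ dominates yields the $\lan{k}^{3/2}$ gain directly in the output norm; the other two cases recover $\n{u}{X^{0,\f{1}{2}}}$ or $\n{v}{H^{\f{1}{2}}_t L^2_x}$ on the appropriate factor, with the residual $|\xi_v|^{3/2 - 1} = |\xi_v|^{1/2}$ used through a Sobolev or $L^4_{t,x}$-based estimate. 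This closes the bound.

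For \eqref{eq:new}, let $k_1$ denote the Fourier frequency of $\p_x(uv)$ and $\xi_0$ that of $v_0$, with $k = k_1 + \xi_0$ and $k_1 = \xi_u + \xi_v$. The combined symbol from $T$ and $\p_x$ is
$$
\f{i k_1}{k_1^2 + k^2 + k_1 k} \lesssim \f{1}{\max\{|k_1|, |k|\}}.
$$
The plan is analogous: if $|k_1| \gtrsim |\xi_0|$ (so $|k| \lesssim |k_1|$), the one-derivative gain cancels $\lan{k}$ and the estimate reduces to $\n{\p_x(uv)}{X^{0, -\f{1}{2}}} \lesssim \n{u}{X^{0, \f{1}{2}}} \n{v}{H^{\f{1}{2}}_t L^2_x}$, which is precisely \eqref{eq:est1} with $s=0$, then multiply by the summable coefficient $\widehat{v_0}(\xi_0)$. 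If instead $|\xi_0| \gg |k_1|$, the output frequency $|k| \sim |\xi_0|$ is concentrated at $v_0$, and since $v_0$ is independent of $\tau$, one obtains an additional contribution to $\lan{\tau - k^3}$ from the KdV dispersion identity
$$
\tau - k^3 = (\tau_u - \xi_u^3) + (\tau_v - \xi_v^3) + k(\xi_u^2 + \xi_v^2 - \xi_u \xi_v) + \text{correction from }\xi_0,
$$
which again places at least one modulational weight at size $\gtrsim |k|^3$; this excess absorbs the $\lan{k}$ loss, after which the $L^2_x$ mass of $v_0$ is summed in $\xi_0$ via Cauchy-Schwarz.

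The main obstacle is the high-$v$ (respectively high-$v_0$) regime where the $T$-symbol gain alone does not suffice: in each case, it is necessary to locate the dispersive resonance with respect to the \emph{output} frequency, not the intermediate frequency of $T$, and to verify that the $\lan{\tau - k^3}$, $\lan{\tau_1 - k_1^3}$, $\lan{\tau_2}$ cluster always contains a term of size $\gtrsim |k|^3$ large enough to absorb the full $\lan{k}$-loss. Once this is confirmed in every subregime, the remaining estimates proceed by the same Sobolev, $L^4_{t,x}$, and $L^\infty_t L^2_x$ Bourgain embeddings used in the proof of \eqref{eq:est1} and \eqref{eq:est2}, with the slight weight reduction $-\f{1}{2} \to -\f{1}{3}$ again producing a small positive power of $T$ for the later contraction argument.
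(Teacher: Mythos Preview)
Your decomposition is different from the paper's, and there is a genuine gap. In your second regime you write the identity
\[
\tau - k^3 = (\tau_u - \xi_1^3) + \tau_v - \xi_v(\xi_1^2 + k^2 + \xi_1 k),
\]
but $\lan{\tau_u - \xi_1^3}$ is \emph{not} the modulation carried by $\n{u}{X^{0,\f{1}{2}}}$; that norm is built from $\lan{\tau_u - \xi_u^3}$. The transfer costs $|\xi_u^3 - \xi_1^3| = |\xi_0|(\xi_u^2 + \xi_1^2 + \xi_u\xi_1)$, which can be the same size as your claimed gain, so the subcase ``$\lan{\tau_u - \xi_1^3}$ dominates'' does not recover $\n{u}{X^{0,\f{1}{2}}}$ as stated. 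The paper instead uses the identity tied to the true $u$--modulation,
\[
\tau - k^3 = (\tau_u - \xi_u^3) + \tau_v - (\xi_0 + \xi_v)(\xi_u^2 + k^2 + \xi_u k),
\]
giving $H_3 := |\xi_0 + \xi_v|\max\{\xi_u^2, k^2\}$. This exposes a \emph{trilinear resonance} at $\xi_0 + \xi_v = 0$, where $H_3 = 0$ and no dispersive gain is available. At resonance one has $k = \xi_u$, the combined weight $\lan{k}\cdot|\text{symbol of }T\p_x|$ is uniformly bounded, and the paper closes by Cauchy--Schwarz in the single remaining sum over $\xi_0 = -\xi_v$ to produce $\n{v_0}{L^2}\n{v}{L^4_t L^2_x}$. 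Off resonance, using $H_3^{\f{1}{2}}$ gives the pointwise bound $\lan{\xi_u}^{-1}\lan{\xi_0+\xi_v}^{-\f{1}{2}}$ (the paper's \eqref{eq:triest1}), after which H\"older and the $L^4$ embedding close all three modulation cases uniformly.

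Your first-regime reduction ``to \eqref{eq:est1} with $s=0$'' is also not justified: after the $T$--symbol absorbs $\lan{k}$, you are left with a genuine trilinear in $X^{0,-\f{1}{2}}$, and \eqref{eq:est1} is a bilinear estimate whose modulation identity does not apply here --- in particular it offers nothing at the resonance $\xi_0 + \xi_v = 0$, which can certainly occur inside your first regime. The same resonance issue arises for \eqref{eq:new}: your closing assertion that in the high-$\xi_0$ regime ``at least one modulational weight [is] at size $\gtrsim |k|^3$'' is false exactly when $\xi_v + \xi_0 = 0$.
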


\begin{proof}
First, consider \eqref{eq:est7}.  We begin by observing the corresponding algebraic identity:
\begin{equation} \label{eq:alg2}
\tau - k^3 = (\tau_1 -k_1^3) + \tau_3 - (k_2 + k_3)(k_1^2 + k^2 + k_1 k)
\end{equation}
where $\tau_1 + \tau_3 = \tau$, $k_1 + k_2 + k_3 = k$ and $k_2, k_3 \neq 0$.  Hence, the trilinear modulation estimate is given by
$$
\max\{|\tau - k^3|,|\tau_1 -k_1|, |\tau_3|\}\gtrsim |k_2 + k_3| \max\{k_1^2, k^2\} =: H_3 
$$
We note that $H_3$ can vanish $k_2 + k_3= 0$.  Such frequency interaction is referred to as a trilinear resonance, where nonlinear dispersive effects cancel each other, leaving the solution with zero dispersion at the very specific frequency interaction regime.  While we can assume $k_2, k_3 \neq 0$ due to the mean-zero assumption, we cannot assume that $k_2 + k_3 \neq 0$.\\

Before splitting into cases as we had done previously, let us first manage the estimate in the resonant case $k_2 + k_3=0$.  Here, we do not need to use the modulational weights, and we can control both terms on the LHS of \eqref{eq:est7} by $\n{J_x \left[(T(u,v_0))_x v \right]}{L^2_{t,x}}$.  For all cases of frequency interaction, we can write
\begin{equation}\label{eq:tri1}
\cf_x [ J_x\left[ (T(u,v_0))_x v\right]] (k) =  \sum_{\tiny \begin{array}{c} k_1 + k_2 + k_3 = k\\ k_2, k_3 \neq 0\end{array}} \f{ (k_1 + k_2)\lan{k}}{k_1^2 + (k_1+ k_2)^2 + k_1 (k_1 + k_2)} u_{k_1} (v_0)_{k_2} v_{k_3}.
\end{equation}
Specifically for the resonant case, the expression reduces to 
$$
 \sum_{ k_2 \neq 0} \f{ (k + k_2)\lan{k}}{k^2 + (k+ k_2)^2 + k (k + k_2)} u_{k} (v_0)_{k_2} v_{-k_2}.
$$
Note that the fraction within the summand above is bounded by 1.  Applying Cauchy-Schwartz inequality for $k_2$, we obtain
\begin{align*}
\n{J_x (T(u,v_0))_x v }{L^2_{t,x}} &\lesssim \n{ u  \n{v_0}{L^2_x} \n{v}{L^2_x}   }{L^2_{t,x}} \lesssim \n{ u   }{L^4_t L^2_x} \n{v_0}{L^2_x} \n{v}{L^4_t L^2_x} \lesssim \n{ u   }{X^{0,\f{1}{2}}} \n{v_0}{L^2_x} \n{v}{H^{\f{1}{2}}_t L^2_x}
\end{align*}

For the remaining cases, we can assume that the resonance does not occur (that is, $k_2 + k_3 \neq 0$).   Considering \eqref{eq:tri1}, the fraction within the summand is bounded by $\lan{k}/\max\{|k_1|, |k_1+k_2|\}$.  Unlike the resonant case, this quantity may be unbounded, in particular when $|k|\sim |k_3| \gg \max\{|k_1|, |k_2|\}$.  In this case, our modulational gain (that is, $H_3$) must be used to control this weight. More precisely, we will use the following estimate:
\begin{equation}\label{eq:triest1}
\left|\f{ (k_1 + k_2)\lan{k}}{H_3^{\f{1}{2}} \left(k_1^2 + (k_1+ k_2)^2 + k_1 (k_1 + k_2)\right)}\right| \lesssim \lan{k_1}^{-1} \lan{k_2 + k_3}^{-\f{1}{2}}
\end{equation}
  We split into the following cases:
\begin{enumerate}
\item $\lan{\tau - k^3}\sim H_3$
\item $\lan{\tau_1 -k_1}\gtrsim H_3$
\item $\lan{\tau_3} \gtrsim H_3$
\end{enumerate}

\textbf{Case 1:} $\lan{\tau - k^3}\sim H_3$.\\

As before, in this case, it suffices to estimate $\n{(T(u,v_0))_x v }{X^{1, -\f{1}{2}}}$.  Using \eqref{eq:triest1},
\begin{align*}
\n{(T(u,v_0))_x v }{X^{1, -\f{1}{2}}}&\lesssim \n{(J_x^{-1}u) J_x^{-\f{1}{2}} (v_0 v) }{L^2_{t,x}}\lesssim \n{J_x^{-1}u}{L^4_{t} L^\infty_x} \n{ J_x^{-\f{1}{2}} (v_0 v) }{L^4_{t} L^2_x}\\
&\lesssim \n{u}{L^4_{t,x}} \n{ v_0 v }{L^4_{t} L^1_x} \lesssim \n{u}{L^4_{t,x}} \n{ v_0}{L^2_x} \n{ v }{L^4_{t} L^2_x}\lesssim \n{u}{X^{0,\f{1}{3}}} \n{ v_0}{L^2_x} \n{ v }{H^{\f{1}{4}}_{t} L^2_x}
\end{align*}

For the remaining two cases, we can again control the LHS of \eqref{eq:est7} by $\n{(T(u,v_0))_x v }{X^{1, -\f{1}{3}}}$ and apply duality.  The mechanism of proof for both Case 2 and 3 are analogous, so we will only show the proof of Case 3 below.

\textbf{Case 3:} $\lan{\tau_3}\gtrsim H_3$.\\

Applying \eqref{eq:triest1} and duality,
\begin{align*}
\n{(T(u,v_0))_x v }{X^{1, -\f{1}{3}}}&\lesssim \sup_{\n{\vp}{X^{0,\f{1}{3}}} = 1} \left| \int_{\mathbb{T}\times \mathbb{R}} (J_x^{-1}u) J_x^{-\f{1}{2}} (v_0 J_t^{\f{1}{2}}v) \vp\, dx\,dt \right|\\
&\lesssim \sup_{\n{\vp}{X^{0,\f{1}{3}}} = 1}  \n{J_x^{-1}u}{L^4_t L^\infty_x} \n{J_x^{-\f{1}{2}} (v_0 J_t^{\f{1}{2}}v)}{L^2_t L^2_{x}} \n{\vp}{L^4_{t} L^2_x}\\
&\lesssim \n{u}{L^4_{t,x}} \n{ v_0}{L^2_x}  \n{v}{H^{\f{1}{2}}_t L^2_{x}} 
\end{align*}
	
This proves \eqref{eq:est7}.\\

Proof of \eqref{eq:new} is very similar and even a bit simpler, and much of the detail can be omitted.  Below, we will only note the similarities between these estimates, from with the proof of \eqref{eq:new} will directly follow.\\

First, to consider the modulational weight,  the same algebraic identity as \eqref{eq:alg2} applies, except with $\tau_3$ replaced by $\tau_2$.   Hence, the trilinear modulation estimate is given by
$$
\max\{|\tau - k^3|,|\tau_1 -k_1|, |\tau_2|\}\gtrsim |k_2 + k_3| \max\{k_1^2, k^2\} = H_3 
$$
While we still have to deal with the resonance problem where $k_2 + k_3= 0$, the situation is very similar to before. The Fourier coefficient is written as:
\begin{equation}\label{eq:tri2}
\cf_x [ J_x\left[ T(\p_x(uv),v_0) v\right]] (k) =  \sum_{\tiny \begin{array}{c} k_1 + k_2 + k_3 = k\\ k_2, k_3 \neq 0\end{array}} \f{ (k_1 + k_2)\lan{k}}{(k_1+k_2)^2 + k^2 + (k_1+k_2) k } u_{k_1} v_{k_2} (v_0)_{k_3}.
\end{equation}
For the resonant case, the expression reduces to 
$$
 \sum_{ k_2 \neq 0} \f{ (k + k_2)\lan{k}}{(k+k_2)^2 + k^2 + k (k + k_2)} u_{k} v_{k_2} (v_0)_{-k_2}.
$$
Note that in both the general case \eqref{eq:tri2} and the resonant case, the fraction within the summand is uniformly bounded.  The estimate in the resonant case follows exactly the same calculations as before. For the non-resonant case, the use of $H_3$ weight yields the same estimate as \eqref{eq:triest1}, hence the remaining calculations are identical.
\end{proof}

\subsection{Proof of Theorem \ref{prop:109}}
Since we already know that $\n{v(t)}{L^2_x}$ is bounded  by $\n{u_0}{L^2}+ \n{v_0}{L^2}$ for all $t\geq 0$, we focus on obtaining the bound for $\n{u(t)}{H^1_x}$.   First, using $u = T(u,v_0) + w$ and the remarks following Lemma~\ref{le:t}, we can write 
\begin{align*}
\n{u}{C^0_t H^1_x} &\lesssim \n{T(u,v_0)}{H^1_x L^1_\tau} + \n{w}{H^1_x L^1_\tau} \lesssim \n{u}{L^2_x L^1_\tau} \n{v_0}{L^2_x}  +\n{w}{H^1_x L^1_\tau}\\
\n{u}{X^{\f{1}{2},\f{1}{2}}}&\lesssim \n{u}{X^{0,\f{1}{2}}} \n{v_0}{L^2_x} + \n{w}{X^{1,\f{1}{2}}} 
\end{align*}
Using \eqref{eq:ww} as well as Lemma~\ref{le:7} and \ref{le:8}, we have 
\begin{align*}
\n{w}{H^1_x L^1_\tau\cap X^{1,\f{1}{2}}} &\lesssim \n{u_0}{H^1} \left(1 + \n{v_0}{L^2}\right)   + T^{0+} \left(\n{u}{X^{0,\f{1}{2}}} \n{v_0}{L^2_x}\n{v}{H^{\f{1}{2}}_t L^2_x}  + \n{w}{X^{1,\f{1}{2}}} \n{v}{H_t^{\f{1}{2}} L^2_x} \right. \\
&+ \left.\n{u}{X^{0,\f{1}{2}}}\n{z}{H^{\f{1}{2}}_t H^1_x} + \n{u}{X^{0, \f{1}{2}}} \n{v}{H^{\f{1}{2}}_t L^2_x} \n{v_0}{L^2_x}\right)
\end{align*}
where the positive power $T$ is gained from including a $\eta(t/T)$ factor on the LHS.  Also, Lemma~\ref{le:z} gives
$$
\n{z}{H^{\f{1}{2}}_t H^1_x} \lesssim  T^{0+}\n{u}{X^{\f{1}{2},\f{1}{2}}}^2
$$

Applying \eqref{eq:useful} with $s=0$, we can bound $\n{u}{X^{0,\f{1}{2}}\cap L^2_x L^1_\tau} + \n{v}{H^{\f{1}{2}}_t L^2_x \cap L^2_x L^1_\tau}$ by $\n{u_0}{L^2} +\n{v_0}{L^2}$ where the choice of  $T$ depends only on $\n{u_0}{L^2} +\n{v_0}{L^2}$.  Hence,
\begin{align*}
\n{w}{H^1_x L^1_\tau\cap X^{1,\f{1}{2}}} &\lesssim \n{u_0}{H^1} \left(1 + \n{v_0}{L^2}\right)   + T^{0+} \left(\n{u_0}{L^2} \n{v_0}{L^2_x}^2 + \n{u}{X^{\f{1}{2},\f{1}{2}}}^3 \right)
\end{align*} 
Substituting this to estimate $u$, we obtain
$$
\n{u}{C^0_t H^1_x\cap X^{\f{1}{2},\f{1}{2}}} \lesssim \left(\n{u_0}{L^2} \n{v_0}{L^2}  +  \n{u_0}{H^1}\right) \left(1 + \n{v_0}{L^2}\right)  + T^{0+} \n{u}{X^{\f{1}{2}, \f{1}{2}}}^3
$$
Finally, we apply \eqref{eq:useful} with $s= \f{1}{2}$, where the choice of $T$ depends on $\n{u_0}{H^{\f{1}{2}}} +\n{v_0}{L^2}$.  We can use this estimate to bound $\n{u}{X^{\f{1}{2}, \f{1}{2}}}^2$ and choose $T$ small, based on $\n{u_0}{H^{\f{1}{2}}}$ to move this term to the LHS of the inequality.  This achieves the bound:
\begin{equation}\label{eq:h1bound}
\n{u(t)}{C^{0}_t H^1_x} \lesssim C\left( \n{u_0}{H^1}, \n{v_0}{L^2}\right) \qquad \text{ for } 0\leq t\leq T.
\end{equation}
Note that the size of $T$ depends on $\n{u_0}{H^{\f{1}{2}}}$, and bound on the RHS depends on $\n{u_0}{H^1}$.  In order to extend this result for all $t\geq 0$, we need to control $\n{u_0}{H^{1}}$.  The argument for the a priori bound on $\n{u(t)}{H^1_x}$ is outlined as follows: \\

 Due to the conservation law 
   \begin{eqnarray*}
   & & 	   \int  u_x^2(t) – u(t) v(t)^2\, dx =  \int  (u_0')^2 – u_0 v_0^2\, dx=:M_0 \\
   	& & \int u^2+v^2 = \int u_0^2+v_0^2=:M_1.
   \end{eqnarray*}
 As a result, 
   \begin{eqnarray*}
   	  \|u_x(t, \cdot)\|^2 &\leq &  M_0+ \|u(t, \cdot)\|_{L^\infty} \int v^2(t) dx= \\
   	  &=& M_0+ \|u(t, \cdot)\|_{L^\infty} \int v_0^2 dx\leq M_0+ 2  \|v_0\|^2 \|u_x(t, \cdot)\|\|u(t)\|=
   	  M_0+ 2  M_1^{\f{3}{2}}  \|u_x(t, \cdot)\|. 
   \end{eqnarray*}
Clearly, one can then hide $\|u_x(t, \cdot)\|$ on the right-hand side behind $\|u_x(t, \cdot)\|^2$, to obtain 
 $$
 \|u_x(t, \cdot)\|^2\leq 10 (M_0+8 M_1^3 )=:M_2^2
 $$
This tells us that $\n{u(t)}{H^1_x}$ is bounded for all $t\geq 0$.  By iterating this scheme for fixed-size time intervals, we can extend \eqref{eq:h1bound} for all $t\geq 0$.  This proves Theorem~\ref{prop:109}.


 \section{Spectral stability - preparations} 
 \label{sec:5}
 We begin with a discussion of the  properties of the operator $\cl_+$.
 \subsection{Spectral properties of $\cl_+$}

 \begin{proposition}
 	\label{p:10} 
 	The operator $\cl_+$ has one dimensional kernel, and moreover $Ker(\cl_+)=span[\psi']$. 
 \end{proposition}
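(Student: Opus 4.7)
The plan is to first verify $\cl_+\psi' = 0$ by direct differentiation, and then reduce the one-dimensionality of the kernel to a classical Sturm--Liouville interlacing argument on the half-period, exploiting the even symmetry of $\psi$.

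Differentiating the profile equation \eqref{2.5} in $\xi$ gives
\[
\psi''' + \frac{3\psi^2}{2c}\psi' - c\psi' = 0,
\]
which is precisely $\cl_+\psi'=0$, so $\psi'\in\ker(\cl_+)$. Since $\cl_+$ is a regular second-order Sturm--Liouville operator, $\dim\ker(\cl_+)\le 2$, so the task reduces to showing that no linearly independent periodic null vector exists.

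By translation invariance I would normalize so that $\psi$ attains its maximum $\eta_4$ at $\xi=0$ and its minimum $\eta_3$ at $\xi=L/2$. Then $\psi$ is even, $\psi'$ is odd, and $\psi'$ vanishes exactly at $\xi=0$ and $\xi=L/2$ with a single sign on $(0,L/2)$ by the monotonicity evident from the phase portrait of Section \ref{sec:2}; the zeros are simple since, by \eqref{2.5} and \eqref{2.20}, $\psi''(0) = \eta_4(4c^2-3\eta_4^2)/(8c)\ne 0$ (and likewise at $L/2$, using $\eta_4\in(2c/\sqrt{3},2c)$ from \eqref{2.19}). Because the potential $3\psi^2/(2c)$ is even, $\cl_+$ commutes with the reflection $\xi\mapsto -\xi$, and $L^2_{per}(0,L)$ splits into $\cl_+$-invariant even and odd subspaces. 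This splitting amounts to the Neumann/Dirichlet decomposition on the half-interval $[0,L/2]$: an even $L$-periodic eigenfunction restricts to a Neumann eigenfunction, while an odd one restricts to a Dirichlet eigenfunction.

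With this setup, $\psi'\big|_{[0,L/2]}$ vanishes exactly at the endpoints and has one sign in the interior, so it is the Dirichlet ground state of $\cl_+$ on $[0,L/2]$; hence the smallest Dirichlet eigenvalue is $\mu_0^D=0$, simple. The classical strict interlacing of Neumann and Dirichlet eigenvalues,
\[
\mu_0^N < \mu_0^D < \mu_1^N < \mu_1^D < \cdots,
\]
then yields $\mu_0^N<0<\mu_1^N$, so $0$ is not a Neumann eigenvalue of $\cl_+$ on $[0,L/2]$. Consequently no even $L$-periodic element lies in $\ker(\cl_+)$, and therefore $\ker(\cl_+)=\mathrm{span}\{\psi'\}$. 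The only non-routine step is confirming the monotonicity and simple-zero structure of $\psi'$ together with the parity, all of which follow directly from the explicit description of the orbit in Section \ref{sec:2}; the Neumann--Dirichlet interlacing itself is standard.
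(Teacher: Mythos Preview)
Your reduction to the half-period and the identification of $\psi'$ as the Dirichlet ground state on $[0,L/2]$ are fine. The gap is the interlacing step: the chain
\[
\mu_0^{N} < \mu_0^{D} < \mu_1^{N} < \mu_1^{D} < \cdots
\]
is \emph{not} a general fact when both endpoints are switched from Dirichlet to Neumann. Strict interlacing is classical only when the boundary condition is changed at a \emph{single} endpoint; changing both gives merely $\mu_k^{N}\le \mu_k^{D}$. The free Laplacian on $[0,\pi]$ already shows the failure: the Neumann spectrum is $0,1,4,9,\ldots$ and the Dirichlet spectrum is $1,4,9,\ldots$, so $\mu_0^{D}=\mu_1^{N}=1$. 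Translated back to the periodic problem on $[0,2\pi]$, this is precisely the double periodic eigenvalue $\lambda_1=\lambda_2=1$. Thus from $\mu_0^{D}=0$ you can only conclude $\mu_0^{N}\le 0$, not $\mu_1^{N}>0$; the possibility $\mu_1^{N}=0$ --- i.e., an even $L$-periodic solution of $\cl_+ u=0$ coexisting with $\psi'$ --- is exactly what must be excluded, and your argument does not exclude it.

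For comparison, the paper does not attempt an abstract interlacing argument. It writes down the second (even) solution $\varphi$ of $\cl_+ y=0$ explicitly via the reduction-of-order formula, regularized so that the integrand is nonsingular, and then checks that $\varphi$ fails to be $L$-periodic by showing $\varphi'(0)\neq\varphi'(L)$. This boils down to the nonvanishing of the explicit integral $A_2(\kappa)$ in \eqref{l:955}, which is verified numerically across $\kappa\in(0,1)$. So the paper's proof is computational rather than structural; your approach would be cleaner if the interlacing held, but as stated it needs an additional argument (specific to this potential) to rule out $\mu_1^{N}=0$.
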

As we need to develop further tools, we  postpone the proof of Proposition \ref{p:10} for the Appendix. 
\begin{proposition}
	\label{p:9} 
	For all values of the parameters, as in Proposition \ref{prop:10}, the operator $\cl_+$ has exactly two negative e-values.
\end{proposition}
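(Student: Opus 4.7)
The plan is to combine Sturm--Liouville oscillation theory (for the upper bound) with a symmetry-based reduction and a two-dimensional test subspace (for the lower bound).

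First, I would invoke the classical Hill operator theory. Since $\cl_+ = -\p_\xi^2 + c - \tfrac{3\psi^2}{2c}$ is a self-adjoint Schr\"odinger operator on $L^2_{per}(0,L)$ with smooth $L$-periodic potential, its spectrum is purely discrete and can be listed as $\lambda_0 < \lambda_1 \leq \lambda_2 < \lambda_3 \leq \lambda_4 < \cdots$, where the $L$-periodic eigenfunction for $\lambda_0$ has no zeros and those for $\lambda_{2n-1},\lambda_{2n}$ have exactly $2n$ zeros in $[0,L)$. By Proposition~\ref{p:10}, $\psi'\in\mathrm{Ker}(\cl_+)$ and this kernel is one-dimensional; moreover $\psi'$ has exactly two zeros in $[0,L)$ (at the maximum and minimum of $\psi$). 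Hence $\psi'$ is an eigenfunction associated with $\lambda_1$ or $\lambda_2$, giving the upper bound $n(\cl_+)\leq 2$.

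Next, to obtain the matching lower bound, I would translate $\xi$ so that the maximum of $\psi$ lies at $\xi=0$, making $\psi$ even and the potential $c-\tfrac{3\psi^2}{2c}$ symmetric under $\xi\mapsto -\xi$. Then $\cl_+$ commutes with the reflection operator and its spectrum decomposes as a disjoint union of even and odd eigenvalues. An odd function on $[-L/2,L/2]$ satisfying periodic boundary conditions must vanish at both $0$ and $\pm L/2$, so the odd spectrum of $\cl_+$ coincides with the Dirichlet spectrum of $\cl_+$ on $[0,L/2]$. Since $\psi$ is strictly monotone on $[0,L/2]$, $\psi'$ has no interior zeros on $(0,L/2)$ and vanishes exactly at the endpoints; by the Sturm oscillation theorem for the Dirichlet problem, $\psi'$ is the first Dirichlet eigenfunction on $[0,L/2]$, with corresponding eigenvalue $0$. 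Consequently every odd eigenvalue of $\cl_+$ is $\geq 0$, and thus $n(\cl_+) = n_{\text{even}}(\cl_+)$.

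The ground state of $\cl_+$ is even (as the unique positive eigenfunction of a Schr\"odinger operator with even potential) with strictly negative eigenvalue $\lambda_0$, so $n_{\text{even}}(\cl_+)\geq 1$. To raise this to $2$, I would exhibit a two-dimensional subspace of even functions on which the quadratic form $\langle \cl_+ \cdot,\cdot\rangle$ is strictly negative definite; by min--max this forces $\lambda_1^{\text{even}}<0$. Natural test subspaces to try are $V=\mathrm{span}\{1,\psi\}$ or $V=\mathrm{span}\{1,\p_c\psi\}$. Using the profile equation \eqref{2.5} and the integrated identities
\begin{align*}
\int_0^L \psi^3\,dx &= 2cF_1L + 2c^2\int_0^L\psi\,dx,\\
\int_0^L \psi^4\,dx &= 2c\int_0^L(\psi')^2\,dx + 2c^2\int_0^L\psi^2\,dx + 2cF_1\int_0^L\psi\,dx,
\end{align*}
together with the parametrizations \eqref{2.16}--\eqref{2.a10}, the $2\times 2$ Gram matrix $G=(\langle\cl_+ f_i,f_j\rangle)$ can be expressed as an explicit combination of complete elliptic integrals in $\kappa\in(0,1)$. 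I would then verify via Mathematica that $\mathrm{tr}(G)<0$ and $\det(G)>0$ throughout the parameter range, exactly in the spirit of the $\cd$-matrix computation of Section~\ref{sec:6}.

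The hard part is this final verification. A preliminary check at the bifurcation endpoint $\kappa\to 0^+$ (where $\psi\to 2c/\sqrt{3}$ is constant and $\cl_+\to -\p^2 - c$ has a two-dimensional kernel spanned by $\cos(2\pi\xi/L),\sin(2\pi\xi/L)$) already shows that $\mathrm{span}\{1,\psi\}$ becomes degenerate, so the test subspace must be chosen with care and the negative-definiteness verified uniformly for $\kappa$ bounded away from $0$. Should $\mathrm{span}\{1,\psi\}$ prove insufficient, my fallback would be to replace $\psi$ by the curve derivative $\p_c\psi$, which remains linearly independent from the constant function at $\kappa=0$, and use the Fredholm identity $\cl_+(\psi - 2c\,\p_c\psi)=2c\psi+(2cF_{1,c}-F_1)$ to reduce all computations to moments of $\psi$, which can again be evaluated in closed form using the parametrization of Proposition~\ref{prop:10}.
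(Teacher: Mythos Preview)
Your strategy is sound and the structural observations are correct: the Sturm oscillation upper bound $n(\cl_+)\leq 2$ is exactly what the paper uses, and your reduction via the even/odd splitting --- identifying the odd periodic spectrum with the Dirichlet spectrum on $[0,L/2]$ and recognizing $\psi'$ as its ground state --- is a clean and correct way to conclude that all odd eigenvalues are nonnegative, hence $n(\cl_+)=n_{\text{even}}(\cl_+)$. The remaining step, producing a two-dimensional even subspace on which $\langle\cl_+\cdot,\cdot\rangle<0$, is a legitimate variational route to the lower bound, and you are right that it ultimately requires a computer check in the same spirit as the paper's determinant calculation in Section~\ref{sec:6}.

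The paper takes a genuinely different route for the lower bound. Rather than a variational test, it appeals to Neves' criterion: one constructs the second (non-periodic) solution $q$ of $\cl_+ q=0$ with unit Wronskian against $p=\psi'$, writes $q(\xi+L)=q(\xi)+\theta\, p(\xi)$, and reads off whether $0=\lambda_1$ or $0=\lambda_2$ from the sign of $\theta$. This is combined with Neves' isoinertiality theorem (since $0$ is an eigenvalue for every $c$ and the potential is smooth in $c$, the Morse index is constant in $c$), so a numerical ODE integration at \emph{a single} parameter value already settles the question for the whole family. Your approach, as written, would need the Gram-matrix sign verified for all $\kappa\in(0,1)$, and you have correctly identified the delicate endpoint $\kappa\to 0^+$: there $\lambda_1\to 0^-$, so any fixed test subspace will see the negative form degenerate. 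You could shortcut this by invoking the same isoinertiality argument the paper uses --- once Proposition~\ref{p:10} gives a simple zero eigenvalue for every $c$, continuity of eigenvalues forces $n(\cl_+)$ to be constant, so checking your $2\times 2$ Gram matrix at one convenient value of $\kappa$ (say $\kappa=1/2$) would suffice. That would make your approach fully comparable to the paper's, trading Neves' $\theta$-criterion for an elementary Rayleigh-quotient test.
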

For the somewhat technical proof of Proposition \ref{p:9}, we refer to the Appendix. 
\subsection{Spectral properties of $\ch$}
The first result is a direct consequence of Proposition \ref{p:10}. 
 \begin{proposition}
 	\label{prop5}
 	Let $\psi$ be a wave satisfying \eqref{2.5}. Then $dim(\ker(\ch))=1$ and in fact, 
 	$$
 	\ker(\ch)=span\begin{pmatrix} \psi' \\ \phi' \end{pmatrix}.
 	$$
 \end{proposition}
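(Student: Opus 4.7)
The plan is to reduce the kernel computation for the matrix operator $\ch$ to the scalar kernel computation for $\cl_+$, which has already been established in Proposition \ref{p:10}. First I would verify membership: since $\phi = \psi^2/(2c)$, we have $\phi' = \psi\psi'/c$, and differentiating \eqref{2.5} gives $\psi''' + \tfrac{3\psi^2}{2c}\psi' - c\psi' = 0$, which is precisely $\cl_+\psi' = 0$. A direct substitution then shows
\begin{equation*}
\cl_-\psi' - \psi\phi' = -\psi''' + c\psi' - \tfrac{\psi^2}{2c}\psi' - \tfrac{\psi^2}{c}\psi' = \cl_+\psi' = 0, \qquad -\psi\psi' + c\phi' = 0,
\end{equation*}
so $(\psi',\phi')^\top \in \ker(\ch)$.

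For the reverse inclusion, suppose $\ch\begin{pmatrix} U \\ V \end{pmatrix} = 0$, which reads
\begin{equation*}
\cl_- U - \psi V = 0, \qquad -\psi U + cV = 0.
\end{equation*}
Since $c>0$, the second equation yields $V = \psi U / c$. Substituting into the first equation gives
\begin{equation*}
\cl_- U - \tfrac{\psi^2}{c} U = -U'' + cU - \tfrac{\psi^2}{2c} U - \tfrac{\psi^2}{c} U = -U'' + cU - \tfrac{3\psi^2}{2c} U = \cl_+ U = 0.
\end{equation*}
By Proposition \ref{p:10}, $U = \alpha\psi'$ for some constant $\alpha \in \mathbb{R}$, and hence $V = \alpha \psi\psi'/c = \alpha \phi'$. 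Thus $(U,V)^\top = \alpha(\psi',\phi')^\top$, which together with the previous step proves $\ker(\ch) = \operatorname{span}\{(\psi',\phi')^\top\}$.

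There is no serious obstacle here; the only subtlety is the clean algebraic miracle by which $\cl_- - \psi^2/c$ coincides with $\cl_+$, which in turn is exactly what makes the reduction to Proposition \ref{p:10} work. The hypothesis $c>0$ from Proposition \ref{prop:10} is used crucially to invert the second row of $\ch$ and isolate $V$ in terms of $U$.
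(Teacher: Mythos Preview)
Your proof is correct and follows essentially the same approach as the paper: solve the second row of $\ch$ for $V=\psi U/c$, substitute into the first row to obtain $\cl_+ U=0$, and invoke Proposition~\ref{p:10}. The only difference is that you include an explicit verification that $(\psi',\phi')^\top\in\ker(\ch)$, which the paper leaves implicit.
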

 \begin{proof}
 Let   $ \left( \begin{array}{cc}f \\ g \end{array}\right)\in \ker \mathcal{H}$, i.e. 
 	\begin{eqnarray*}
 	\cl_- f - \psi g = 0 \\
 		-\psi f + c g=0
 	\end{eqnarray*}
 	From the second equation, we have $g = \frac{1}{c}\psi f$ and plugging in the second equation, we get
 	\begin{equation}
 		\label{hill1}
 		\left( \cl_- -\frac{1}{c}\psi^2\right)f=\cl_+ f = 0.
 	\end{equation}
 Thus, by Proposition \ref{p:10}, we know $f=c_0 \psi'$ and $g=c_0 \f{\psi \psi'}{c}$, whence 
 $$
 Ker[\ch]=span\begin{pmatrix} \psi' \\ \frac{1}{c}\psi\psi' \end{pmatrix}=
 span\begin{pmatrix} \psi' \\ \phi' \end{pmatrix}
 $$
 \end{proof}
Our second result establishes the Morse index of $\ch$, namely the number of negative eigenvalues of $\ch$, $n(\ch)=\# \{\la\in \si(\ch): \la<0\}$.
\begin{proposition}
	\label{p:11} 
	For all values of the parameters, as in Proposition \ref{prop:10}, the operator $\ch$ has exactly two negative e-values. That is,  $n(\ch)=2$. 
\end{proposition}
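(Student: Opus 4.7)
The plan is to reduce the Morse index count for the matrix operator $\ch$ to the already established Morse index count for the scalar operator $\cl_+$, via a Schur complement / completing-the-square argument. This is natural here because the $(2,2)$-entry of $\ch$ is the positive scalar $c>0$, which is trivially invertible and positive, and the Schur complement with respect to this entry is precisely
\[
\cl_- - \psi \cdot c^{-1} \cdot \psi = \cl_- - \frac{\psi^2}{c} = \cl_+.
\]

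Concretely, I would introduce the change of variables $h := g - \frac{\psi f}{c}$ on the domain $H^1(0,L)\times L^2(0,L)$ of $\ch$. This is a bounded, boundedly invertible linear bijection of $H^1\times L^2$ onto itself. A direct completion of the square then yields, for every $\vec{w}=(f,g)\in H^1\times L^2$,
\[
\langle \ch \vec{w},\vec{w}\rangle \;=\; \langle \cl_- f,f\rangle - 2\langle \psi f, g\rangle + c\|g\|^2 \;=\; \langle \cl_+ f,f\rangle \;+\; c\,\Bigl\|\,g-\tfrac{\psi f}{c}\,\Bigr\|^2.
\]
Thus, in the new coordinates $(f,h)$, the quadratic form associated to $\ch$ is block-diagonal, with block $\cl_+$ on the first factor and the strictly positive block $c\cdot\mathrm{Id}_{L^2}$ on the second. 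By Sylvester's law of inertia (applied to the congruence realized by this bijection) the negative, zero, and positive inertia of $\ch$ equal those of the direct sum $\cl_+ \oplus (c\,\mathrm{Id})$. Since $c>0$ contributes $0$ to the negative count, we conclude
\[
n(\ch) \;=\; n(\cl_+) + n(c\,\mathrm{Id}) \;=\; 2 + 0 \;=\; 2,
\]
using Proposition \ref{p:9}. As a side benefit, the same decomposition re-derives $\dim\ker(\ch) = \dim\ker(\cl_+) = 1$ (Proposition \ref{prop5}), since the kernel corresponds precisely to $h=0$ and $f\in\ker\cl_+$.

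The only step requiring mild care is ensuring that the change of variables $g\mapsto h=g-\psi f/c$ is a genuine isomorphism on the domain in the operator-theoretic sense (not merely on the form domain). Since $\psi\in C^\infty$ is bounded and $c>0$ is constant, multiplication by $\psi/c$ maps $H^1\to H^1\subset L^2$ continuously, so this shear preserves the form domain $H^1\times L^2$ of $\ch$ and is a Hilbert-space isomorphism. No further subtlety enters, so the Schur complement reduction is clean and the identity $n(\ch)=n(\cl_+)=2$ follows.
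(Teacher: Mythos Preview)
Your argument is correct and even derives the same completion-of-squares identity the paper uses (their display \eqref{m1}), but you extract more from it. The paper employs this identity only for the upper bound $n(\ch)\leq 2$ via min-max, and then runs a separate argument for the lower bound: it reduces the eigenvalue problem $\ch\vec{w}=-\mu\vec{w}$ to the scalar problem $\cl_\mu f=0$ with $\cl_\mu=\cl_++\mu+\tfrac{\mu}{c(c+\mu)}\psi^2$, and tracks the two lowest eigenvalues $\la_0(\mu),\la_1(\mu)$ of this monotone family as $\mu$ increases from $0$ to $\si_0^2$, locating two zero-crossings by continuity. Your route, invoking Sylvester's law of inertia under the bounded congruence $(f,g)\mapsto(f,g-\psi f/c)$ on $L^2\times L^2$, yields $n(\ch)=n(\cl_+)$ in one stroke and is strictly shorter. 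The paper's detour has the minor side benefit of localizing the two negative eigenvalues of $\ch$ inside $(-\si_0^2,0)$, but that information is not used anywhere else in the paper, so your Schur-complement reduction is the cleaner proof here.
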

\begin{proof}
	The starting point is Proposition \ref{p:9}. Specifically,  we show first that since $n(\cl_+)=2$, then $n(\ch)\leq 2$. Denote the lowest two eigenvalues of $\cl_+$ and their corresponding eigenfunctions by $-\si_j^2, \chi_j, j=1,2$. That is $-\si_0^2<-\si_1^2<0$ and 
	$$
	\cl_+ \chi_j=-\si_j^2 \chi_j, j=0,1
	$$
	Clearly, for all $f\perp \chi_j, j-1,2$, we have $\dpr{\cl_+ f}{f}\geq 0$. 
	
	Consider then the quadratic form associated
	to the $\mathcal{H}$ 
	\begin{equation}
		\label{m1}
		\begin{array}{ll}
			\langle \mathcal{H} \left( \begin{array}{cc} f \\ g \end{array} \right), \left( \begin{array}{cc} f \\ g \end{array} \right) \rangle &=\langle \cl_- f,f \rangle+c\langle g,g\rangle-2\langle \psi f, g\rangle \\
			\\
			&=\langle \mathcal{L}_+ f, f\rangle+\int_{-\frac{L}{2}}^{\frac{L}{2}}{\left( \sqrt{c}g-\frac{1}{\sqrt{c}}\psi f\right)^2}dx\geq \langle \mathcal{L}_+ f, f\rangle
		\end{array}
	\end{equation}
Thus, consider  $z=\begin{pmatrix}
	z_1 \\ z_2
\end{pmatrix}\in span[\begin{pmatrix}
	\chi_0 \\ 0
\end{pmatrix}, \begin{pmatrix}
	\chi_1 \\ 0
\end{pmatrix}]^\perp$, i.e. $\dpr{z_1}{\chi_j}=0, j=0,1$. We deduce that 
$$
\dpr{\ch z}{z}\geq \dpr{\cl_+ z_1}{z_1}\geq 0
$$
It follows by the min-max principles that $n(\ch)\leq 2$. 

It now remains to show that conversely, $\ch$ has at least two negative eigenvalues. To this end, let us consider the e-value problem associated with $\ch$. For $\mu>0$, and let $-\mu\in \si(\ch)$, i.e. 
	\begin{eqnarray*}
	\cl_- f - \psi g = -\mu f \\
	-\psi f + c g=-\mu g
\end{eqnarray*}
Solving for $g$, we obtain the following scalar equation for $f$, 
\begin{equation}
	\label{p:30} 
\cl_\mu f:=	\cl_+ f+\mu f + \f{\mu}{c(c+\mu)} \psi^2 f=0.
\end{equation}
This, we have that $-\mu \in \si(\ch)$ if and only if the problem \eqref{p:30} has a solution $f\neq 0$, or equivalently, the Hill's operator 
$$
\cl_\mu:= \cl_+  +\mu  + \f{\mu}{c(c+\mu)} \psi^2 
$$
has eigenvalue zero. Note that the function $\mu\to L_\mu$ is increasing, i.e. if $\mu_1>\mu_2$, then $\cl_{\mu_1}>\cl_{\mu_2}$ in operator sense. For any $\mu\geq 0$, denote the lowest two eigenvalues of $\cl_\mu$ as follows
$$
\la_0(\mu):=\la_0(\cl_\mu)<\la_1(\mu):=\la_1(\cl_\mu).
$$
Both of these functions are continuous $\la_j: [0, +\infty)\to \rone, j=0,1$. Also, note $\la_0(0)=\la_0(\cl_0)=-\si_0^2$, while for $\la=\si_0^2$, we have 
$$
\cl_\mu> \cl_+ +\si_0^2 \geq 0,
$$
whence $\la_0(\si_0^2)>0$. Thus, we have a change of sign in $(0, \si_0^2)$ for the continuous function $\la_0(\cdot)$, whence there is $\mu_0\in (0, \si_0^2)$, so that $\la_0(\mu_0)=0$. It follows that $-\mu_0\in \si(\ch)$, a negative e-value. Furthermore, consider $\la_1:[0, \mu_0]\to \rone$. Clearly, 
$$
\la_1(0)=\la_1(\cl_+)=-\si_1^2<0, \la_1(\mu_0)>\la_0(\mu_0)=0,
$$
whence change of sign occurs for $\la_1$ in the interval $[0, \mu_0]$. Thus, there is $\mu_1\in (0, \mu_0)$, so that $\la_1(\mu_1)=0$, whence $-\mu_1\in \si(\ch)$ as well. Note that $-\mu_0<-\mu_1<0$, and so we have produced two negative e-values for $\ch$. The proof is complete.

\end{proof}

Our next result identifies by a direct inspection,  the generalized kernel of the operator $\p_x \ch$. To this end, observe that by Proposition \ref{prop5}, we clearly have\footnote{Here $\ch^{-1} \begin{pmatrix} 1\\ 0  \end{pmatrix},  \ch^{-1} \begin{pmatrix} 0\\ 1  \end{pmatrix}$  are uniquely defined in the co-dimension one subspace $Ker(\ch)^\perp=\begin{pmatrix} \psi'\\ \phi'  \end{pmatrix}^\perp$}
$$
Ker(\p_x \ch)=  span[\ch^{-1} \begin{pmatrix} 1\\ 0  \end{pmatrix},  \ch^{-1} \begin{pmatrix} 0\\ 1  \end{pmatrix}, \begin{pmatrix} \psi'\\ \phi'  \end{pmatrix}].
$$
\begin{proposition}
	\label{prop:21}
	The vectors 
	$$
e_1:= \begin{pmatrix} \cl_+^{-1} [1] \\ c^{-1} \psi  \cl_+^{-1} [1] \end{pmatrix}, \ \ \ e_2= \begin{pmatrix}  c^{-1} \cl_+^{-1} \psi \\ c^{-1}+ c^{-2} \psi \cl_+^{-1} \psi \end{pmatrix}, e_3= 
\begin{pmatrix}   
	\f{1}{2c^2} \cl_+^{-1}\psi^3+   \cl_+^{-1}\psi \\ 
 \f{1}{2c^3} \psi \cl_+^{-1}\psi^3+\f{1}{c} \psi \cl_+^{-1}\psi+\f{\psi}{2c^2}\end{pmatrix}, 
	$$
are solutions of 
	$$
	\ch e_1 = \begin{pmatrix} 1\\ 0  \end{pmatrix},  \ch e_2 = \begin{pmatrix} 0\\ 1  \end{pmatrix},  \ch e_3 = \begin{pmatrix} \psi\\ \phi  \end{pmatrix}
	$$
	Furthermore, 
	\begin{eqnarray}
		\label{l:1}
		\dpr{\ch e_1}{e_1} &=& \dpr{\cl_+^{-1} 1}{1} \\
		\label{l:2}
		\dpr{\ch e_1}{e_2} &=& c^{-1} \dpr{\cl_+^{-1} 1}{\psi} \\
		\label{l:3}
			\dpr{\ch e_1}{e_3} &=&     \dpr{\cl_+^{-1} 1}{\psi}  +\f{1}{2c^2} \dpr{\cl_+^{-1}\psi^3}{1} \\
			\label{l:4}
		\dpr{\ch e_2}{e_2} &=& L c^{-1} + c^{-2} \dpr{\cl_+^{-1} \psi}{\psi}, \\
		\label{l:5}
			\dpr{\ch e_2}{e_3} &=&   \f{1}{2c^3} \dpr{ \cl_+^{-1}\psi^3}{\psi} +\f{1}{c}\dpr{ \cl_+^{-1}\psi}{\psi} +\f{\int \psi}{2c^2} \\
			\label{l:6}
			\dpr{\ch e_3}{e_3} &=&  \f{1}{c^2}\dpr{\cl_+^{-1}\psi^3}{\psi}+\dpr{ \cl_+^{-1}\psi}{\psi}+\f{1}{4 c^4}\dpr{\cl_+^{-1}\psi^3}{\psi^3}+\f{\int\psi^3}{4c^3}.
	\end{eqnarray}
\end{proposition}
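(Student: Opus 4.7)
The proof is a direct algebraic verification based on a single key identity. Observe that
$$
\cl_- - \f{\psi^2}{c} \;=\; -\p_{\xi\xi} + c - \f{\psi^2}{2c} - \f{\psi^2}{c} \;=\; -\p_{\xi\xi} + c - \f{3\psi^2}{2c} \;=\; \cl_+,
$$
together with the relation $\phi = \psi^2/(2c)$ from \eqref{2.4}. Everything in the proposition will follow from these two facts and the self-adjointness of $\cl_+$.

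The first step is to make sense of the preimages. By Proposition \ref{p:10}, $\mathrm{Ker}(\cl_+) = \mathrm{span}[\psi']$, so by self-adjointness the range of $\cl_+$ is $\psi'^\perp$. Since $\int \psi' \cdot 1 = \int \psi'\psi = \int \psi'\psi^3 = 0$ by periodicity, each of $\cl_+^{-1}[1]$, $\cl_+^{-1}\psi$, $\cl_+^{-1}\psi^3$ is well-defined modulo a multiple of $\psi'$. The ambiguity modulo $\psi'$ is invisible in the final inner products because they pair $e_j$ against $(1,0)^T$, $(0,1)^T$, or $(\psi,\phi)^T$, all of which lie in $\mathrm{Ker}(\ch)^\perp = \mathrm{span}[(\psi',\phi')^T]^\perp$ by Proposition \ref{prop5}.

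Next, I would verify the three identities $\ch e_j = \cdots$ by direct substitution in
$$
\ch \begin{pmatrix} f \\ g \end{pmatrix} = \begin{pmatrix} \cl_- f - \psi g \\ -\psi f + c g \end{pmatrix}.
$$
For $e_1$: the second component is $-\psi\,\cl_+^{-1}[1] + c\cdot c^{-1}\psi\,\cl_+^{-1}[1] = 0$, while the first is $(\cl_- - \psi^2/c)\cl_+^{-1}[1] = \cl_+\cl_+^{-1}[1] = 1$. For $e_2$: the second component collapses to $1$ and the first to $c^{-1}(\cl_- - \psi^2/c)\cl_+^{-1}\psi - \psi/c = 0$. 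For $e_3$: the second component reduces to $\psi/(2c) = \phi$, and the first to $\f{1}{2c^2}\cl_+\cl_+^{-1}\psi^3 + \cl_+\cl_+^{-1}\psi - \f{\psi^3}{2c^2} = \psi$.

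The inner products \eqref{l:1}--\eqref{l:6} follow by pairing the explicit right-hand sides $(1,0)^T$, $(0,1)^T$, $(\psi,\phi)^T$ against the components of $e_j$ and integrating, using $\phi = \psi^2/(2c)$ and the self-adjointness identity $\dpr{\cl_+^{-1}\psi^3}{\psi} = \dpr{\cl_+^{-1}\psi}{\psi^3}$. The only mildly nontrivial item is \eqref{l:6}: when expanding $\dpr{(\psi,\phi)^T}{e_3}$ one obtains a term $\f{1}{2c^2}\dpr{\cl_+^{-1}\psi^3}{\psi}$ directly and a second copy $\f{1}{2c^2}\dpr{\cl_+^{-1}\psi}{\psi^3}$ from the $\phi$-pairing, which combine by symmetry into the $\f{1}{c^2}\dpr{\cl_+^{-1}\psi^3}{\psi}$ coefficient in \eqref{l:6}. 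There is no genuine obstacle here; the argument is bookkeeping driven by the single identity $\cl_- - \psi^2/c = \cl_+$.
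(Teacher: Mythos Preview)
The paper states this proposition without proof, treating it as a routine verification; your approach via the single identity $\cl_- - \psi^2/c = \cl_+$ is exactly that verification, and your treatment of $e_1$, $e_2$, and the inner products \eqref{l:1}--\eqref{l:6} is correct.

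One slip worth flagging in the $e_3$ check: you write ``the second component reduces to $\psi/(2c)=\phi$,'' but $\phi=\psi^2/(2c)$, and in the first component you use the term $-\psi^3/(2c^2)$, which presupposes the last entry of $(e_3)_2$ is $\psi^2/(2c^2)$ rather than the $\psi/(2c^2)$ printed in the statement. In fact this is a typo in the paper's displayed $e_3$: with the last entry corrected to $\psi^2/(2c^2)$ one gets $\ch e_3=(\psi,\phi)^T$ exactly as you intend (and then \eqref{l:5}, \eqref{l:6} should carry $\int\psi^2/(2c^2)$ and $\int\psi^4/(4c^3)$ in place of $\int\psi/(2c^2)$ and $\int\psi^3/(4c^3)$). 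Modulo this bookkeeping, your argument is complete.
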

{\bf Remark:} An equivalent way of stating Proposition \ref{prop:21} is to say that the first generation vectors in the generalized kernel (but not in $Ker(\ch)$) are exactly $e_j, j=1,2,3$. Moreover, under certain non-degeneracy condition, there are no more  generalized e-vectors. More precisely, we have the following Proposition.
\begin{proposition}
	\label{prop:83} 
	Suppose that the symmetric matrix $\cd=\{\dpr{\ch e_i}{e_j}\}_{i,j=1,2,3}$ is non-degenerate. Then
	\begin{equation}
		\label{p:20} 
		gKer(\p_x \ch)\ominus Ker(\ch)=span[e_1, e_2, e_3].
	\end{equation}
\end{proposition}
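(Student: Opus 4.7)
The plan is to identify $gKer(\p_x\ch) = \bigcup_{k\geq 1} Ker((\p_x\ch)^k)$ by showing the chain of kernels stabilizes at $k = 2$ precisely when $\cd$ is non-degenerate. By Proposition~\ref{prop5}, $Ker(\ch) = span[(\psi', \phi')^T]$ is one-dimensional, and by Proposition~\ref{prop:21}, $\p_x\ch e_j = 0$ for $j=1,2$ while $\p_x\ch e_3 = (\psi',\phi')^T \in Ker(\ch)$, so the inclusion $span[e_1, e_2, e_3]\oplus Ker(\ch) \subset gKer(\p_x\ch)$ is automatic. The content of the proposition is the reverse inclusion.

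The first ingredient is the Fredholm alternative for $\p_x\ch$. Since $(\p_x \ch)^* = -\ch\p_x$ (using self-adjointness of $\ch$ and skew-adjointness of $\p_x$ on periodic functions), its kernel consists of $\vec{h}$ with $\p_x \vec{h} \in Ker(\ch)$; integrating, this yields
$$
Ker(\ch\p_x) = span\left[(1,0)^T,\; (0,1)^T,\; (\psi,\phi)^T\right] = span[\ch e_1,\, \ch e_2,\, \ch e_3].
$$
Consequently, $\vec{g} \in Range(\p_x \ch)$ if and only if $\dpr{\vec{g}}{\ch e_k} = 0$ for $k = 1, 2, 3$.

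Now identify the first two generalized kernels. We have $Ker(\p_x\ch) = \{\vec w: \ch \vec w\text{ is constant}\} = Ker(\ch) \oplus span[e_1,e_2]$, using that any constant vector is automatically orthogonal to $(\psi',\phi')^T$ by periodicity. To determine $Ker((\p_x\ch)^2)$, note that $Ker((\p_x\ch)^2)/Ker(\p_x\ch)$ is in bijection (via $\p_x\ch$) with $Ker(\p_x\ch)\cap Range(\p_x\ch)$. For $\vec{g}= \alpha_1 e_1 + \alpha_2 e_2 + \alpha_0 (\psi',\phi')^T \in Ker(\p_x\ch)$, the range conditions $\dpr{\vec g}{\ch e_k}=0$ simplify --- using symmetry of $\ch$ together with $(\psi',\phi')^T\in Ker(\ch)$ --- to $\sum_{j=1,2} \alpha_j \dpr{\ch e_j}{e_k}=0$ for $k=1,2,3$. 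This is a $3\times 2$ system whose matrix is formed by the first two columns of $\cd$; non-degeneracy of $\cd$ makes these columns linearly independent, forcing $\alpha_1=\alpha_2=0$. Hence the only lift comes from $\alpha_0(\psi',\phi')^T$, producing the single new vector $e_3$ (since $\p_x\ch e_3 = (\psi',\phi')^T$), and $Ker((\p_x\ch)^2) = Ker(\ch)\oplus span[e_1,e_2,e_3]$.

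To close the argument, suppose $\vec{w}\in Ker((\p_x\ch)^3)$, so $\vec g:=\p_x\ch\vec w\in Ker((\p_x\ch)^2)\cap Range(\p_x\ch)$. Writing $\vec{g}= \sum_{j=1}^3 \alpha_j e_j + \alpha_0(\psi',\phi')^T$, the three range conditions collapse to $\cd\,(\alpha_1,\alpha_2,\alpha_3)^T = 0$ (the $\alpha_0$ contributions vanish as before); non-degeneracy of $\cd$ forces $\alpha_1=\alpha_2=\alpha_3=0$, so $\vec g \in Ker(\ch)\subset Ker(\p_x\ch)$. This means $\vec w\in Ker((\p_x\ch)^2)$ already, so $Ker((\p_x\ch)^3)=Ker((\p_x\ch)^2)$, and \eqref{p:20} follows. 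The main technical point is executing the Fredholm step on the appropriate periodic Sobolev space and identifying the cokernel correctly; once that is done, both extension steps reduce to the same linear-algebra identity governed by the symmetric matrix $\cd$, which is why a single non-degeneracy hypothesis controls stabilization at both stages.
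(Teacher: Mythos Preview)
Your proof is correct and follows the same approach as the paper: both use that the cokernel of $\p_x\ch$ is $span[(1,0)^T,(0,1)^T,(\psi,\phi)^T]=span[\ch e_1,\ch e_2,\ch e_3]$, so the solvability conditions for $\p_x\ch\vec z=\sum_j\zeta_j e_j$ collapse to $\cd\vec\zeta=0$. Your version is somewhat more systematic than the paper's, which jumps directly to the contradiction argument; you explicitly verify the intermediate identification $Ker((\p_x\ch)^2)=Ker(\ch)\oplus span[e_1,e_2,e_3]$ via the $3\times 2$ subsystem before closing the chain at level three, whereas the paper tacitly assumes this step.
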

\begin{proof}
	In Proposition \ref{prop:21} we have verified the first generation generalized eigenvectors to be $e_1, e_2, e_3$. The task now is to show that these are in fact all generalized eigenvector.Assuming the opposite, for a contradiction, we have that for some non-trivial triple  $(\zeta_1, \zeta_2, \zeta_3)\neq (0,0,0)$, so that 
	\begin{equation}
		\label{m:10} 
		\p_x \ch \vec{z}=\sum_{j=1}^3 \zeta_j e_j.
	\end{equation}
Since 
$$
\dpr{\p_x \ch \vec{z}}{\begin{pmatrix} 1\\ 0  \end{pmatrix}}=0, \ \ \dpr{\p_x \ch \vec{z}}{\begin{pmatrix} 0\\ 1  \end{pmatrix}}=0, \ \ \dpr{\p_x \ch \vec{z}}{\begin{pmatrix} \psi\\ \phi  \end{pmatrix}}=0, 
$$
we conclude that 
$$
\sum_{j=1}^3 \zeta_j e_j\perp span[\begin{pmatrix} 1\\ 0  \end{pmatrix}, 
\begin{pmatrix} 0\\ 1  \end{pmatrix}, \begin{pmatrix} \psi\\ \phi  \end{pmatrix}]=span[\ch e_1, \ch e_2, \ch e_3]. 
$$
Writing out the inner products to be zero, yields the system 
$
\sum_{j=1}^3  \dpr{\ch e_k}{ e_j} \zeta_j=0, k=1,2,3, 
$
or equivalently 
$$
\cd\vec{\zeta}=\cd\begin{pmatrix}
	\zeta_1 \\ \zeta_2 \\\zeta_3
\end{pmatrix}=0.
$$
But by assumption, $\cd$ is non-singular, hence $\vec{\zeta}=0$, a contradiction. Hence, \eqref{p:20} holds. 
\end{proof}

\section{Spectral stability - computation of the instability index}
\label{sec:6}
We start with a discussion of the general properties of the eigenvalue problem \eqref{41}. 
\subsection{Introduction to the instability index count}
With respect to the standard inner product on $L^2[0,L]$, we have  $(\p_x)^*=-\p_x, \ch=\ch^*$, whence \eqref{41} is a Hamiltonian eiganvalue problem. In particular, it obeys the symmetry, that if $\la\in \si(\p_x \ch)$, then $-\la, \pm \bar{\la} \in \si(\p_x \ch)$. Regarding the number of unstable eigenalues, under the assumption that a lot of work has been done in the last twenty years or so, \cite{Pel, Kap}, culminating with \cite{LZ}. In order to state the relevant result, introduce the following spectral counts 
\begin{eqnarray*}
	k_r &=& \# \{\la\in \si_{p}(\p_x \ch): \la>0\} \\
	k_c &=& \# \{\la\in \si_p(\p_x \ch): \Re \la>0, \Im \la>0\}\\
	k_{i}^- &=& \# \{i \mu, \mu>0: \cj\cl f=i \mu f, \dpr{\cl f}{f}<0 \}.
\end{eqnarray*}
That said, and assuming that the genealized eigenspace 
$$
gKer(\p_x \ch)=\cup_{l=1}^\infty \{u: (\p_x \ch)^l u=0, l=1, 2, \ldots\}
$$
is finite dimensional, introduce a basis $gKer(\p_x \ch)=span\{\eta_j: j=1, \ldots, N\}$. It turns out that a symmetric matrix $\cd$, defined via 
$$
\cd_{i j}:=\dpr{\ch \eta_i}{\eta_j}, i,j=1, \ldots, N
$$
encodes an important relevant information, namely the formula 
\begin{equation}
	\label{krein} 
	k_{Ham.}:=k_r+2k_c+2k_i^-=n(\ch)-n(\cd),
\end{equation}
where $n(\ch), n(\cd)$ are the Morse indices\footnote{Assumed to be finite} of the operator $\ch$ and the 
$N\times N$ matrix $\cd$ respectively. 
Note that by Proposition \ref{p:11}, we have that $n(\ch)=2$, whence 
\begin{equation}
	\label{l:18} 
	k_{Ham.}=2-n(\cd).
\end{equation}
In addition,  note that for the purposes of computing one may remove vectors $\eta_k\in Ker(\ch)$ as the quantity $n(\cd)$ is unaffected by the zero rows and columns corresponding to $\eta_k\in Ker(\ch)$. So, formula \eqref{p:20} provides three such generalized eigenfunctions, namely $e_1, e_2, e_3$, which is a complete system, provided some non-degeneracy conditions are met (to be verified later). 

That means, again by Proposition \ref{prop:21},  that it is enough to consider the matrix $\cd$ with entries given by \eqref{l:1} through \eqref{l:6}. Our plan is to compute these quantities in terms of the parameters $\ka\in (0,1)$ and $L\in (0, \infty)$. This is possible, due to the formulas \eqref{2.27}, \eqref{2.28}, \eqref{2.31}, \eqref{2.a10}. 

Our next task is to find a way to compute the inner products appearing in the formulas for $\dpr{\ch e_i}{e_j}$, \eqref{l:1},..., \eqref{l:6}, which involve the operator $\cl_+^{-1}$. Thus, we aim at constructing a computable framework for this operator. Recall that by the translational invariance\footnote{or better yet by taking derivative in $x$ in the profule equation \eqref{2.5}}, we find that $\cl_+[\psi']=0$. The task is then to find another, linearly independent element in the $Ker(\cl_+)$. As expected, such element {\it is not a peridic function}, but it will allow us, modulo some minor modifications to construct $\cl_+^{-1}$. Details are below in Lemma \ref{le:po}.  

\subsection{Construction of $\cl_+^{-1}$}
\begin{lemma}
	\label{le:po} 
	The function $\vp:[0,L]\to \rone$, 	
	\begin{eqnarray*}
& & 	 \varphi(x) =  \frac{1}{2\alpha^2 \eta_4(\kappa^2+\beta^2)} \left[\frac{[1+\beta^2sn^2(\alpha x)]^2[1-2sn^2(\alpha x)]}{dn^2(\alpha x)} \right.\\
	& - & \left. \alpha \frac{sn(\alpha x)cn(\alpha x)dn(\alpha x)}{[1+\beta^2sn^2(\alpha x)]^2}  \int_{0}^{x}{\frac{[1+\beta^2sn^2(\alpha s)]^3[3(\kappa^2+\beta^2)+5\beta^2dn^2(\alpha s)][1-2sn^2(\alpha s)}{dn^4(\alpha s)}}ds \right].
	\end{eqnarray*} 
where $\al=\f{2K(\ka)}{L}$, satisfies 
$
\cl_+[\vp]=0.
$

 As a consequence, we can define the inverse $\cl_+^{-1}$, {\bf for an even function $f$}, as follows
  \begin{eqnarray}
  	\label{l:90}
  	 \mathcal{L}_+ ^{-1} f (x) &=&  \psi'(x) \int_{0}^{x}{\varphi(s)f(s)}ds-\varphi(x)\int_{0}^{x}{\psi'(s)f(s)}ds+
  	 C_f\varphi(x), \\
  	 \nonumber
  	 C_f &=& \int_{0}^{\frac{L}{2}}\psi'f-\frac{\psi''(\frac{L}{2})}{2\varphi'(\frac{L}{2})}\langle \varphi ,f\rangle.
  \end{eqnarray}
\end{lemma}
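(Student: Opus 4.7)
The plan has two parts corresponding to the two claims: verifying $\cl_+\varphi = 0$ and deriving the representation formula for $\cl_+^{-1}$.

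First, I would view $\varphi$ as a second linearly independent solution of $\cl_+ u = 0$, obtained by reduction of order from $\psi' \in Ker(\cl_+)$. Writing $\varphi(x) = \psi'(x)\eta(x)$ reduces $\cl_+\varphi = 0$ to the first-order equation $\psi'\eta'' + 2\psi''\eta' = 0$, giving $\eta'(x) = W/[\psi'(x)]^2$ for the (constant) Wronskian $W$. Since $\psi'$ vanishes at $x=0, L/2, L$, this integral is improper; a single integration by parts regularizes it and naturally produces the structure $\varphi = (\text{periodic part}) + (\psi'\text{-like factor})\cdot I(x)$ visible in the stated formula. The direct verification then factors cleanly: the coefficient of $I(x)$ in $\cl_+\varphi$ vanishes because, using the closed form
\[
\psi'(x) = -2\alpha\eta_4(\ka^2+\beta^2)\cdot \frac{sn(\alpha x)\,cn(\alpha x)\,dn(\alpha x)}{[1+\beta^2 sn^2(\alpha x)]^2},
\]
the periodic prefactor of $I(x)$ in $\varphi$ is proportional to $\psi'$ and hence lies in $Ker(\cl_+)$. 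The remaining $I$-independent part vanishes after invoking the Jacobi identities $(sn)' = cn\,dn$, $(cn)' = -sn\,dn$, $(dn)' = -\ka^2\,sn\,cn$, $sn^2+cn^2 = 1 = dn^2+\ka^2 sn^2$, together with the profile equation \eqref{2.5}.

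Second, I would apply standard variation of parameters: with $\psi', \varphi$ linearly independent solutions of $\cl_+u = 0$ and constant Wronskian $W = \psi'\varphi' - \psi''\varphi$, a particular solution of $\cl_+ u = f$ is $u_p(x) = W^{-1}\bigl[\psi'(x)\int_0^x \varphi f - \varphi(x)\int_0^x \psi' f\bigr]$. The overall prefactor $[2\alpha^2\eta_4(\ka^2+\beta^2)]^{-1}$ in the definition of $\varphi$ is chosen precisely so that $W = -\psi''(0)\,\varphi(0)$ takes the value required to match \eqref{l:90}; pinning down this normalization is an auxiliary calculation. The general periodic solution then reads $u = u_p + C_f\varphi$, since any additive $\psi'$ component is absorbed into $Ker(\cl_+)$.

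To determine $C_f$ I would exploit symmetries. Since $\varphi$ is even about $0$ (its first piece depends only on $sn^2, dn^2$, and its second is a product of two factors each odd in $x$) and $\psi'$ is odd about both $0$ and $L/2$, for even $f$ we have $\int_0^L \psi' f = 0$ and $\varphi'(0) = 0$; consequently $u(0) = u(L)$ and $u'(0) = 0$ hold automatically. The remaining condition $u'(L/2) = 0$ yields $C_f = -\tfrac{\psi''(L/2)}{\varphi'(L/2)}\int_0^{L/2}\varphi f + \int_0^{L/2}\psi' f$. To recast this in the form stated in \eqref{l:90}, I would use the reflection identity $\varphi(L-x) - \varphi(x) = -\tfrac{2\varphi'(L/2)}{\psi''(L/2)}\psi'(x)$, derived by observing that $\varphi(L-\cdot)$ also solves $\cl_+ u = 0$ (because $\psi^2$ is even about $L/2$) and matching values and derivatives at $x = L/2$; this identity converts $\int_0^{L/2}\varphi f$ into a combination of $\tfrac{1}{2}\langle\varphi, f\rangle$ and $\int_0^{L/2}\psi' f$.

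The principal obstacle is the symbolic bookkeeping in the first part: the direct verification that $\cl_+\varphi = 0$ involves many Jacobi-function monomials whose cancellations require careful systematic tracking. A secondary difficulty is fixing the Wronskian normalization correctly, since all numerical constants in $\psi$ and $\varphi$ must conspire to give $W$ the value implicit in \eqref{l:90}.
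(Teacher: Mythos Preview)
Your proposal is correct and follows essentially the same approach as the paper: reduction of order from $\psi'$ (the paper calls this D'Alembert's formula and gives the specific Jacobi identities $\tfrac{1}{sn^2}=-\tfrac{1}{dn}\partial_y\tfrac{cn}{sn}$, $\tfrac{1}{cn^2}=\tfrac{1}{dn}\partial_y\tfrac{sn}{cn}$ to regularize the integral $\int (\psi')^{-2}$), followed by variation of parameters with the Wronskian normalized to $1$, and the constant $C_f$ fixed by periodicity. Your treatment of $C_f$ is in fact more detailed than the paper's (which simply asserts the choice enforces periodicity); note, however, that carrying out your reflection identity actually collapses $C_f$ to $-\tfrac{\psi''(L/2)}{2\varphi'(L/2)}\langle\varphi,f\rangle$, so the match with the stated formula hinges on whether $\langle\varphi,f\rangle$ is taken over $[0,L]$ or $[-L/2,L/2]$ (the two differ since $\varphi$ is not periodic)---your intermediate expression $\int_0^{L/2}\psi'f-\tfrac{\psi''(L/2)}{\varphi'(L/2)}\int_0^{L/2}\varphi f$ already coincides with the paper's formula under the latter convention.
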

{\bf Remarks:} 
\begin{itemize}
	\item The choice of $C_f$ is made so that the function on the right-hand side of \eqref{l:90} is $L$ periodic, and hence belongs to  $D(\cl_+)=H^2_{per.}[0,L]$. 
	\item The function $\vp$, while smooth in $[0,L)$ is notably {\bf not an $L$ periodic function}, as alluded to above. In fact, while it is the case that  $\vp(0)=\vp(L)$, we shall be able to  check that $\vp'(0)\neq \vp'(L)$. In fact,   in the proof below, we show that it suffices to verify the technical condition \eqref{l:955} to conclude that $\vp'(0)\neq \vp'(L)$. 
\end{itemize}
\begin{proof}
	Since $ \mathcal{L}_+ \psi' = 0 $,  we have by D'Alembert's formula the function $\varphi$
	\begin{equation}
		\label{l:92} 
			\varphi(x) := \psi'(x)\int^{x}{\frac{1}{\psi'^2(s)}}ds, \; \; \left| \begin{array}{cc} \psi'& \varphi \\ \psi'' & \varphi'\end{array}\right|=1
	\end{equation}
{\it formally} satisfies $\cl_+ \vp=0$. 
	Formula, such as \eqref{l:92} is problematic, because  $\psi'$ has  zeros. Instead,   by using the identities
	$$
	\frac{1}{sn^2(y,\kappa)}=-\frac{1}{dn(y, \kappa)}\frac{\partial}{\partial y}\frac{cn(y, \kappa)}{sn(y, \kappa)}, \; \; \frac{1}{cn^2(y,\kappa)}=\frac{1}{dn(y, \kappa)}\frac{\partial}{\partial y}\frac{sn(y, \kappa)}{cn(y, \kappa)}$$
	and after  integrating by parts, we get an equivalent to \eqref{l:92}  formula 
	$$
	\begin{array}{ll} \varphi(x) &=\frac{1}{2\alpha^2 \eta_4(\kappa^2+\beta^2)}\left[\frac{[1+\beta^2sn^2(\alpha x)]^2[1-2sn^2(\alpha x)]}{dn^2(\alpha x)}\right. \\
		\\
		& \left.- \alpha \frac{sn(\alpha x)cn(\alpha x)dn(\alpha x)}{[1+\beta^2sn^2(\alpha x)]^2}  \int_{0}^{x}{\frac{[1+\beta^2sn^2(\alpha x)]^3[3(\kappa^2+\beta^2)+5\beta^2dn^2(\alpha x)][1-2sn^2(\alpha s)]}{dn^4(\alpha s)}}ds\right].
	\end{array}
	$$
	which introduces an even function $\vp$. Clearly $\vp(0)=\vp(L)$, while looking at $\vp'$ through the product rule, we see that it contains $L$ periodic functions, with the exception of 
	$$
	\f{d}{dx} \left[\frac{sn(\alpha x)cn(\alpha x)dn(\alpha x)}{[1+\beta^2sn^2(\alpha x)]^2} \right] \int_{0}^{x}{\frac{[1+\beta^2sn^2(\alpha x)]^3[3(\kappa^2+\beta^2)+5\beta^2dn^2(\alpha x)][1-2sn^2(\alpha s)]}{dn^4(\alpha s)}}ds.
	$$
	Since $\f{d}{dx} \left[\frac{sn(\alpha x)cn(\alpha x)dn(\alpha x)}{[1+\beta^2sn^2(\alpha x)]^2} \right]|_{x=L}\neq 0$, it remains to observe  that under the condition, 
	\begin{equation}
		\label{l:955} 
		\int_{0}^{2K(\ka)}{\frac{[1+
				\beta^2 sn^2(s)]^3[3(\kappa^2+\beta^2)+5\beta^2
				dn^2(s)][1-2sn^2(s)]}{dn^4(s)}}ds\neq 0, 
	\end{equation}
	we have that indeed $\vp'(0)\neq \vp'(L)$, and so $\vp$ is not $L$ periodic. 
\end{proof}
Note that we need to compute $\cl_+^{-1}1, \cl_+^{-1}\psi$, all even functions, so  formula \eqref{l:90} applies.  Taking inner product with $1$ and $\psi$ respectively and integration by parts implies 
 \begin{equation}
	\label{es1}
	\left\{ \begin{array}{ll}
		\langle \mathcal{L}_+^{-1} 1,1\rangle=-2\langle \psi, \varphi\rangle+\left( 2\psi (\frac{L}{2})-\frac{\psi''(\frac{L}{2})}{2\varphi'(\frac{L}{2})}\langle \varphi,1\rangle\right)\langle \varphi,1\rangle \\
		\\
		\langle \mathcal{L}_+^{-1}\psi,1\rangle=-\frac{3}{2}\langle\psi^2,\varphi\rangle+\frac{1}{2}\psi^2(\frac{L}{2})\langle\varphi,1\rangle+\left( \psi(\frac{L}{2})-\frac{\psi''(\frac{L}{2})}{2\varphi'(\frac{L}{2})}\langle \varphi,1\rangle\right)\langle \psi, \varphi\rangle \\
		\\
		\langle \mathcal{L}_+^{-1}\psi,\psi\rangle=-\langle\psi^3,\varphi\rangle+\left( \psi^2(\frac{L}{2})-\frac{\psi''(\frac{L}{2})}{2\varphi'(\frac{L}{2})}\langle \psi, \varphi\rangle\right)\langle \psi, \varphi\rangle
	\end{array} \right.
\end{equation}
Let us record the formula $\psi^3=-c\mathcal{L}_+\psi-cF_1$. Applying $\cl_+^{-1}$ to it leads to 
\begin{equation}
	\label{l:95} 
	\cl_+^{-1} [\psi^3]=-c \psi-cF_1 \cl_+^{-1} [1].
\end{equation}
Taking inner product with $1, \psi$  leads to the expressions 
\begin{eqnarray}
	\label{l:98}
	\langle \mathcal{L}_+^{-1}\psi^3,1\rangle=-c\langle \psi,1\rangle-c F_1\langle \mathcal{L}_+^{-1} 1,1\rangle \\
	\label{l:99}
	\langle \mathcal{L}_+^{-1}\psi^3,\psi\rangle=-c\langle \psi,\psi\rangle-c F_1\langle \mathcal{L}_+^{-1}\psi, 1\rangle 
\end{eqnarray}
which reduces to the expressions in \eqref{es1}. 
Taking inner product with $\psi^3$ yields 
\begin{equation}
	\label{l:101} 
	\dpr{\cl_+^{-1} \psi^3}{\psi^3}=-c\int \psi^4-c F_1 	\dpr{\cl_+^{-1} \psi^3}{1},
\end{equation}
which itself reduces to \eqref{l:98}. 

Thus, based on the formulas \eqref{es1}, \eqref{l:98},  \eqref{l:99},  \eqref{l:101}, we can in compute all the   entries in the matrix $\cd$, provided 
\subsection{Computing the matrix $\cd$ and the Hamiltonian index}
Our first task is to compute \\ $\dpr{\vp}{1}, \dpr{\vp}{\psi}$.
We have found no easier way other than to use the explicit formula provided in Lemma \ref{le:po}, which yields after integration by parts to 
\begin{eqnarray}
	\label{l:102}
	\langle \varphi, 1\rangle &=& \frac{L^3}{8K^3(\kappa)\eta_4 (\kappa^2+\beta^2)}[A_1+
	\frac{1}{2(\kappa^2+\beta^2)}\frac{1-\kappa^2}{1+\beta^2}A_2-\frac{1}{2(\kappa^2+\beta^2)}A_3]\\
	\label{l:104} 
	\langle \varphi, \psi\rangle &=& \frac{L^3}{8K^3(\kappa)(\kappa^2+\beta^2)}[A_4+
	\frac{1}{4(\kappa^2+\beta^2)}\frac{(1-\kappa^2)^2}{(1+\beta^2)^2}A_5-\frac{1}{4(\kappa^2+\beta^2)}A_6],
\end{eqnarray}
where 
\begin{eqnarray*}
	A_1 &=& \int_{0}^{K(\kappa)}{\frac{[1+\beta^2sn^2x]^2[1-2sn^2x]}{dn^2x}}dx\\
	A_2 &=& \int_{0}^{K(\kappa)}{\frac{[1+\beta^2sn^2x]^3[3(\kappa^2+\beta^2)+5\beta^2dn^2x][1-2sn^2x]}{dn^4x}}dx\\
	A_3 &=& \int_{0}^{K(\kappa)}{\frac{[1+\beta^2sn^2x]^2[3(\kappa^2+\beta^2)+5\beta^2dn^2x][1-2sn^2x]}{dn^2x}}dx\\
	A_4 &=& \int_{0}^{K(\kappa)}{[1+\beta^2sn^2x]^2[1-2sn^2x]}dx\\
	A_5 &=& \int_{0}^{K(\kappa)}{\frac{[1+\beta^2sn^2x]^3[3(\kappa^2+\beta^2)+5\beta^2dn^2x][1-2sn^2x]}{dn^4x}}dx\\
	A_6&=& \int_{0}^{K(\kappa)}{[1+\beta^2sn^2x][3(\kappa^2+\beta^2)+5\beta^2dn^2x][1-2sn^2x]}dx.
\end{eqnarray*}
Next, note the integration by parts formula for an even function $h$,
 implies\footnote{Note the extra boundary term, due to the fact that $\vp$ is not periodic}
\begin{equation}
	\label{l:100} 
	\langle\varphi'', h\rangle=2 h (\frac{L}{2})\varphi'(\frac{L}{2})+\langle\varphi,h''\rangle.
\end{equation}
Now, taking inner product of \eqref{l:95} with $\vp$, and  the integration by parts formula \eqref{l:100}, taking into account $\cl_+[\vp]=0$ yields 
\begin{equation}
	\label{l:105}
	\langle \psi^3,\varphi\rangle=- 2c\psi \left(\frac{L}{2}\right) \varphi'\left(\frac{L}{2}\right) - c F_1\langle\varphi,1\rangle.
\end{equation}
Similarly, $\dpr{\cl_+ \vp}{1}=0$, integration by parts \eqref{l:100} and $\cl_+ 1=c - \f{3 \psi^2}{2c}$, imply 
\begin{equation}
	\label{l:106}
\langle \psi^2, \varphi \rangle = -\frac{4c}{3} \varphi' \left(\frac{L}{2}\right) + \frac{2c^2}{3} \langle \varphi, 1 \rangle.
\end{equation}
Note that the remaining quantities involved in the formulas are directly calculated from the respective definitions as follows 
  \begin{eqnarray}
  	\label{l:110} 
  	\psi(\frac{L}{2}) &=& \eta_4\frac{1-\kappa^2}{1+\beta^2}, \ \ \ 
  	\varphi'(\frac{L}{2})=\frac{L (1-\ka^2)}{4\eta_4 K(\kappa) (\ka^2+\be^2)(1+\be^2)^2} A_2\\
  \label{l:112}
  	\psi''(\frac{L}{2}) &=& \frac{8\eta_4K^2(\kappa)}{L^2}\frac{(1-\kappa^2)(1+\beta^2)+\beta^2(1-\kappa^2)^2}{(1+\beta^2)^2}\\
  \label{l:114} 
  	\langle\psi,1\rangle &=& \frac{\eta_4L}{K(\kappa)}\int_{0}^{K(\kappa)}\frac{dn^2x}{1+\beta^2sn^2x}dx, \ \ \ \  \	\langle \psi, \psi\rangle =  \frac{\eta_4^2L}{K(\kappa)}\int_{0}^{K(\kappa)}\frac{dn^4x}{[1+\beta^2sn^2x]^2}dx, \\
  	\label{l:116} 
  \int \psi^3 &=& \frac{\eta_4^3 L}{K(\kappa)}\int_{0}^{K(\kappa)}\frac{dn^6 x}{[1+\beta^2sn^2x]^3}dx, \ \ 
  \int \psi^4 =\frac{\eta_4^4 L}{K(\kappa)}\int_{0}^{K(\kappa)}\frac{dn^8 x}{[1+\beta^2sn^2x]^4}dx 
  \end{eqnarray}
 Finally, the formulas for $\eta_4, \be^2, F_1, c$, in terms of $\ka, L$,  can be found in the statement of Proposition \ref{prop:10}, namely 
 \eqref{2.27}, \eqref{2.28}, \eqref{2.31}, \eqref{2.a10}. 
 
 This allows us, in principle and after putting it all together the entries of the matrix $\cd$. Note that the period $L$ enters as a multiplicative constant everywhere, and can be pulled out of a calculation involving the eigenvalues of $\cd$. For the remaining paramter, we use  the symbolica and numerical capabilities of  Mathematica to compute,  as the formulas and the calculations are too long and convolved.  
 
 To illustrate the instability, we found convenient to compute the determinant of $\cd$.  We verify, through the numerics outlined above  - formulas \eqref{l:1}, \ldots, \eqref{l:6}, followed by \eqref{es1}, \ldots,  \eqref{l:105} and finally \eqref{l:110}, \dots  \eqref{l:116}. The result, and the corresponding relevant graphs are displayed below, for $L=1$. 
 It is clear that as $\det(\cd(\ka))<0$, the matrix $\cd$ has exactly one negative eigenvalue, so $n(\cd)=1$, whence by formula \eqref{l:18}, we find that $k_{Ham.}=1$, thus ensuring spectral instability for the eigenvalue problem \eqref{41}. 
	\begin{figure}
	\centering
	\includegraphics[width=0.7\linewidth]{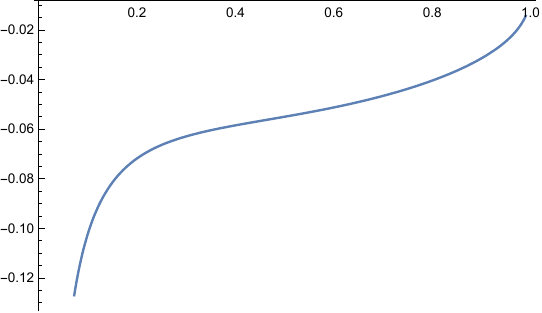}
	\caption{Graph of 	$\ka\to \det(\cd(\ka))$ }
	\label{fig:a2}
\end{figure}

\appendix
\section{Proof of Proposition \ref{p:10}}
We have already verified, by direct inspection that $\psi'\in Ker(\cl_+)$. It remains to show that there is no other element in $Ker(\cl_+)$. As explained in the remarks after Lemma \ref{le:po},   it suffices to show that $\vp$ is not a periodic function, which amounts to \eqref{l:955}. Since the integrand is even and $2K(\ka)$ periodic, matters reduce to checking 
$$
A_2(\ka) =	\int_{0}^{K(\ka)}{\frac{[1+
		\beta^2 sn^2(s)]^3[3(\kappa^2+\beta^2)+5\beta^2
		dn^2(s)][1-2sn^2(s)]}{dn^4(s)}}ds\neq 0.
$$
We have, in the course of our computations for the matrix $\cd$,  have evaluated $A_2$ as a function of $\ka$. As one can see, $A_2(\ka)<0$ for all values of the parameter,  see Figure \ref{fig:a1}. 
	\begin{figure}
	\centering
	\includegraphics[width=0.7\linewidth]{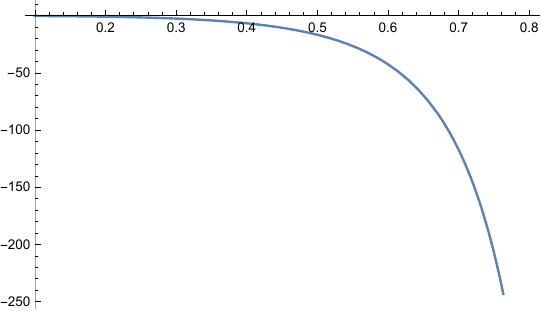}
	\caption{Graph of 	$A_2(\ka)$ }
	\label{fig:a1}
\end{figure}

\section{Proof of Proposition \ref{p:9}}
We briefly review  some known definitions, facts and
results about Hill operators.
The Hill operator
\begin{equation}
	\label{3.5}
	\mathcal{L}_c (y) = - y'' + Q (c, x) y
\end{equation}
with a smooth $L$-periodic with respect to $x$ potential $Q (c,
x)$ has a spectrum
$$
\lambda_0 < \lambda_1 \leq \lambda_2 < \lambda_3 \leq \lambda_4
\ldots < \lambda_{2n-1} \leq \lambda_{2n} < \ldots
$$
and equality means that $\lambda_{2n-1} = \lambda_{2n}$ is a
double eigenvalue \cite{Magnus}. Suppose $p (x)$ is an
eigenfunction of $\mathcal{L}_c$ associated to the eigenvalue
$\lambda$. If $\lambda = \lambda_{2n-1}$ or $\lambda =
\lambda_{2n}$ then
$p (x)$ has exactly $2n$ zeroes in $[0, L)$.
It is important to know whether any properties of $p$  help to
establish whether $\lambda$ is simple or not, and if it is simple,
whether $\lambda = \lambda_{2n-1}$ or $\lambda = \lambda_{2n}$. A
suitable tool to answer these questions is developed by Neves
\cite{Neves2}. We will come back to this result in just a moment.

Let $n_{-}$ be the dimension of the negative subspace of
$\mathcal{L}_c$ and $n_0$ be the dimension of the null subspace of
$\mathcal{L}_c$. The inertial index of $\mathcal{L}_c$ is defined
as $in (\mathcal{L}_c) := (n_{-}, n_0)$. A family of self-adjoint
operators $\mathcal{L}_c$ depending on a parameter $c$ in certain
interval $V \subset \mathbb{R}$, is called {\it isoinertial} if
the the inertial index does not depend on the parameter. The
following theorem gives a sufficient condition for isoinertiality.
\begin{thm}
	\label{thm3.1} (Neves \cite{Neves1}) Let $\mathcal{L}_c$ be the
	Hill operator (\ref{3.5}) defined in $L^2 _{per} ([0, L])$ with
	domain $D (\mathcal{L}_c) = H^2 _{per} ([0, L])$. If $\lambda =0$
	is an eigenvalue of $\mathcal{L}_c$ for every $c$ in an open
	interval $V \subset \mathbb{R}$ and the potential $Q (c, x)$ is
	continuously differentiable with respect to all variables, then
	the family of operators $\mathcal{L}_c, c \in V$ is isoinertial.
\end{thm}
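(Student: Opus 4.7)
The plan is to use continuous dependence of the Hill spectrum on the coefficient $c$, together with the assumption that zero is pinned in the spectrum of every $\mathcal{L}_c$, to show that the inertial index $(n_-(c),n_0(c))$ is locally constant on the interval $V$; since $V$ is connected, this will give global constancy.

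First I would invoke Kato's perturbation theory for self-adjoint operators. Since $Q(c,\cdot)$ is $C^1$ in $c$ with values in $L^\infty_{per}[0,L]$, the family $\{\mathcal{L}_c\}_{c\in V}$ is a $C^1$ family of self-adjoint operators on $L^2_{per}[0,L]$ with common domain $H^2_{per}[0,L]$ and compact resolvent. Consequently the eigenvalues $\lambda_0(c)\le\lambda_1(c)\le\lambda_2(c)\le\cdots$, listed with multiplicity, are continuous functions of $c\in V$, with each branch locally smooth away from level crossings. The Hill spectrum satisfies the interlacing $\lambda_0(c)<\lambda_1(c)\le\lambda_2(c)<\lambda_3(c)\le\lambda_4(c)<\cdots$, where strict inequalities separate eigenvalues of different Floquet type.

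Fix $c_0\in V$ and consider first the generic case $n_0(c_0)=1$, so that $\lambda_k(c_0)=0$ is a simple eigenvalue with $\lambda_{k-1}(c_0)<0<\lambda_{k+1}(c_0)$. Standard Rellich-type results give that $\lambda_k(c)$ is smooth in a neighborhood $U\ni c_0$, and the gap $\lambda_{k-1}(c)<0<\lambda_{k+1}(c)$ persists on $U$. The hypothesis $0\in\sigma(\mathcal{L}_c)$ for all $c\in U$ then forces $\lambda_k(c)\equiv 0$ on $U$: if $\lambda_k(c^\star)\ne 0$ for some $c^\star\in U$, the preserved gap would exclude any other eigenvalue being zero, contradicting the hypothesis. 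Hence $n_-(c)\equiv k$ and $n_0(c)\equiv 1$ on $U$. In the degenerate case $n_0(c_0)=2$, we have a co-periodic pair $\lambda_{2n-1}(c_0)=\lambda_{2n}(c_0)=0$ sandwiched by $\lambda_{2n-2}(c_0)<0<\lambda_{2n+1}(c_0)$. Kato-Rellich theory for degenerate eigenvalues under $C^1$ parameter dependence gives two continuous branches $\lambda_{2n-1}(c),\lambda_{2n}(c)$ with $\lambda_{2n-1}(c)\le \lambda_{2n}(c)$, still bounded away from the unaffected eigenvalues on a small neighborhood $U$. I would then argue that both branches must be identically zero on $U$, by exploiting the Floquet discriminant: the hypothesis that zero is a periodic eigenvalue throughout $V$ amounts to $\Delta(0;c)\equiv 2$ on $V$, and a double eigenvalue at zero corresponds to the additional vanishing of $\Delta(0;c)-2$ to second order in the spectral parameter at $c_0$. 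A splitting of the pair on one side of $c_0$, with only one branch remaining at zero, would break the parity structure of $\Delta(\lambda;c)$ near $(0,c_0)$ while still requiring $\Delta(0;c)=2$; the Hadamard factorization of $\Delta(\lambda;c)-2$ near its double root then forces the second root to remain pinned at $\lambda=0$, i.e., both branches stay at zero on $U$. Therefore $(n_-(c),n_0(c))=(2n-1,2)$ is constant on $U$.

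Combining the two cases gives local constancy of $(n_-(c),n_0(c))$ throughout $V$, and connectedness of $V$ upgrades this to the claimed isoinertiality. The main obstacle is the degenerate case: ruling out a one-sided splitting of a double zero eigenvalue is not implied by continuity of eigenvalues alone, and requires a careful use of the structure of the Floquet discriminant (or, equivalently, an analysis of the Riccati equation associated with $\mathcal{L}_c$) to transfer the hypothesis $\Delta(0;c)\equiv 2$ into a constraint on the \emph{order} of the root of $\Delta(\cdot;c)-2$ at $\lambda=0$.
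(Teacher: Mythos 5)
The paper does not prove this statement; it is attributed to and cited from Neves \cite{Neves1} and used as a black box in Appendix~B. There is therefore no ``paper's own proof'' against which to compare yours, and your argument must be assessed on its own.

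Your treatment of the non-degenerate case $n_0(c_0)=1$ is correct and is the standard Kato--Rellich continuity argument. Your treatment of the degenerate case $n_0(c_0)=2$, however, has a genuine gap, which you in fact partly acknowledge in your final sentence, contradicting the confident tone of the paragraph preceding it. The assertion that ``the Hadamard factorization of $\Delta(\lambda;c)-2$ near its double root then forces the second root to remain pinned at $\lambda=0$'' does not follow. Factoring $\Delta(\lambda;c)-2 = a(\lambda,c)\,\bigl(\lambda-\lambda_1(c)\bigr)\bigl(\lambda-\lambda_2(c)\bigr)$ near $(0,c_0)$ with $a(0,c_0)\neq 0$ and $\lambda_1(c_0)=\lambda_2(c_0)=0$, the hypothesis $\Delta(0;c)\equiv 2$ yields only $\lambda_1(c)\lambda_2(c)=0$, i.e.\ that at least one branch stays at $0$; it does not pin the other. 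Nothing in the analyticity or ``parity'' of the discriminant by itself forbids a one-sided splitting $\lambda_{2n-1}(c)<0=\lambda_{2n}(c)$ for $c<c_0$ and $\lambda_{2n-1}(c)=0<\lambda_{2n}(c)$ for $c>c_0$, under which the inertial index jumps $(2n,1)\to(2n-1,2)\to(2n-1,1)$ across $c_0$. Ruling out exactly this splitting is the nontrivial content of the statement, and it requires additional input beyond what you have offered --- in Neves' setting the zero eigenfunction is $\psi_c'$, which varies $C^1$-smoothly with $c$, and that extra structure can be exploited. Your proposal as written does not close this hole.
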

In our case the operator $\mathcal{L}_c$ takes the form
\begin{equation}
	\label{3.15}
	\cl_+=\mathcal{L}_c: = - \partial_{\xi \xi} + c -
	\frac{3}{2 c} \psi ^2 (\xi).
\end{equation}
Recall that $\lambda = 0$ is an eigenvalue of $\mathcal{L}_c$ with
the eigenfunction $p (\xi) = \psi' (\xi)$. 
Direct computations from (\ref{2.16}) give that
$$
\psi ' (\xi) = - \frac{\eta_4 (\kappa^2 + \beta^2 )}{a \sqrt{c}}
\frac{sn (\frac{\xi}{2a\sqrt{c}}) cn (\frac{\xi}{2ga\sqrt{c}}) dn
	(\frac{\xi}{2a\sqrt{c}})} {\left(1 + \beta^2 sn^2
	(\frac{\xi}{2a\sqrt{c}})\right)^2} .
$$
Since $\psi ' (\xi)$ satisfies a linear equation, we can drop the
constant in the above expression for simplicity, so
\begin{equation}
	\label{3.19}
	p (\xi) = \frac{sn (\frac{\xi}{2a \sqrt{c}}) cn
		(\frac{\xi}{2a\sqrt{c}}) dn (\frac{\xi}{2a\sqrt{c}})} {\left(1 +
		\beta^2 sn^2 (\frac{\xi}{2a\sqrt{c}})\right)^2} .
\end{equation}
Also, 
\begin{equation}
	\label{3.20}
	p (0) = 0, \qquad p' (0) =  \frac{1}{2 a \sqrt{c}}.
\end{equation}
Clearly, $p (\xi)$ has two zeroes in $[0, L)$. Then the Floquet
Theory suggests that there are three possibilities:
\begin{enumerate}
	\item  $\lambda_0 < \lambda_1 = \lambda_2 = 0 \rightarrow
	in(\mathcal{L}_c) = (1, 2)$;
	\item  $\lambda_0 < \lambda_1 = 0 < \lambda_2  \rightarrow
	in(\mathcal{L}_c) = (1, 1)$;
	\item 
	$\lambda_0 < \lambda_1  < \lambda_2 = 0 \rightarrow
	in(\mathcal{L}_c) = (2, 1)$;
\end{enumerate}
We are interested in the number of negative eigenvalues
related to $\mathcal{L}_c$.
To take advantage of Theorem \ref{thm3.1} we consider the Hill
equation
\begin{equation}
	\label{3.21}
	- y'' (\xi) + \left(c - \frac{3}{2 c} \psi ^2 (\xi)
	\right) y (\xi) = 0.
\end{equation}
We already know that $p (\xi)$ is a L-periodic solution of
(\ref{3.21}) for every $c$. The classical Floquet Theory
\cite{Magnus} states that, there is another smooth solution $q
(\xi)$, such that $q (\xi), p (\xi)$ are with unit Wronskian $Wr
(q (\xi), p (\xi)) = 1$ and related with
\begin{equation}
	\label{3.22} q (\xi + L) = q (\xi) + \theta p (\xi)
\end{equation}
fulfilled for every $\xi \in \mathbb{R}$. Evidently, $q (\xi)$ is
L-periodic if, and only if, $\theta = 0$. It is worth mentioning
that formulas for $q (\xi)$ and $\theta$ are given in
\cite{Neves2}. We slightly reformulate the result of Neves
(\cite{Neves2}, Theorem 3.1) to fit our purposes.
\begin{thm}
	\label{thm3.2} (Neves \cite{Neves2}) If $p (\xi)$ is the
	eigenfunction of $\mathcal{L}_c$ associated to the eigenvalue
	$\lambda = 0$ and $\theta$ is the constant from  (\ref{3.22}).
	Then $\lambda = 0$ is simple if and only if $\theta \neq 0$.
	Moreover, if $p (\xi)$ has $2n$ zeroes in the half-open interval
	$[0, L)$, then $\lambda = 0 = \lambda_{2n-1}$ if $\theta < 0$ and
	$\lambda = 0 = \lambda_{2n}$ if $\theta > 0$.
\end{thm}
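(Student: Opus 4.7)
The plan is to split Theorem \ref{thm3.2} into two assertions and prove them separately: (i) the simplicity criterion $\lambda=0$ simple iff $\theta\ne 0$, and (ii) the identification of $\lambda=0$ with $\lambda_{2n-1}$ or $\lambda_{2n}$ according to the sign of $\theta$. Assertion (i) is essentially immediate from linear ODE theory; the bulk of the argument lies in (ii), where the natural tool is the Floquet discriminant combined with oscillation theory.

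For (i), I would argue as follows. The eigenvalue $\lambda=0$ of $\mathcal{L}_c$ is double precisely when the space of $L$-periodic solutions of $\mathcal{L}_c y=0$ is two-dimensional. Since $\{p,q\}$ is a basis of the full two-dimensional solution space of the ODE and $p$ is $L$-periodic by hypothesis, this is equivalent to $q$ being $L$-periodic. The defining relation \eqref{3.22} gives $q(\xi+L)-q(\xi)=\theta p(\xi)$, so $q$ is $L$-periodic iff $\theta=0$ (using $p\not\equiv 0$). Hence $\lambda=0$ is simple iff $\theta\ne 0$.

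For (ii), the strategy is to relate the sign of $\theta$ to the sign of $\Delta'(0)$, where $\Delta(\lambda):=\operatorname{tr}M(\lambda)$ is the Floquet discriminant of $(\mathcal{L}_c-\lambda)y=0$, and then to feed this into standard Hill oscillation theory. Introduce $P(x,\lambda),Q(x,\lambda)$, the solutions of $(\mathcal{L}_c-\lambda)y=0$ with initial data $(P(0,\lambda),P'(0,\lambda))=(p(0),p'(0))$ and $(Q(0,\lambda),Q'(0,\lambda))=(q(0),q'(0))$, so that $P(\cdot,0)=p$ and $Q(\cdot,0)=q$. In the basis $(P,Q)$ the monodromy is encoded by
\begin{equation*}
P(x+L,\lambda)=\alpha(\lambda)P(x,\lambda)+\beta(\lambda)Q(x,\lambda),\qquad Q(x+L,\lambda)=\gamma(\lambda)P(x,\lambda)+\delta(\lambda)Q(x,\lambda),
\end{equation*}
so $\Delta(\lambda)=\alpha(\lambda)+\delta(\lambda)$ with $\alpha(0)=\delta(0)=1$, $\beta(0)=0$, $\gamma(0)=\theta$. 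Differentiating these identities in $\lambda$ and evaluating at $x=0$ produces two $2\times 2$ linear systems (invertible since $Wr(q,p)(0)=1$) that express $\alpha'(0),\beta'(0),\gamma'(0),\delta'(0)$ in terms of the boundary values $\dot P(L,0),\dot P'(L,0),\dot Q(L,0),\dot Q'(L,0)$. These in turn satisfy the inhomogeneous problem $\mathcal{L}_c\dot y=y|_{\lambda=0}$ with $\dot y(0)=\dot y'(0)=0$, which I would solve by variation of parameters against $\{p,q\}$. Using $p(L)=p(0)$, $q(L)=q(0)+\theta p(0)$, $q'(L)=q'(0)+\theta p'(0)$ and $Wr(q,p)=1$, the cross-integrals $\int_0^L pq$ and $\int_0^L q^2$ are expected to cancel, leaving the clean identity
\begin{equation*}
\Delta'(0) \;=\; c_0\,\theta\int_0^L p(x)^2\,dx,
\end{equation*}
with $c_0$ a constant of definite sign fixed by the Wronskian convention.

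It then remains to invoke the standard Floquet picture: the periodic gaps of $\mathcal{L}_c$ are precisely the intervals $(\lambda_{2n-1},\lambda_{2n})$, in which $\Delta(\lambda)>2$ while $|\Delta|\le 2$ on the adjoining bands. Consequently, if $\lambda=0$ is simple, then $\Delta$ is strictly increasing across the lower endpoint $\lambda=\lambda_{2n-1}$ and strictly decreasing across the upper endpoint $\lambda=\lambda_{2n}$, so $\operatorname{sgn}\Delta'(0)$ distinguishes the two cases. Reading this through the derivative formula above pins down the correspondence between the sign of $\theta$ and the index as claimed. The main technical obstacle I foresee is the algebraic bookkeeping in the variation-of-parameters step, namely confirming that the cross-integrals cancel so that $\Delta'(0)$ is a genuine scalar multiple of $\theta\int p^2$, and in particular fixing the sign of $c_0$ compatibly with the Wronskian convention of \eqref{3.22}, since the opposite convention would swap the assignment between $\lambda_{2n-1}$ and $\lambda_{2n}$ in the final statement.
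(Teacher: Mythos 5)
The paper does not prove Theorem~\ref{thm3.2}; it quotes it, ``slightly reformulated'' in the authors' own words, from Neves~\cite{Neves2}. Your blind proof is therefore a reconstruction of the argument in the cited source rather than of anything written in this paper. Your route through the Floquet discriminant is the standard one and, in outline, the correct one. Part (i) is exactly as you describe: $q$ is $L$-periodic iff $\theta=0$, and the periodic kernel is two-dimensional iff both $p$ and $q$ are periodic. For part (ii), the bookkeeping you are worried about does indeed close up: computing $\partial_\lambda$ of the monodromy in the basis $(P,Q)$ with $P(\cdot,0)=p$, $Q(\cdot,0)=q$, using variation of parameters with Wronskian $Wr(q,p)=qp'-q'p=1$ and the relations $p(\cdot+L)=p$, $q(\cdot+L)=q+\theta p$, the cross-integrals $\int_0^L pq$ cancel in the trace and one obtains the clean identity
\begin{equation*}
\Delta'(0)=\theta\int_0^L p(x)^2\,dx .
\end{equation*}
So your constant is $c_0=1$. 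A sanity check in the free case $\widehat{Q}\equiv 0$, $\lambda=0$, $p\equiv 1$, $q(x)=-x$, $\theta=-L$ gives $\Delta(\lambda)=2\cos(L\sqrt{\lambda})$ and $\Delta'(0^+)=-L^2=\theta\int_0^L p^2$, confirming both the formula and the sign.

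The ``main technical obstacle'' you flag at the end is real and should not be waved away. Under the conventions \eqref{3.22}--\eqref{3.24} the identity above yields $\mathrm{sgn}\,\Delta'(0)=\mathrm{sgn}\,\theta$, while the standard Floquet picture has $\Delta$ increasing through the value $2$ at the left gap endpoint $\lambda_{2n-1}$ and decreasing through $2$ at the right endpoint $\lambda_{2n}$; this assigns $\theta>0$ to $\lambda_{2n-1}$ and $\theta<0$ to $\lambda_{2n}$, which is the opposite of the dichotomy in the statement of Theorem~\ref{thm3.2}. The ground-state case $n=0$ already illustrates the mismatch: in the free example one has $\theta=-L<0$ at $\lambda_0=\lambda_{2n}$, whereas the theorem as worded would demand $\theta>0$ there. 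So before your argument is complete you must reconcile the Wronskian and $\theta$ conventions of \eqref{3.22}--\eqref{3.24} against Neves's original ones (a flipped Wronskian, or a Floquet relation $q(\xi+L)=q(\xi)-\theta p(\xi)$, would restore the stated dichotomy). As written, your plan produces a consistent and correct argument, but the final assignment of $\lambda_{2n-1}$ versus $\lambda_{2n}$ is convention-sensitive in precisely the way you suspected, and cannot be read off by plugging blindly into the paper's displayed normalizations.
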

In our case $p (\xi)$ has two  zeroes, then the operator
$\mathcal{L}_c$ would have only one negative eigenvalue if we show
that $\theta < 0$ for some fixed $c$. The first eigenvalue is
always simple, by the above result the eigenvalue $\lambda = 0$ is
also simple, and therefore since $\mathcal{L}_c$ is isoinertial
from Theorem \ref{thm3.1}, then $in (\mathcal{L}_c) = (1, 1)$ for
all $c > 4 \pi^2 /L^2$.
On the other hand, if we show that $\theta > 0$ for some fixed $c$, by above argument
$in (\mathcal{L}_c) = (2, 1)$ for all $c > 4 \pi^2 /L^2$.

To determine the sign of $\theta$ we proceed as follows. First, we
have to find the unique solution $q (\xi)$ (linearly independent
of $p (\xi)$) of the initial value problem
\begin{eqnarray}
	\label{3.23}
	& - q '' & + \left(c - \frac{3}{2 c} \psi ^2 (\xi)\right) q = 0, \nonumber \\
	& q (0) & = \frac{1}{p' (0)}, \\
	& q' (0) & = 0 \nonumber
\end{eqnarray}
Then,  after differentiating (\ref{3.22}) and putting $\xi =0$ we
get for $\theta$
\begin{equation}
	\label{3.24}
	\theta = \frac{q' (L)}{p' (0)},
\end{equation}
where $L > 0$ is arbitrary, but fixed period of $\psi$.

We fix $L > 0$ and $\kappa \in (0, 1)$. In the end of previous
section we have established a bijective correspondence between
$\kappa$ and $c$. Through (\ref{2.27}) we find the value of $c$,
corresponding to these $L$ and $\kappa$. 
Then the IVP (\ref{3.23}) is solved numerically with Mathematica
package to obtain $q' (L)$.

In Table \ref{table:1} we collect some values of the constant
$\theta$ for different choices of $L$ and $\kappa$. In all choices
$\theta$ turns out to be positive, which implies that $\mathcal{L}_c$ has two
negative eigenvalues and the eigenvalue $\lambda = 0$ is
simple for the corresponding values of $c$.

\begin{table}[h!]
	\centering
	\begin{tabular}{ | c| c | c | c | c | c|}
		\hline
		L & $\kappa$ & c & $p' (0)$  & $q' (L)$ & $\theta$ \\
		\hline
		2   & 0.1 & 9.87007 & 1.57475 & 0.00205921 & 0.00130764 \\
		2   & 0.2 & 9.87731 & 1.58687 & 0.03585840 & 0.022597   \\
		2   & 0.3 & 9.91068 & 1.60805 & 0.210449   & 0.130873   \\
		3   & 0.1 & 4.3867  & 1.04983 & 0.00205916 & 0.00196142 \\
		3   & 0.2 & 4.38992 & 1.05791 & 0.0358583  & 0.0338954 \\
		3   & 0.3 & 4.40475 & 1.07203 & 0.210449   & 0.196309 \\
		4   & 0.1 & 2.46752 & 0.787373& 0.00205919 & 0.00261527 \\
		4   & 0.2 & 2.46933 & 0.793434 & 0.0358584 & 0.0451939 \\
		4   & 0.3 & 2.47767 & 0.804024 & 0.210449  & 0.261745 \\
		4   & 0.5 & 2.56152 & 0.842875 & 2.77357   & 3.29061 \\
		4   & 0.7 & 2.95039 & 0.92284  & 30.2488   & 32.7777 \\
		10  & 0.1 & 0.394803& 0.314949 & 0.0205909 & 0.00653786 \\
		10  & 0.2 & 0.395092& 0.317374 & 0.0358584 & 0.112985 \\
		10  & 0.4 & 0.400374& 0.328    & 0.830212  & 2.25113 \\
		50  & 0.1 & 0.0158037& 0.0634747 & 0.0358582 & 0.564921\\
		\hline
	\end{tabular}
	
	\vspace{0.3cm}
	
	\caption{Values of $\theta$ related to the period $L$ and the
		modulus $\kappa$.} \label{table:1}
\end{table}
To summarize, we have calculated for particular values of $c$ that
the constant $\theta$ is positive. By Theorem \ref{thm3.2} we
conclude that $\mathcal{L}_c$ has two negative
eigenvalues and $\lambda = 0$ is also simple eigenvalue. Since
$\mathcal{L}_c$ is isoinertial by Theorem \ref{thm3.1} we deduce
that $in (\mathcal{L}_c) = (2, 1)$ for all $c > 4 \pi^2 /L^2$. The
reminder of the spectrum is discrete and bounded away from zero.

\end{document}